\definecolor{deepjunglegreen}{rgb}{0.0, 0.29, 0.29}
\definecolor{darkspringgreen}{rgb}{0.09, 0.45, 0.27}
\definecolor{Red}{rgb}{0.7, 0,0}
\pretocmd\section{\Needspace*{4\baselineskip}}{}{}
\newtheorem{thm}{Theorem}[subsection]
\newtheorem{cor}[thm]{Corollary}
\newtheorem{lem}[thm]{Lemma}
\newtheorem{prop}[thm]{Proposition}
\newtheorem{conj}[thm]{Conjecture}
\theoremstyle{definition}
\newtheorem{defn}[thm]{Definition}
\theoremstyle{remark}
\newtheorem{rem}[thm]{Remark}
\newcommand{\nc}{\newcommand}
\nc{\renc}{\renewcommand} \nc{\ssec}{\subsection}
\nc{\sssec}{\subsubsection}
\nc{\on}{\operatorname} \nc{\wh}{\widehat}
\nc\ol{\overline} \nc\ul{\underline} \nc\wt{\widetilde}
\newcommand{\red}[1]{{\color{Red}#1}}
\nc{\BA}{{\mathbb{A}}} \nc{\BC}{{\mathbb{C}}} \nc{\BF}{{\mathbb{F}}}
\nc{\BD}{{\mathbb{D}}} \nc{\BG}{{\mathbb{G}}} \nc{\BQ}{{\mathbb{Q}}}
\nc{\BM}{{\mathbb{M}}} \nc{\BN}{{\mathbb{N}}} \nc{\BO}{{\mathbb{O}}}
\nc{\BP}{{\mathbb{P}}} \nc{\BR}{{\mathbb{R}}}
\nc{\BZ}{{\mathbb{Z}}} \nc{\BS}{{\mathbb{S}}} \nc{\BW}{{\mathbb{W}}}
\nc{\CA}{{\mathcal{A}}} \nc{\CB}{{\mathcal{B}}} \nc{\CalC}{{\mathcal{C}}} \nc{\CalD}{{\mathcal{D}}}
\nc{\CE}{{\mathcal{E}}} \nc{\CF}{{\mathcal{F}}}
\nc{\cF}{{\mathcal{F}}}
\nc{\CG}{{\mathcal{G}}} \nc{\CH}{{\mathcal{H}}}
\nc{\CI}{{\mathcal{I}}} \nc{\CK}{{\mathcal{K}}} \nc{\CL}{{\mathcal{L}}}
\nc{\CM}{{\mathcal{M}}} \nc{\CN}{{\mathcal{N}}}
\nc{\CO}{{\mathcal{O}}} \nc{\CP}{{\mathcal{P}}}
\nc{\cP}{{\mathcal{P}}}
\nc{\CQ}{{\mathcal{Q}}} \nc{\CR}{{\mathcal{R}}}
\nc{\CS}{{\mathcal{S}}} \nc{\CT}{{\mathcal{T}}}
\nc{\CU}{{\mathcal{U}}} \nc{\CV}{{\mathcal{V}}}  
\nc{\cV}{{\mathcal{V}}}  
\nc{\CY}{{\mathcal Y}}
\nc{\CW}{{\mathcal{W}}} \nc{\CZ}{{\mathcal{Z}}}
\nc{\scrL}{{\mathscr L}} \nc{\scrM}{{\mathscr M}} \nc{\scrQ}{{\mathscr Q}}
\nc{\scrX}{{\mathscr X}} \nc{\scrY}{{\mathscr Y}} \nc{\scrZ}{{\mathscr Z}}
\nc{\cM}{{\check{\mathcal M}}{}} \nc{\csM}{{\check{\mathcal A}}{}}
\nc{\oM}{{\overset{\circ}{\mathcal M}}{}}
\nc{\obM}{{\overset{\circ}{\mathbf M}}{}}
\nc{\oCA}{{\overset{\circ}{\mathcal A}}{}}
\nc{\obA}{{\overset{\circ}{\mathbf A}}{}}
\nc{\ooM}{{\overset{\circ}{M}}{}}
\nc{\osM}{{\overset{\circ}{\mathsf M}}{}}
\nc{\vM}{{\overset{\bullet}{\mathcal M}}{}}
\nc{\nM}{{\underset{\bullet}{\mathcal M}}{}}
\nc{\oD}{{\overset{\circ}{\mathcal D}}{}}
\nc{\obD}{{\overset{\circ}{\mathbf D}}{}}
\nc{\oA}{{\overset{\circ}{\mathbb A}}{}}
\nc{\op}{{\overset{\bullet}{\mathbf p}}{}}
\nc{\cp}{{\overset{\circ}{\mathbf p}}{}}
\nc{\oU}{{\overset{\bullet}{\mathcal U}}{}}
\nc{\ofZ}{{\overset{\circ}{\mathfrak Z}}{}}
\nc{\fa}{{\mathfrak{a}}} \nc{\fb}{{\mathfrak{b}}}
\nc{\fd}{{\mathfrak{d}}} \nc{\fe}{{\mathfrak{e}}} \nc{\ff}{{\mathfrak{f}}}
\nc{\fg}{{\mathfrak{g}}} \nc{\fgl}{{\mathfrak{gl}}}
\nc{\fh}{{\mathfrak{h}}} \nc{\fri}{{\mathfrak{i}}}
\nc{\fj}{{\mathfrak{j}}} \nc{\fk}{{\mathfrak{k}}} \nc{\fl}{{\mathfrak{l}}}
\nc{\fm}{{\mathfrak{m}}} \nc{\fn}{{\mathfrak{n}}}
\nc{\ft}{{\mathfrak{t}}} \nc{\fu}{{\mathfrak{u}}} \nc{\fv}{{\mathfrak{v}}}
\nc{\fw}{{\mathfrak{w}}} \nc{\fz}{{\mathfrak{z}}}
\nc{\fp}{{\mathfrak{p}}} \nc{\fq}{{\mathfrak{q}}} \nc{\frr}{{\mathfrak{r}}}
\nc{\fs}{{\mathfrak{s}}} \nc{\fx}{{\mathfrak{x}}} 
\nc{\fsl}{{\mathfrak{sl}}}
\nc{\fso}{{\mathfrak{so}}} \nc{\fsp}{{\mathfrak{sp}}} \nc{\osp}{{\mathfrak{osp}}}
\nc{\hsl}{{\widehat{\mathfrak{sl}}}}
\nc{\hgl}{{\widehat{\mathfrak{gl}}}}
\nc{\hg}{{\widehat{\mathfrak{g}}}}
\nc{\chg}{{\widehat{\mathfrak{g}}}{}^\vee}
\nc{\hn}{{\widehat{\mathfrak{n}}}}
\nc{\chn}{{\widehat{\mathfrak{n}}}{}^\vee}
\nc{\fA}{{\mathfrak{A}}} \nc{\fB}{{\mathfrak{B}}} \nc{\fC}{{\mathfrak{C}}}
\nc{\fD}{{\mathfrak{D}}} \nc{\fE}{{\mathfrak{E}}}
\nc{\fF}{{\mathfrak{F}}} \nc{\fG}{{\mathfrak{G}}} \nc{\fH}{{\mathfrak{H}}}
\nc{\fI}{{\mathfrak{I}}} \nc{\fJ}{{\mathfrak{J}}}
\nc{\fK}{{\mathfrak{K}}} \nc{\fL}{{\mathfrak{L}}}
\nc{\fM}{{\mathfrak{M}}} \nc{\fN}{{\mathfrak{N}}}
\nc{\frP}{{\mathfrak{P}}} \nc{\fQ}{{\mathfrak{Q}}} \nc{\fR}{{\mathfrak{R}}}
\nc{\fS}{{\mathfrak{S}}} \nc{\fT}{{\mathfrak{T}}} \nc{\fU}{{\mathfrak{U}}}
\nc{\fV}{{\mathfrak{V}}} \nc{\fW}{{\mathfrak{W}}}
\nc{\fX}{{\mathfrak{X}}} \nc{\fY}{{\mathfrak{Y}}}
\nc{\fZ}{{\mathfrak{Z}}}
\nc{\ba}{{\mathbf{a}}}
\nc{\bb}{{\mathbf{b}}} \nc{\bc}{{\mathbf{c}}} \nc{\be}{{\mathbf{e}}}
\nc{\bg}{{\mathbf{g}}} \nc{\bj}{{\mathbf{j}}} \nc{\bm}{{\mathbf{m}}}
\nc{\bn}{{\mathbf{n}}} \nc{\bp}{{\mathbf{p}}}
\nc{\bq}{{\mathbf{q}}} \nc{\br}{{\mathbf{r}}} \nc{\bt}{{\mathbf{t}}}
\nc{\bfu}{{\mathbf{u}}} \nc{\bv}{{\mathbf{v}}}
\nc{\bx}{{\mathbf{x}}} \nc{\by}{{\mathbf{y}}} \nc{\bz}{{\mathbf{z}}}
\nc{\bw}{{\mathbf{w}}} \nc{\bA}{{\mathbf{A}}}
\nc{\bB}{{\mathbf{B}}} \nc{\bC}{{\mathbf{C}}}
\nc{\bD}{{\mathbf{D}}} \nc{\bF}{{\mathbf{F}}} \nc{\bG}{{\mathbf{G}}}
\nc{\bH}{{\mathbf{H}}} \nc{\bI}{{\mathbf{I}}} \nc{\bJ}{{\mathbf{J}}}
\nc{\bK}{{\mathbf{K}}} \nc{\bM}{{\mathbf{M}}} \nc{\bN}{{\mathbf{N}}}
\nc{\bO}{{\mathbf{O}}} \nc{\bS}{{\mathbf{S}}} \nc{\bT}{{\mathbf{T}}}
\nc{\bU}{{\mathbf{U}}} \nc{\bV}{{\mathbf{V}}} \nc{\bW}{{\mathbf{W}}}
\nc{\bX}{{\mathbf{X}}}
\nc{\bY}{{\mathbf{Y}}} \nc{\bP}{{\mathbf{P}}}
\nc{\bZ}{{\mathbf{Z}}} \nc{\bh}{{\mathbf{h}}}
\nc{\sA}{{\mathsf{A}}} \nc{\sB}{{\mathsf{B}}}
\nc{\sC}{{\mathsf{C}}} \nc{\sD}{{\mathsf{D}}}
\nc{\sE}{{\mathsf{E}}} \nc{\sF}{{\mathsf{F}}} \nc{\sG}{{\mathsf{G}}} \nc{\sH}{{\mathsf{H}}}
\nc{\sI}{{\mathsf{I}}} \nc{\sK}{{\mathsf{K}}} \nc{\sL}{{\mathsf{L}}}
\nc{\sfm}{{\mathsf{m}}} \nc{\sM}{{\mathsf{M}}} \nc{\sN}{{\mathsf{N}}}
\nc{\sO}{{\mathsf{O}}} \nc{\sQ}{{\mathsf{Q}}} \nc{\sP}{{\mathsf{P}}}
\nc{\sT}{{\mathsf{T}}} \nc{\sZ}{{\mathsf{Z}}}
\nc{\sV}{{\mathsf{V}}} \nc{\sW}{{\mathsf{W}}} \nc{\sn}{{\mathsf{n}}}
\nc{\sfp}{{\mathsf{p}}} \nc{\sq}{{\mathsf{q}}} \nc{\sr}{{\mathsf{r}}}
\nc{\sfs}{{\mathsf{s}}} \nc{\st}{{\mathsf{t}}} \nc{\sfb}{{\mathsf{b}}}
\nc{\sfc}{{\mathsf{c}}} \nc{\sd}{{\mathsf{d}}} \nc{\sg}{{\mathsf{g}}}
\nc{\sz}{{\mathsf{z}}}
\nc{\tA}{{\widetilde{\mathbf{A}}}}
\nc{\tB}{{\widetilde{\mathcal{B}}}}
\nc{\tg}{{\widetilde{\mathfrak{g}}}} \nc{\tG}{{\widetilde{G}}}
\nc{\TM}{{\widetilde{\mathbb{M}}}{}}
\nc{\tO}{{\widetilde{\mathsf{O}}}{}}
\nc{\tU}{{\widetilde{\mathfrak{U}}}{}} \nc{\TZ}{{\tilde{Z}}}
\nc{\tx}{{\tilde{x}}} \nc{\tbv}{{\tilde{\bv}}}
\nc{\tfP}{{\widetilde{\mathfrak{P}}}{}} \nc{\tz}{{\tilde{\zeta}}}
\nc{\tmu}{{\tilde{\mu}}}
\nc{\urho}{\underline{\rho}} \nc{\uB}{\underline{B}}
\nc{\uC}{{\underline{\mathbb{C}}}} \nc{\ui}{\underline{i}}
\nc{\uj}{\underline{j}} \nc{\ofP}{{\overline{\mathfrak{P}}}}
\nc{\oB}{{\overline{\mathcal{B}}}}
\nc{\og}{{\overline{\mathfrak{g}}}} \nc{\oI}{{\overline{I}}}
\nc{\eps}{\varepsilon} \nc{\hrho}{{\hat{\rho}}} \nc{\balpha}{{\boldsymbol{\alpha}}}
\nc{\blambda}{{\boldsymbol{\lambda}}} \nc{\bmu}{{\boldsymbol{\mu}}} \nc{\bnu}{{\boldsymbol{\nu}}}
\nc{\btheta}{{\boldsymbol{\theta}}} \nc{\bzeta}{{\boldsymbol{\zeta}}} \nc{\bta}{{\boldsymbol{\eta}}}
\nc{\bbeta}{{\boldsymbol{\beta}}} \nc{\bkappa}{{\boldsymbol{\kappa}}} \nc{\bomega}{{\boldsymbol{\omega}}}
\nc{\one}{{\mathbf{1}}} \nc{\two}{{\mathbf{t}}}
\DeclareMathOperator
\DMO\Sym{Sym}
\nc{\Tot}{{\mathop{\operatorname{\rm Tot}}}}
\nc{\Spec}{\mathop{\operatorname{\rm Spec}}}
\nc{\Ker}{\mathop{\operatorname{\rm Ker}}}
\nc{\Isom}{{\mathop{\operatorname{\rm Isom}}}}
\nc{\Hilb}{{\mathop{\operatorname{\rm Hilb}}}}
\nc{\deeq}{{\mathop{\operatorname{\rm deeq}}}}
\nc{\End}{{\mathop{\operatorname{\rm End}}}}
\nc{\Ran}{{\mathop{\operatorname{\rm Ran}}}}
\nc{\Ext}{{\mathop{\operatorname{\rm Ext}}}}
\nc{\Hom}{{\mathop{\operatorname{\rm Hom}}}}
\nc{\CHom}{{\mathop{\operatorname{{\mathcal{H}}\it om}}}}
\nc{\GL}{{\mathop{\operatorname{\rm GL}}}}
\nc{\PGL}{{\mathop{\operatorname{\rm PGL}}}}
\nc{\SL}{{\mathop{\operatorname{\rm SL}}}}
\nc{\SO}{{\mathop{\operatorname{\rm SO}}}}
\nc{\Sp}{{\mathop{\operatorname{\rm Sp}}}}
\nc{\PSp}{{\mathop{\operatorname{\rm PSp}}}}
\nc{\GSp}{{\mathop{\operatorname{\rm GSp}}}}
\nc{\Spin}{{\mathop{\operatorname{\rm Spin}}}}
\nc{\OSp}{{\mathop{\operatorname{\rm SOSp}}}}
\nc{\gr}{{\mathop{\operatorname{\rm gr}}}}
\nc{\Id}{{\mathop{\operatorname{\rm Id}}}}
\nc{\perf}{{\mathop{\operatorname{\rm perf}}}}
\nc{\defi}{{\mathop{\operatorname{\rm def}}}}
\nc{\length}{{\mathop{\operatorname{\rm length}}}}
\nc{\supp}{{\mathop{\operatorname{\rm supp}}}}
\nc{\HC}{{\mathcal H}{\mathcal C}}
\nc{\pr}{{\operatorname{pr}}}
\nc{\Cliff}{{\mathsf{Cliff}}}
\nc{\loc}{{\operatorname{loc}}} \nc{\lc}{{\operatorname{lc}}}
\nc{\Fl}{{\mathbf{Fl}}} \nc{\Ffl}{{\mathcal{F}\ell}}
\nc{\Fib}{{\mathsf{Fib}}}
\nc{\Coh}{{\mathsf{Coh}}} \nc{\FCoh}{{\mathsf{FCoh}}}
\nc{\Perf}{{\mathsf{Perf}}}
\nc{\wtimes}{\mathbin{\widetilde\times}}
\nc{\reg}{{\text{\rm reg}}} \nc{\ren}{{\text{\rm ren}}}
\nc{\self}{{\text{\rm self}}}
\nc{\gvee}{{\mathfrak g}^{\!\scriptscriptstyle\vee}}
\nc{\tvee}{{\mathfrak t}^{\!\scriptscriptstyle\vee}}
\nc{\nvee}{{\mathfrak n}^{\!\scriptscriptstyle\vee}}
\nc{\bvee}{{\mathfrak b}^{\!\scriptscriptstyle\vee}}
       \nc{\rhovee}{\rho^{\!\scriptscriptstyle\vee}}
\nc{\cplus}{{\mathbf{C}_+}} \nc{\cminus}{{\mathbf{C}_-}}
\nc{\cthree}{{\mathbf{C}_*}} \nc{\Qbar}{{\bar{Q}}}
\newcommand\iso{\mathbin{\vphantom{j^{X^2}}\smash{\overset{\sim}{\vphantom{\rule{0pt}{0.20em}}\smash{\longrightarrow}}}}}
\nc{\Gtimes}{\vphantom{j^{X^2}}\smash{\overset{G}{\vphantom{\rule{0pt}{0.30em}}\smash{\times}}}}
\nc{\sGtimes}{\vphantom{j^{X^2}}\smash{\overset{\mathsf G}{\vphantom{\rule{0pt}{0.30em}}\smash{\times}}}}
\nc{\svee}{{\!\scriptscriptstyle\vee}}
\nc{\bOmega}{{\overline{\Omega}}}
\nc{\seq}[1]{\stackrel{#1}{\sim}}
\nc{\aff}{{\operatorname{aff}}}
\nc{\fin}{{\operatorname{fin}}}
\nc{\mir}{{\operatorname{mir}}}
\nc{\triv}{{\operatorname{triv}}}
\nc{\ext}{{\operatorname{ext}}}
\nc{\righ}{{\operatorname{right}}}
\nc{\lef}{{\operatorname{left}}}
\nc{\forg}{{\operatorname{forg}}}
\nc{\fid}{{\operatorname{fd}}}
\nc{\odd}{{\operatorname{odd}}}
\nc{\even}{{\operatorname{even}}}
\nc{\modu}{{\operatorname{-mod}}}
\nc{\Gr}{{\operatorname{Gr}}}
\nc{\FT}{{\operatorname{FT}}}
\nc{\Mat}{{\operatorname{Mat}}}
\nc{\MSt}{{\operatorname{MSt}}}
\nc{\sph}{{\operatorname{sph}}}
\nc{\GR}{{\mathbf{Gr}}}
\nc{\Perv}{{\operatorname{Perv}}}
\nc{\Rep}{{\operatorname{Rep}}}
\nc{\Ind}{{\operatorname{Ind}}}
\nc{\IC}{{\operatorname{IC}}}
\nc{\Bun}{{\operatorname{Bun}}}
\nc{\Proj}{{\operatorname{Proj}}}
\nc{\Stab}{{\operatorname{Stab}}}
\nc{\pt}{{\operatorname{pt}}}
\nc{\bfmu}{{\boldsymbol{\mu}}}
\nc{\bfomega}{{\boldsymbol{\omega}}}
\nc{\calM}{\mathcal M}
\nc{\calA}{\mathcal A}
\nc{\calO}{\mathcal O}
\nc{\CC}{\mathcal C}
\nc{\cC}{\mathcal C}
\nc{\calN}{\mathcal N}
\nc{\grg}{\mathfrak g}
\nc{\dslash}{/\!\!/}
\nc{\tslash}{/\!\!/\!\!/}
\nc\grt{\mathfrak t}
\nc\bfM{\mathbf M}
\nc\bfN{\mathbf N}
\nc\Sig{\Sigma}
\nc\ZZ{\mathbb{Z}}
\nc\calC{\mathcal C}
\nc\calF{\mathcal F}
\nc\calX{\mathcal X}
\nc\calY{\mathcal Y}
\nc\QCoh{\operatorname{QCoh}}
\nc\Sch{\operatorname{Sch}}
\nc\IndCoh{\operatorname{IndCoh}}
\nc\Maps{\operatorname{Maps}}
\nc\Dmod{D-\operatorname{mod}}
\newcommand\Hecke{\operatorname{Hecke}}
\nc{\calD}{\mathcal D}
\nc\bfO{\mathbf O}
\nc\GG{\mathbb G}
\nc\calK{\mathcal K}
\nc{\calG}{\mathcal G}
\nc\RHom{\operatorname{RHom}}
\nc\Res{\operatorname{Res}}
\nc\Av{\operatorname{Av}}
\nc{\RH}{{\operatorname{RH}}}
\nc{\RT}{{\operatorname{RT}}}
\nc{\DR}{{\operatorname{DR}}}
\nc\grs{\mathfrak s}
\nc{\tilX}{\widetilde X}
\nc\calB{\mathcal B}
\nc\calS{\mathcal S}
\nc\calT{\mathcal T}
\nc\calZ{\mathcal Z}
\nc\LS{\operatorname{LocSys}}
\nc\bfL{\on{\mathbf L}}
\newcommand*\circled[1]
\newcommand{\raisemath}[1]{\mathpalette{\raisem@th{#1}}}
\newcommand{\raisem@th}[3]{\raisebox{#1}{$#2#3$}}
\nc{\binlim}[2][]{\def\@tempa{#1}\@ifnextchar^{\@binlim{#2}}{\@binlim{#2}^{}}}
\def\@binlim#1^#2{\mathbin{\@ifempty{#2}{\mathop{#1}}{\mathop{#1}\@xp\displaylimits\@tempa^{#2}}}}
\nc\cX{{\mathcal X}}
\nc\Gm{{\mathbb G_m}}
\renc\Hecke{\mathit{\CH\kern-.2ex ecke}}
\nc\Fq{\mathbb F_q}
\nc\bGO{{\bG_\bO}}
\nc\opp{{\on{op}}}
\nc\tbx{\binlim{\widetilde\boxtimes{}}}
\nc\phitau{\varphi\tau}
\newenvironment{i-ii-iii}{%
\begin{enumerate}
}%
{\end{enumerate}}
\nc\ceil[1]{\lceil#1\rceil}  \nc\floor[1]{\lfloor#1\rfloor}
\nc\Lie{\on{Lie}}
\nc\sS{{\mathsf S}}
\nc\vvv{\ensuremath{\red\surd}}
\def\arxiv#1{\href{http://arxiv.org/abs/#1}{\tt arXiv:#1}} 
\nc\kap{\kappa}
\nc\gra{\mathfrak a}
\nc\diag{\mathrm{diag}}
\nc\gl{\mathfrak{gl}}
\nc\sTr{\operatorname{sTr}}
\nc\hatG{\widehat{G}}
\nc\calL{\mathcal L}
\nc\Whit{\operatorname{Whit}}
\nc\KL{\operatorname{KL}}
\renewcommand{\subsection}{\@startsection{subsection}{2}{0pt}{-3ex
plus -1ex minus -0.2ex}{-2mm plus -0pt minus
-2pt}{\normalfont\bfseries}} \makeatother
\numberwithin{equation}{subsection}
\nc\mto{\mapsto }
\nc\en{\enspace }
\begin{document}

\author[M.Finkelberg]{Michael Finkelberg}
\address{Einstein Institute of Mathematics, The Hebrew University of Jerusalem,
  Edmond J. Safra Campus, Giv’at Ram, Jerusalem, 91904, Israel;
\newline  National Research University Higher School of Economics;
\newline Skolkovo Institute of Science and Technology}
\email{fnklberg@gmail.com}

\author[V.Ginzburg]{Victor Ginzburg}
\address{Department of Mathematics, University of Chicago, Chicago, USA}
\email{vityaginzburg@gmail.com}

\author[R.Travkin]{Roman Travkin}
\address{Skolkovo Institute of Science and Technology, Moscow, Russia}
\email{roman.travkin2012@gmail.com}

\title
{Lagrangian subvarieties of hyperspherical varieties}
\dedicatory{To our friend Roman Bezrukavnikov on his 50th birthday}



\begin{abstract}
  Given a hyperspherical $G$-variety $\scrX$ we consider the zero moment level $\Lambda_\scrX\subset\scrX$
  of the action of a Borel subgroup $B\subset G$. We conjecture that $\Lambda_\scrX$ is Lagrangian.
  For the dual $G^\vee$-variety $\scrX^\vee$, we conjecture that that there is a bijection between
  the sets of irreducible components $\on{Irr}\Lambda_\scrX$ and $\on{Irr}\Lambda_{\scrX^\vee}$. We check
  this conjecture for all the hyperspherical equivariant slices, and for all the basic classical
  Lie superalgebras.
\end{abstract}

\maketitle

\tableofcontents

\section{Introduction}

\subsection{Dual hyperspherical varieties}
Recall that a complex algebraic symplectic variety $\scrX$ equipped with a hamiltonian action of
a complex reductive group $G$ is called {\em coisotropic} if general $G$-orbits are coisotropic;
equivalently if the field of $G$-invariant rational functions $\BC(\scrX)^G$ is Poisson-commutative,
see e.g.~\cite{v}. The main source of coisotropic $G$-varieties consists of cotangent bundles of
smooth spherical $G$-varieties.

D.~Ben-Zvi, Y.~Sakellaridis and A.~Venkatesh specify in~\cite[\S3.5.1]{bzsv} a subclass of
{\em hyperspherical} coisotropic $G$-varieties. Under certain conditions~\cite[\S4]{bzsv},
for a hyperspherical $G$-variety $\scrX$, the authors of {\em loc.cit.} define the dual hyperspherical $G^\vee$-variety $\scrX^\vee$.
Here $G^\vee$ stands for the Langlands dual group of $G$. In~\cite[Conjecture 7.5.1]{bzsv}, the authors propose an equivalence of
the category of $G[\![t]\!]$-equivariant modules over a quantization of the loop space of $\scrX$ with
the category of $G^\vee$-equivariant coherent sheaves over a {\em shearing} of $\scrX^\vee$.

We propose a much more elementary relation between $G\circlearrowright\scrX$ and
$G^\vee\circlearrowright\scrX^\vee$. Specifically, we choose a Borel subgroup $B\subset G$ with Lie algebra
$\fb\subset\fg$, and let $\fb^\perp\subset\fg^*$ be its annihilator. We denote by
$\bmu\colon\scrX\to\fg^*$ the moment map, and we denote by $\Lambda_\scrX$ the preimage
$\bmu^{-1}(\fb^\perp)$. More invariantly, we consider the flag variety $\CB=G/B$ with cotangent bundle
$T^*\CB$, and we denote by $\widetilde{\Lambda}_\scrX$ the zero fiber of the moment map
$\tilde\bmu\colon\scrX\times T^*\CB\to\fg^*$ of the diagonal $G$-action.

More generally, for $\lambda\in\fg^*$ such that $\bar\lambda:=\lambda|_\fb$
is a character of $\fb$, we consider the subvariety
$\bmu^{-1}(\lambda+\fb^\perp)$. According to~Proposition~\ref{prop equiv} below,
$\scrX$ is coisotropic iff for any such $\lambda$ in a nonempty open $G$-invariant
subvariety $Z\subset\bmu(\scrX)$, the intersection
$\bmu^{-1}(\lambda+\fb^\perp)\cap\bmu^{-1}(Z)$ is Lagrangian in $\scrX$.
The first part of the following conjecture may be viewed as a limit 
$\lambda\to0$ version of~Proposition~\ref{prop equiv}.

\begin{conj}
  \label{weak}
  \textup{(a)} For a hyperspherical $G$-variety $\scrX$, each irreducible component
  of the subvariety $\Lambda_\scrX\subset\scrX$
  is Lagrangian. Equivalently, $\widetilde{\Lambda}_\scrX\subset\scrX\times T^*\CB$ is Lagrangian.

  \textup{(b)} The sets of irreducible components of Lagrangian subvarieties
  $\on{Irr}\Lambda_\scrX=\on{Irr}\widetilde{\Lambda}_\scrX$ and
  $\on{Irr}\Lambda_{\scrX^\vee}=\on{Irr}\widetilde{\Lambda}_{\scrX^\vee}$ have the same cardinality.
\end{conj}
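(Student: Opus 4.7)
The plan is to attack (a) by the ``isotropic $+$ right dimension'' dichotomy, and (b) by the categorical BZSV duality (failing that, case-by-case). For (a), at a smooth point $x\in\Lambda_\scrX$ a direct moment-map computation gives
\[
T_x\Lambda_\scrX \;=\; d\bmu_x^{-1}(\fb^\perp) \;=\; \bigl(T_x(B\cdot x)\bigr)^\omega,
\]
the symplectic perpendicular of the $B$-orbit through $x$. Consequently $\Lambda_\scrX$ is Lagrangian at $x$ if and only if $T_x(B\cdot x)$ is Lagrangian, i.e.\ precisely when generic $B$-orbits on $\scrX$ are Lagrangian subvarieties. The task is then to extract this from the hyperspherical axioms: coisotropy of generic $G$-orbits is built into the definition, and the additional conical $\mathbb G_m$-action and the polarization/flatness condition on $\bmu$ should promote coisotropy of $G$-orbits to the Lagrangian property for $B$-orbits. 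In the guiding example $\scrX=T^*Y$ with $Y$ spherical, the $B$-orbit through $(y,0)$ for $y$ in the open $B$-orbit of $Y$ is an open piece of the zero section, hence Lagrangian; the general case should follow from this local model by propagation via the conical $\mathbb G_m$-structure.

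For part (b), my first attempt would be a categorical bijection: the irreducible components of $\Lambda_\scrX$ ought to index simple objects in a natural $B$-equivariant (Whittaker-type) category on $\scrX$, in the Mirkovi\'c--Vilonen / Springer paradigm, so that the categorical BZSV duality \cite[Conjecture~7.5.1]{bzsv} matches them with the simples attached to $\Lambda_{\scrX^\vee}$. Absent that equivalence, the concrete route is case-by-case verification: on hyperspherical equivariant slices one can use explicit convolution-algebra descriptions of the components, and for the varieties attached to basic classical Lie superalgebras one has combinatorial models of the components on both sides that can be matched directly.

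The main obstacle is a uniform conceptual proof of (b): a numerical matching across Langlands duality typically requires either the full (conjectural) categorical equivalence or a bespoke combinatorial bijection tailored to each family. Part (a) should be comparatively tractable once the precise hyperspherical hypotheses are inventoried, but (b) will likely require substantive new input beyond what those hypotheses directly provide, which is why the realistic strategy is case-by-case verification on the families listed above.
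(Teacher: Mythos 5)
What you are attacking is stated in the paper as a \emph{conjecture}, not a theorem: the paper does not prove it in the generality you aim for, but only verifies it for the hyperspherical equivariant slices of Table~\ref{tab} and their linear duals. So any proposal should be judged against what is actually proved, namely the case-by-case checks.

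For part (a), the paper's mechanism is that every $\scrX$ and $\scrX^\vee$ in those families is \emph{pseudospherical}, i.e.\ a (twisted) cotangent bundle $T^*_\psi\scrY$ with finitely many relevant $B$-orbits, so $\Lambda_\scrX$ is a finite union of twisted conormal bundles and is Lagrangian for free (this is the content of Remark~\ref{super} on the linear side and of the explicit polarizations $\scrX=T^*_\psi\scrY$ on the slice side). Your proposed general argument has a genuine gap and an error. The identity $T_x\Lambda_\scrX=d\bmu_x^{-1}(\fb^\perp)$ holds only when $\bmu$ is transverse to $\fb^\perp$ at $x$; in general one only has $T_x\Lambda_\scrX\subset (T_x(B\cdot x))^\omega$, and on the other hand $T_x(B\cdot x)\subset T_x\Lambda_\scrX$ because $\fb^\perp$ is $B$-stable, so $B\cdot x\subset\Lambda_\scrX$. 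Consequently the condition you want is that $B$-orbits \emph{through points of} $\Lambda_\scrX$ be Lagrangian, not that ``generic $B$-orbits on $\scrX$'' be Lagrangian --- the latter is false already for $\scrX=T^*G$ with the $G\times G$-action, where a generic $B\times B$-orbit has dimension $\dim G+\on{rank}G>\tfrac12\dim\scrX$. Finally, your check in the model $\scrX=T^*Y$ only treats the component supported on the zero section (the conormal to the open $B$-orbit); the remaining components are conormals to the smaller $B$-orbits, and the ``propagation via the conical $\mathbb G_m$-structure'' is exactly the step that would need an actual argument.

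For part (b) your fallback --- case-by-case verification --- is indeed what the paper does, and the categorical route you sketch is precisely the paper's Conjecture~\ref{strong}, which is left open. But the actual matching device is combinatorial rather than a convolution-algebra computation: for $\fgl(M|N)$ both $\on{Irr}\Lambda_\scrX$ and $\on{Irr}\Lambda_{\scrX^\vee}$ are put in bijection with the non-attacking rook placements $\fR_{M,N}$, equivalently with mazes $\fM_{M,N}$ (Proposition~\ref{gl(M|N) lagr}, Lemma~\ref{pi MN}); for the orthosymplectic families both sides are matched with centrally symmetric mazes $\fM^\iota$ (Proposition~\ref{osp lagr}, Lemma~\ref{symmetric mazes}); and for ${\mathfrak f}(4)$ a direct count gives $9$ on each side (Proposition~\ref{f4 lagr}). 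If you pursue (b), the concrete work is to build those bijections explicitly, not to invoke a duality that is itself conjectural.
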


Furthermore, we expect that there is a distinguished bijection between
$\on{Irr}\widetilde{\Lambda}_\scrX$ and $\on{Irr}\widetilde{\Lambda}_{\scrX^\vee}$.
Let $\CN_\fg\subset\fg$ be the nilpotent cone, and let $Z_\fg=T^*\CB\times_\CN T^*\CB$ be the Steinberg
variety of triples. Let $H(Z_\fg)$ be the top degree part of the Borel-Moore homology of $Z_\fg$ (of degree
equal to $2\dim(Z_\fg)$). Then $H(Z_\fg)$ equipped with the convolution operation forms an algebra
isomorphic to the group algebra $\BC[W_\fg]$ of the Weyl group of $\fg$, see
e.g.~\cite[Theorem 3.4.1]{cg}. A similar convolution operation gives rise to the action of
$H(Z_\fg)$ on $H(\widetilde{\Lambda}_\scrX)$: the homogeneous part of the Borel-Moore homology of  
$\widetilde{\Lambda}_\scrX$ of degree equal to $2\dim\widetilde{\Lambda}_\scrX$. This is a based
vector space with a basis of fundamental classes of irreducible components in
$\on{Irr}\widetilde{\Lambda}_\scrX$. Note that $\fg$ and $\fg^\svee$ share the same Weyl group, so
we have a canonical isomorphism $H(Z_\fg)\cong H(Z_{\fg^\vee})$.

\begin{conj}
  \label{medium}
  The 
  $H(Z_\fg)\cong H(Z_{\fg^\vee})$-modules $H(\widetilde{\Lambda}_\scrX)$
  and $H(\widetilde{\Lambda}_{\scrX^\vee})$ are isomorphic.
\end{conj}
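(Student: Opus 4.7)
The plan is to identify $H(\widetilde{\Lambda}_\scrX)$ as a $\BC[W_\fg]$-module via Springer theory applied to a Steinberg-type construction, and thereby reduce Conjecture~\ref{medium} to matching certain multiplicity spaces between $\scrX$ and $\scrX^\vee$; the matching is then to be performed case-by-case on the examples announced in the abstract.

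The first step is the Cartesian identification
\[
\widetilde{\Lambda}_\scrX\ \cong\ \bmu^{-1}(\CN_\fg)\times_{\CN_\fg}T^*\CB,
\]
obtained by observing that vanishing of the diagonal moment map on $\scrX\times T^*\CB$ forces $\bmu(x)$ to lie in some $\fb'^\perp\subset\CN_\fg$ (the sign is absorbed by $\xi\mapsto-\xi$). Since the Springer resolution $\pi\colon T^*\CB\to\CN_\fg$ is proper, the Decomposition Theorem gives
\[
\pi_*\underline{\BC}_{T^*\CB}[\dim T^*\CB]\ \cong\ \bigoplus_{(\CO,\chi)}V_{(\CO,\chi)}\otimes\IC(\overline{\CO},\chi),
\]
summed over Springer pairs, with $V_{(\CO,\chi)}$ the corresponding irreducible $W_\fg$-module. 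Pulling back along $\bmu$ and taking Borel--Moore homology in top degree, the compatibility of the convolution $H(Z_\fg)$-action with this decomposition yields an isomorphism of $\BC[W_\fg]$-modules
\[
H(\widetilde{\Lambda}_\scrX)\ \cong\ \bigoplus_{(\CO,\chi)}V_{(\CO,\chi)}\otimes m_{(\CO,\chi)}(\scrX),
\]
with each multiplicity space $m_{(\CO,\chi)}(\scrX)$ extracted from the top cycles on $\bmu^{-1}(\CO)$ together with the action of the component group of a centralizer in $G$. Under the canonical identification $W_\fg=W_{\fg^\vee}$, the conjecture becomes the family of equalities
\[
\dim m_{(\CO,\chi)}(\scrX)\ =\ \dim m_{(\CO',\chi')}(\scrX^\vee)
\]
whenever $(\CO,\chi)$ on the $\fg$-side and $(\CO',\chi')$ on the $\fg^\vee$-side are sent to the same irreducible $W$-representation by their respective Springer correspondences.

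Outside particular families, no direct construction of $\scrX^\vee$ from $\scrX$ is available, so the matching must be carried out example-by-example on the two classes cited in the abstract. For the hyperspherical equivariant slices, I would identify the nilpotent orbits meeting $\bmu(\scrX)$ and the equivariant local systems that occur, read off the multiplicities from the local Slodowy-slice geometry, and compare with the analogous computation on the dual side. For the basic classical Lie superalgebras, the irreducible components of $\Lambda_\scrX$ admit an explicit combinatorial parametrization (bipartitions, $\epsilon$-tableaux, and the like), which makes the $W_\fg$-character computable in closed form. The main obstacle will be controlling the local systems $\chi$: listing irreducible components of $\widetilde{\Lambda}_\scrX$---which already suffices for Conjecture~\ref{weak}(b)---is typically combinatorial and tractable, but identifying precisely which Springer pairs occur and with what multiplicity requires monodromy input on $\bmu^{-1}(\CO)$ that must be supplied separately on each side of the duality before the $W_\fg$-module isomorphism can be completed.
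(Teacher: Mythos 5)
Conjecture~\ref{medium} is left open in the paper; the paper only \emph{states} it and then, in the body, verifies the weaker Conjecture~\ref{weak}(a),(b) case by case (equality of $\sharp\on{Irr}\Lambda_\scrX$ and $\sharp\on{Irr}\Lambda_{\scrX^\vee}$) for the hyperspherical equivariant slices and the basic classical Lie superalgebras. The introduction even flags explicitly that ``it would be interesting to describe the resulting representations of the Weyl groups $W_\fg$''---that is, the content of Conjecture~\ref{medium}---as a project beyond the scope of the note. So there is no paper proof to compare against; what you have written is a program, not a proof, and the paper has not carried out this program either.

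That said, your reduction is the natural one and is set up correctly: $\widetilde\Lambda_\scrX \cong \bmu^{-1}(\CN_\fg)\times_{\CN_\fg} T^*\CB$ (up to the harmless sign in the moment map), the convolution action of $H(Z_\fg)\cong\BC[W_\fg]$ is the Steinberg/Springer action, and pushing the decomposition theorem for the Springer map through $\bmu$ does produce multiplicity spaces $m_{(\CO,\chi)}(\scrX)$ governed by cycles in $\bmu^{-1}(\CO)$ together with the component-group action. Two points deserve more care than your sketch gives them. First, extracting the $W_\fg$-isotypic decomposition from the \emph{top-degree} Borel--Moore homology requires some control on $\bmu$; in the pseudospherical setting of the paper, where $\scrX$ is a twisted cotangent bundle with finitely many relevant $B$-orbits, this is manageable, but it is an input, not a formality. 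Second---and this is the crux---the Springer correspondences of $\fg$ and of $\fg^\vee$ are genuinely different bijections into $\widehat{W_\fg}=\widehat{W_{\fg^\vee}}$ (already for $B_n$ versus $C_n$), so the desired equalities $\dim m_{(\CO,\chi)}(\scrX)=\dim m_{(\CO',\chi')}(\scrX^\vee)$ pair orbits that look nothing alike on the two sides. The paper's explicit bookkeeping (the maze combinatorics for $\fgl(M|N)$ and $\osp$, and in the $\mathfrak f(4)$ case the Spaltenstein-table count of Springer-fiber components with the $\pi_1$-action) is exactly the raw data one would have to refine to get the multiplicity spaces, not just their total dimension summed over components. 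Until that refinement is done---which neither you nor the paper does---Conjecture~\ref{medium} remains open.
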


Furthermore, let $Q\scrX$ and $Q\scrX^\vee$ denote quantizations of $\scrX$ and $\scrX^\vee$.
For example, if $\scrX\simeq T^*_\psi\scrY$ is polarizable (a twisted cotangent bundle), then
$Q\scrX$ is the ring of $\psi$-twisted differential operators on $\scrY$. We consider the (big)
derived category $\calC$ of $B$-equivariant objects in $Q\scrX\modu$, and denote by
$\calC^\scrX_{B\on{-equiv}}$ the category of compact objects in $\calC$.
Variant: we denote by $\widehat\calC{}^\scrX_{B\on{-equiv}}$ the category of locally compact objects
in $\calC$ (i.e.\ the ones that become compact after forgetting the equivariant structure).

We also consider the derived category $\calC^{\scrX^\vee}_{B^\vee\on{-mon,uni}}$ of $B^\vee$-monodromic
finitely generated $Q\scrX^\vee$-modules with unipotent monodromic.
Variant: $\widehat\calC{}^{\scrX^\vee}_{B^\vee\on{-mon,uni}}$
is the category of free prounipotent $B^\vee$-monodromic $Q\scrX^\vee$-modules (i.e.\ the ones whose
averaging $\on{Av}_{U^\vee}^{B^\vee}$ are finitely generated over $Q\scrX^\vee$).

We expect that $\calC^\scrX_{B\on{-equiv}},\ \widehat\calC{}^\scrX_{B\on{-equiv}},\
\calC^{\scrX^\vee}_{B^\vee\on{-mon,uni}}$ and $\widehat\calC{}^{\scrX^\vee}_{B^\vee\on{-mon,uni}}$
have {\em graded} versions (see e.g.~\cite[\S5.1]{bv})
$\calC^{\scrX,\on{gr}}_{B\on{-equiv}},\ \widehat\calC{}^{\scrX,\on{gr}}_{B\on{-equiv}},\
\calC^{\scrX^\vee,\on{gr}}_{B^\vee\on{-mon,uni}},\ \widehat\calC{}^{\scrX^\vee,\on{gr}}_{B^\vee\on{-mon,uni}}$ such that the following Koszul dualities hold:

\begin{conj}
  \label{strong}
  There are equivalences
  $\kappa\colon\calC^{\scrX,\on{gr}}_{B\on{-equiv}}\iso\calC^{\scrX^\vee,\on{gr}}_{B^\vee\on{-mon,uni}}$
  and $\widehat\kappa\colon\widehat\calC{}^{\scrX,\on{gr}}_{B\on{-equiv}}\iso
  \widehat\calC{}^{\scrX^\vee,\on{gr}}_{B^\vee\on{-mon,uni}}$, cf.~\cite{by}.
\end{conj}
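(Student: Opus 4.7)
The plan is to model the conjecture on the Bezrukavnikov--Yun Koszul duality \cite{by}, which handles the ``trivial'' hyperspherical case of the flag variety itself. A general hyperspherical $\scrX$ should admit a presentation in terms of the flag-variety case together with the extra data of a Hamiltonian $G$-action and a quantization, and the Koszul equivalence should then be transported along this presentation to yield the desired $\kappa$ and $\widehat\kappa$.

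First I would decategorify. By Beilinson--Bernstein localization and its symplectic analogues, simple $B$-equivariant $Q\scrX$-modules should be parameterized (after fixing a central character) by irreducible components of $\widetilde\Lambda_\scrX$ via their characteristic cycles, with the convolution action of $H(Z_\fg)$ on $H(\widetilde\Lambda_\scrX)$ categorifying the action of wall-crossing and translation functors on $\calC^\scrX_{B\on{-equiv}}$. If Conjecture~\ref{medium} holds, this already furnishes a Grothendieck-group isomorphism compatible with the $H(Z_\fg)$-action, and the remaining task is to lift it to a triangulated equivalence. To build $\kappa$ itself I would use a convolution kernel derived from $\scrX$: realize $\calC^\scrX_{B\on{-equiv}}$ as the $G$-invariants of an unfolded category combining $Q\scrX\modu$ with $D$-modules on $G/B$, apply \cite{by} to the flag-variety factor, and then use the BZSV dualization $\scrX \leadsto \scrX^\vee$ of \cite{bzsv} to recognize the output as $\widehat{\calC}{}^{\scrX^\vee}_{B^\vee\on{-mon,uni}}$. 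Compatibility of this construction with the elementary operations assembling $\scrX^\vee$ from its building blocks (Hamiltonian reduction, twisted cotangent bundles of spherical homogeneous spaces, extension by a symplectic representation) would then be verified one operation at a time; in each case one expects $\kappa$ to intertwine a ``big'' operation on the $G$-side with a ``small'' one on the $G^\vee$-side, in the spirit of derived Satake.

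The main obstacle will be the installation of the grading. In \cite{by} the mixed structure on both sides comes from a $\Gm$-loop rotation or from an $\ell$-adic Frobenius weight; for general hyperspherical $\scrX$ the natural candidate is the BZSV shearing $\Gm$-action of \cite[\S3.5]{bzsv}, which rescales the symplectic form with weight~$2$. Even in the polarizable case $\scrX = T^*_\psi\scrY$, where $Q\scrX$ is the sheaf of twisted differential operators on $\scrY$, equipping $Q\scrX\modu$ with a mixed structure compatible with this grading and proving that $\kappa$ exchanges standards with tiltings and simples with projectives is delicate; for non-polarizable $\scrX$ one must first produce $Q\scrX$ itself. The most promising route is to reduce via Hamiltonian reduction to polarizable building blocks, to establish that Koszul duality commutes with Hamiltonian reduction as a standalone result, and then to propagate both the grading and the functor $\kappa$ to arbitrary hyperspherical varieties.
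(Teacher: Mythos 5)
This statement is \conjref{strong} in the paper; it is stated as an open conjecture and the paper offers no proof of it (nor of \conjref{medium}), only case-by-case verifications of \conjref{weak}(b) for hyperspherical equivariant slices. So there is no ``paper's own proof'' to compare against. Your text is not a proof either: it is a research program, and you say so yourself when you flag the installation of the mixed/graded structure and the non-polarizable case as open obstacles. Those are indeed where the difficulty lies, and acknowledging them is honest, but the net effect is that nothing in the proposal closes the conjecture.

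Two concrete issues worth flagging in your roadmap. First, you lean on \conjref{medium} as an intermediate step; since that is itself unproved, the decategorified isomorphism of $H(Z_\fg)$-modules you invoke is not available, and the passage from a Grothendieck-group statement to a triangulated equivalence is in any case the hard direction, not a formality. Second, the claim that Koszul duality ``commutes with Hamiltonian reduction as a standalone result'' and that one can thereby propagate $\kappa$ from polarizable building blocks to general hyperspherical $\scrX$ is precisely the kind of statement that would need to be a theorem before this strategy could work; it is at least as deep as the conjecture itself, and the BZSV construction of $\scrX^\vee$ does not come with a matching functor on module categories. Until one has, at minimum, a construction of the kernel realizing $\kappa$ in a single nontrivial example beyond $\scrX=T^*G$ (e.g.\ one of the slices treated in this paper or the $\fgl(M|N)$ case, where both sides are explicit twisted $\cD$-modules), the proposal remains a plausible heuristic rather than a proof.
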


\begin{rem}
  Conjecture~\ref{strong} or at least its 2-periodic (ungraded) version, is likely to follow
  from the loop rotation equivariant version of the main local
  conjecture~\cite[Conjecture 7.5.1]{bzsv} by passing to the fixed points of the
  loop rotations. More precisely, we fix a dominant regular coweight $\lambda$
  and consider the $\BG_m$-action on $\scrX(\!(t)\!)$ via $(t,t^\lambda)$,
  where the first factor acts via loop rotations, and the second one via the natural
  $G$-action. Then by the localization theorem, in the polarizable case $\scrX=T^*_\psi\scrY$,
  the localization of $Q\scrX(\!(t)\!)\on{-mod}^{G[\![t]\!]}$ is
  the category of $\psi$-twisted $D$-modules on the stack of $\BG_m$-fixed
  points of
  $G[\![t]\!]\backslash\scrY(\!(t)\!)$. This fixed point stack is $B\backslash\scrY$.
  In the coherent side of~\cite[Conjecture 7.5.1]{bzsv}, after localization of
  weakly $G^\vee$-equivariant sheared asymptotic $Q\scrX^\vee$-modules
  we obtain strongly $G^\vee$-equivariant
  $Q\scrX^\vee\otimes\CalD_{\hat\lambda}(G^\vee/B^\vee)$-modules. The latter category
  is equivalent to $\calC^{\scrX^\vee}_{B^\vee\on{-mon,uni}}$.
  \end{rem}

\subsection{Basic classical Lie superalgebras and equivariant slices}
In this note we check~Conjecture~\ref{weak} for a certain class of hyperspherical varieties
$G\circlearrowright\scrX$ considered in~\cite{fu}: the hyperspherical equivariant slices.
The dual class $G^\vee\circlearrowright\scrX^\vee$ consists of linear symplectic representations arising
from (the odd parts of) the basic classical Lie superalgebras.\footnote{Strictly speaking,
we check a certain generalization of~Conjecture~\ref{weak} since for certain cases within our
class of hyperspherical varieties, the action $G\circlearrowright\scrX$ has {\em anomaly}, and
these cases are not formally covered by~\cite[\S4]{bzsv}. In these cases $G^\vee$ is not the
Langlands dual of $G$, but rather a certain {\em metaplectic} Langlands dual, see e.g.~\cite{gl}.} 
Namely, part~\ref{weak}(a) is
immediate since all the hyperspherical varieties in question turn out to be {\em pseudospherical},
that is isomorphic to twisted cotangent bundles of certain $B$-varieties with finitely many
{\em relevant} $B$-orbits, cf.~Remark~\ref{super}.

Part~\ref{weak}(b) is checked case by case. In case
$G^\vee\circlearrowright\scrX^\vee=\GL(M)\times\GL(N)\circlearrowright T^*\Hom(\BC^N,\BC^M)$ (the odd
part of $\sg=\fgl(M|N)$), both $\on{Irr}\Lambda_\scrX$ and $\on{Irr}\Lambda_{\scrX^\vee}$
are indexed by the set $\fR_{M,N}$ of placements of non-attacking rooks on the $M\times N$-chessboard,
see~Proposition~\ref{gl(M|N) lagr}. We also introduce a combinatorial notion of a {\em maze},
see~Definition~\ref{maze} and~Fig.~\ref{a maze}. The set $\fM_{M,N}$ of $M\times N$-mazes is in a natural bijection
with $\fR_{M,N}\simeq\on{Irr}\Lambda_\scrX\simeq\on{Irr}\Lambda_{\scrX^\vee}$.

In case $G^\vee\circlearrowright\scrX^\vee=\SO(m)\times\Sp(2n)\circlearrowright\BC^m_+\otimes\BC^{2n}_-$
(the odd part of $\sg=\mathfrak{osp}(m|2n)$), both
$\on{Irr}\Lambda_\scrX$ and $\on{Irr}\Lambda_{\scrX^\vee}$ are indexed by the set $\fM_{m,2n}^\iota$
of centrally symmetric $m\times2n$-mazes, see~Proposition~\ref{osp lagr}. In particular,
if $m=2n$ (resp.\ $m=2n+2$),
$\sharp\fM_{2n,2n}^\iota=\sum_{k=1}^{n+1}\frac{n!}{(k-1)!}\binom{n}{k-1}2^{n-k+1}$
(resp.\ $\sharp\fM_{2n+2,2n}^\iota=\sum_{k=1}^{n+1}\frac{(n+1)!}{k!}\binom{n}{k-1}2^{n-k+1}$)
is the number of signed partitions into lists, see~\cite[Propositions 3.16, 3.14]{ce}.

In case $G^\vee\circlearrowright\scrX^\vee=\Spin(7)\times\Sp(2)\circlearrowright\BC^8_+\otimes\BC^2_-$
(the odd part of the exceptional Lie superalgebra $\sg=\mathfrak{f}(4)$), both
$\on{Irr}\Lambda_\scrX$ and $\on{Irr}\Lambda_{\scrX^\vee}$ have cardinality~9, see~Proposition~\ref{f4 lagr}.
Finally, in case
$G^\vee\circlearrowright\scrX^\vee=\on{G}_2\times\Sp(2)\circlearrowright\BC^7_+\otimes\BC^2_-$
(the odd part of the exceptional Lie superalgebra $\sg=\mathfrak{g}(3)$), both
$\on{Irr}\Lambda_\scrX$ and $\on{Irr}\Lambda_{\scrX^\vee}$ have cardinality~7, see~\cite{k}.

We collect the results of counting $\sharp\on{Irr}\Lambda_{\scrX}=\sharp\on{Irr}\Lambda_{\scrX^\vee}$
in~Table~\ref{tab}. In the first column $\CS_{(a,1^b)}$ stands for a Slodowy slice to a nilpotent
orbit of the Jordan `hook' type $(a,1^b)$, and $\CS_{\on{short}}$ in the last row stands for a
Slodowy slice to the 8-dimensional nilpotent orbit containing the short root vectors in the exceptional
Lie algebra $\fg_2$. Also note that in the rows~4,5,6,10 the group $G^\vee$ is not the Langlands dual
of $G$, but rather a certain {\em metaplectic} Langlands dual (due to the presence of a certain
{\em anomaly} of the action $G\circlearrowright\scrX$).

\begin{table}[h]
  \centering
  \begin{tabular}{c|c|c|c}
    equivariant  & symplectic repre- &
    Lie super- & cardinality \\
    slice $G\circlearrowright\scrX$ & sentation $G^\vee\circlearrowright\scrX^\vee$ &
    algebra $\sg$ & $\sharp\on{Irr}\Lambda_\scrX=$\\
    & & & $\sharp\on{Irr}\Lambda_{\scrX^\vee}$\\
    \hline
    $\GL(N)\times\GL(N)\circlearrowright$  &
    $\GL(N)\times\GL(N)$  &
    $\fgl(N|N)$ & $\sharp\fM_{N,N}$\\
    $T^*(\GL(N)\times\BC^N)$ & $\circlearrowright T^*\Hom(\BC^N,\BC^N)$ & & \\
    \hline
$\GL(N)\times\GL(M)\circlearrowright$  &
    $\GL(N)\times\GL(M)$  &
    $\fgl(M|N)$ & $\sharp\fM_{M,N}$\\
    $\GL(N)\times\CS_{(N-M,1^M)}$ & $\circlearrowright T^*\Hom(\BC^M,\BC^N)$ & $M<N$ & \\
    \hline
$\Sp(2n)\times\Sp(2n)\circlearrowright$  &
    $\SO(2n+1)\times\Sp(2n)$  &
    $\osp(2n+1|2n)$ & $\sharp\fM^\iota_{2n,2n}=$\\
    $(T^*\Sp(2n))\times\BC^{2n}$ & $\circlearrowright\BC^{2n+1}_+\otimes\BC^{2n}_-$ & &
    $\sharp\fM^\iota_{2n+1,2n}$\\
    \hline
$\Sp(2n)\times\Sp(2m)\circlearrowright$  &
    $\SO(2m+1)\times\Sp(2n)$  &
    $\osp(2m+1|2n)$ & $\sharp\fM^\iota_{2m,2n}=$\\
    $\Sp(2n)\times\CS_{(2n-2m,1^{2m})}$ & $\circlearrowright\BC^{2m+1}_+\otimes\BC^{2n}_-$ & $m<n$ &
    $\sharp\fM^\iota_{2m+1,2n}$ \\
    \hline
$\Sp(2n)\times\Sp(2m)\circlearrowright$  &
    $\SO(2n+1)\times\Sp(2m)$  &
    $\osp(2n+1|2m)$ & $\sharp\fM^\iota_{2m,2n}=$\\
    $\Sp(2n)\times\CS_{(2n-2m,1^{2m})}$ & $\circlearrowright\BC^{2n+1}_+\otimes\BC^{2m}_-$ & $m<n$ &
    $\sharp\fM^\iota_{2m,2n+1}$ \\
    \hline
$\SO(2n+1)\times\SO(2m)\circlearrowright$  &
    $\SO(2m)\times\Sp(2n)$  &
    $\osp(2m|2n)$ & $\sharp\fM^\iota_{2m,2n}=$\\
    $\SO(2n+1)\times\CS_{(2n+1-2m,1^{2m})}$ & $\circlearrowright\BC^{2m}_+\otimes\BC^{2n}_-$ & $m\leq n$ &
    $\sharp\fM^\iota_{2m,2n+1}$ \\
    \hline
$\SO(2n)\times\SO(2m+1)\circlearrowright$  &
    $\SO(2n)\times\Sp(2m)$  &
    $\osp(2n|2m)$ & $\sharp\fM^\iota_{2m,2n}=$\\
    $\SO(2n)\times\CS_{(2n-1-2m,1^{2m+1})}$ & $\circlearrowright\BC^{2n}_+\otimes\BC^{2m}_-$ & $m<n$ &
    $\sharp\fM^\iota_{2m+1,2n}$ \\
    \hline
$\PSp(6)\times\PGL(2)\circlearrowright$  &
    $\Spin(7)\times\Sp(2)$  &
    ${\mathfrak f}(4)$ & $9$\\
    $\PSp(6)\times\CS_{(3,3)}$ & $\circlearrowright\BC^8_+\otimes\BC^2_-$ & & \\
    \hline
$\on{G}_2\times\SL(2)\circlearrowright$  &
    $\on{G}_2\times\Sp(2)$  &
    ${\mathfrak g}(3)$ & $7$\\
    $\on{G}_2\times\CS_{\on{short}}$ & $\circlearrowright\BC^7_+\otimes\BC^2_-$ & & \\
    \hline  
\end{tabular}
  \caption{Numbers of irreducible Lagrangian components}
  \label{tab}
\end{table}

It would be interesting to describe the resulting representations of the Weyl groups $W_\fg$,
and partitions of the sets of (centrally symmetric) mazes into microlocal and Kazhdan-Lusztig
cells, cf.~\cite[\S5]{t} for the mirabolic case
$G\circlearrowright\scrX=\GL(N)\times\GL(N)\circlearrowright T^*(\GL(N)\times\BC^N)$.

\subsection{Acknowledgments}
We are grateful to A.~Braverman, A.~Elashvili, A.~Hanany, M.~Jibladze, H.~Nakajima, V.~Serganova and
R.~Yang for very useful discussions.
The research of M.F.~was supported by the Israel Science Foundation (grant No.~994/24).

\section{Generalities}

\subsection{Coisotropic varieties and moment maps}

Let $B\subseteq G$ be a Borel subgroup with Lie algebra $\fb\subset\fg$
  and let $\CB=G/B$ be the flag variety.
  Associated with $\lambda\in \fg^*$ such that $\bar\lambda:=\lambda|_\fb$ is
  a character of $\fb$, there is a  twisted cotangent
  bundle $T^*_{\bar\lambda}\CB=G\times_B(\lambda+\fb^\perp)\to \CB$,
  where  $\fb^\perp\subset\fg^*$ denotes the annihilator of $\fb$ in $\fg^*$.

  Given a smooth  hamiltonian $G$-variety $\scrX$,
  we let $G$ act on $T^*_{\bar\lambda}\CB\times \scrX$ diagonally and write
  $\bmu\colon\scrX\to\fg^*$, resp.\ $\bmu_{T^*_{\bar\lambda}\CB\times \scrX}\colon
  T^*_{\bar\lambda}\CB\times \scrX\to \fg^*$, for the corresponding moment map. 

\begin{prop}
  \label{prop equiv}
  Let $\scrX$ be a smooth affine Hamiltonian $G$-variety,
  and $Z\subset\bmu(\scrX)$ a locally closed $G$-stable subset.
  Let $\bar\lambda\in\fb^*$ be a character of $\fb$, so that
  its preimage in $\fg^*$ is $\bar\lambda+\fb^\perp$.
  We assume that the intersection $Z_{\bar\lambda}:=Z\cap(\bar\lambda+\fb^\perp)$ is nonempty.
  
    Then, we have

    \noindent
    \textup{(1)}  The following are  equivalent:
 
\begin{enumerate}[(a)]
    \item\label{(1)}
      $\bmu^{-1}(Z_{\bar\lambda})$ is Lagrangian in $\scrX$;

    \item\label{(2)}
      For any $\lambda\in Z_{\bar\lambda}$, the fiber $\bmu^{-1}(\lambda)$ is isotropic in $\scrX$;

    \item\label{(3)}
      For any $\lambda\in Z_{\bar\lambda}$, all irreducible components of
    $\bmu^{-1}(\lambda)$ have dimension less than or equal to
    $\frac{1}{2}(\dim\scrX-\dim(G.\lambda))$, where
    $G.\lambda\subset \fg^*$ is the coadjoint orbit of $\lambda$;

 \item\label{(4)}
   Every irreducible component    of $\bmu_{T^*_{\bar\lambda}\CB\times \scrX}^{-1}(0)\times_Z\scrX$
      is Lagrangian in ${T^*_{\bar\lambda}\CB\times \scrX}$.
    \end{enumerate}

    \noindent
    \textup{(2)}     The variety $\scrX$ is  coisotropic if and only if 
    the equivalent conditions in \textup{(1)} hold for an open $G$-stable subset
    $Z\subset\bmu(\scrX)$ and every $\lambda$ such that the intersection
    $Z_{\bar\lambda}$ is nomempty.
\end{prop}

\begin{proof}
We denote by $\omega$ the symplectic form on $\scrX$.
For $x\in \scrX$ we write $E^\perp$ for the annihilator  of a subspace
$E\subset T_x\scrX$, resp. 
$\fg.x$ for the tangent space at $x\in\scrX$ to the $G$-orbit of $x$,
and $\fg_x$ for the Lie algebra of the stabilizer $G_x$ of $x$ in $G$.
It is well known that one has
\begin{equation}\label{fiber}
\Ker(d_x\mu)=(\fg.x)^\perp.
\end{equation}
Observe that the set $\lambda+\fb^\perp$
meets only finitely many coadjoint orbits.
Hence,~\ref{(1)} holds iff  for any such coadjoint orbit $O\subset \fg^*$ 
every irreducible component of $\bmu^{-1}(O\cap (\lambda+\fb^\perp))$
is Lagrangian.
Let $\Lambda$ be such a component and  $Y=\bmu(\Lambda)$. We denote by
$\bmu_\Lambda\colon \Lambda\to Y$ the restriction of $\bmu$ to $\Lambda$.
Without loss of generality we may assume that $\lambda$
is a sufficiently general point of $Y$.
The map $\bmu\colon \bmu^{-1}(O)\to O$ being a locally trivial fibration,
the set $Y$ must be a 
dense subset of an irreducible component of $O\cap (\lambda+\fb^\perp)$
and the set $F:=\Lambda\cap\bmu^{-1}(\lambda)$ must be dense in an irreducible component
  of $\bmu_\Lambda^{-1}(\lambda)$.
Choosing a general $x\in F$ we may assume that
the morphism $\bmu_\Lambda\colon \Lambda\to Y$ is smooth at $x$.
     Consider a vector subspace of $\fg$ defined by
      $V=\{v\in \fg \mid \mathrm{ad}_v(\lambda)\in T_\lambda (O\cap (\lambda+\fb^\perp))\}$.
      We have $T_\lambda Y=T_\lambda (O\cap (\lambda+\fb^\perp))=\mathrm{ad}_V(\lambda)$.
      We know that $O\cap (\lambda+\fb^\perp)$ is Lagrangian in $O$,
      by~\cite[Theorem 3.3.7]{cg}.
      Hence,   for any $v,v'\in V$ we have $\lambda([v,v'])=0$.

     Assume that~\ref{(2)} holds, so $F$ is isotropic in $X$.
     Thus, for any $\xi,\xi'\in T_xF$ we have $\omega(\xi,\xi')=0$.
   Since $\bmu_\Lambda$ is smooth at $x$ as a map to the image of $\Lambda$,
one has
$T_x\Lambda=T_xF+ V.x$.
For any  $v,v'\in V$, by properties of the moment maps we deduce
     $\omega(v.x,v'.x)=\lambda([v,v'])=0$.   
Further, since $T_xF\subset \Ker(d_x\bmu)$ and $v.x,v'.x\in \fg.x$, it follows   from~\eqref{fiber}
     that $\omega(\xi,v'.x)=\omega(v.x,\xi')=0$.
     Thus, we find
   $\omega(\xi+v.x,\, \xi'+ v'.x)=  0+0+0+0$,                       
proving  the implication~\ref{(2)} $\Rightarrow$~\ref{(1)}.
The opposite implication   is clear.

Observe next  that $\Lambda$ is  an   irreducible component of the fiber
  over $\lambda|_\fb\in\fb^*$ of the moment map for the $B$-action on $\scrX$.
 It follows that  $\Lambda$  is a coisotropic subvariety of
 $\scrX$, by~\cite[Theorem 1.5.7]{cg}.
 We deduce that $\Lambda$ is Lagrangian iff
 $\dim \Lambda=\frac{1}{2}\dim \scrX$.  On the other hand, 
since $O\cap (\lambda+\fb^\perp)$ is Lagrangian in $O$,
                                      we find  $\dim \Lambda=\dim F+\dim Y=\dim F+\frac{1}{2}\dim O$.
 The equivalence~\ref{(1)} $\Leftrightarrow$~\ref{(3)} follows.

   To prove the  equivalence~\ref{(1)} $\Leftrightarrow$~\ref{(4)}
we use the canonical isomorphism
   $T^*_{\bar\lambda}{\mathcal B}\times\scrX$ $\cong G\times_B\,\big((\lambda+\fb^\perp) \times \scrX\big)$.
   The subvariety $\bmu_{T^*_{\bar\lambda}{\mathcal B}\times\scrX}^{-1}(0)$
   goes under the isomorphism
   to  $G\times_B\,S$, where
   $S\subset (\lambda+\fb^\perp) \times\bmu^{-1}(\lambda+\fb^\perp)$
   is the graph of the map $\bmu\colon \bmu^{-1}(\lambda+\fb^\perp)\to\lambda+\fb^\perp$.
   Using this, one checks that 
   $\bmu_{T^*_{\bar\lambda}{\mathcal B}\times\scrX}^{-1}(0)$
   is Lagrangian  in $T^*_{\bar\lambda}{\mathcal B}\times_Z\scrX$ iff
   $\bmu^{-1}(\lambda+\fb^\perp)$ is Lagrangian  in $\scrX$. 

   It remains to prove (2). Let $x\in \scrX$ be a sufficiently general point
   so that the morphism $\bmu$ is smooth at $x$.
   It follows that the vector space
   $\Ker(d_x\bmu)$ equals  the tangent space at $x$ to the fiber of $\bmu$ that contains $x$.
   Since $ \Ker(d_x\bmu)=(\fg.x)^\perp$  we see  that general fibers of $\bmu$ are isotropic
   iff   for general $x\in \scrX$, the vector space $(\fg.x)^\perp$ is isotropic,
   equivalently,  the vector
   space  $\fg.x$ is coisotropic.
   The latter holds iff
   general $G$-orbits are coisotropic.
   We conclude that $\scrX$ is coisotropic 
      iff condition~\ref{(2)} of part (1) holds for general fibers.
\end{proof}

\begin{rem}
  Assume that a coisotropic $\scrX$ is equipped with an action of $\BG_m$ such that $\omega$
  has a positive weight, and the induced $\BG_m$-action contracts $\scrX/\!\!/G$
  to a point. 
  If $\bmu^{-1}(\fb^\perp)$ is Lagrangian (as conjectured in~\ref{weak}(a)),
  then~Proposition~\ref{prop equiv}(1) holds with $Z=\bmu(\scrX)$.
\end{rem}

\section{F(4)}

\subsection{Setup}
We consider a nilpotent element $e$ of Jordan type $(3,3)$ in the symplectic Lie algebra
$\fsp(6)$. Let $\CS_e$ be a Slodowy slice to the nilpotent orbit $\BO_e\subset\fsp(6)$.
A maximal reductive subgroup of the centralizer of $e$ in $\PSp(6)$ is $\PGL(2)$.
By the construction of~\cite[\S2]{l}, cf.~\cite[\S3.4]{bzsv}, the product
$\scrX:=\PSp(6)\times\CS_e$ carries a symplectic structure and a hamiltonian action of
$G:=\PSp(6)\times\PGL(2)$. This action is coisotropic~\cite[\S2.3]{fu}. It satisfies
the conditions~\cite[\S3.5.1]{bzsv} of hypersphericity.
Moreover, $\scrX$ is polarizable, i.e.\ $\scrX$ is a twisted cotangent bundle of the following
spherical $G$-variety $\scrY$.

We view $\PSp(6)$ as the quotient by $\{\pm1\}$ of the automorphism group $\Sp(6)$ of
$V=\BC^6$ equipped with a symplectic pairing
$\langle\ ,\ \rangle$. We choose a basis $v_1,\ldots,v_6$ in $V$ such that
$\langle v_i,v_{7-i}\rangle=1=-\langle v_{7-i},v_i\rangle$, $i=1,2,3$, and all the other products vanish.
Let $\on{P}\subset\PSp(6)$ be the stabilizer of the subspace $V_{12}=\BC v_1\oplus\BC v_2$.
Let $\on{U}$ be the unipotent radical of $\on{P}$. Let $\psi\colon\on{U}\to\BG_a$ be the
character corresponding to the character of its Lie algebra $\fu$ given by the sum of
matrix elements $u_{13}+u_{24}$. Then $\scrY=\PSp(6)/\on{U}$, and $\scrX=T^*_\psi\scrY$.

Note that $\scrY$ carries the (left) action of $\PSp(6)$ and the (right) commuting action
of the Levi quotient $\on{L}=\on{P}/\on{U}$ isomorphic to the quotient of $\GL(2)\times\SL(2)$
modulo the diagonal copy of $\{\pm1\}$. The Levi quotient $\on{L}$ contains the diagonal
copy of $\SL(2)/\{\pm1\}=\PGL(2)$ that stabilizes the character $\psi$. In other words,
$\psi$ extends to the same named character of $H:=\PGL(2)\ltimes\on{U}$. We will view
$H$ as a subgroup of $G$ (with respect to the diagonal embedding
$H\hookrightarrow\PSp(6)\times\PGL(2)$). Then $\scrY=G/H$.

The recipe of~\cite[\S4]{bzsv} produces from $G\circlearrowright\scrX$ the dual symplectic
variety $\scrX^\vee$ with a hamiltonian coisotropic action of $G^\vee=\Spin(7)\times\SL(2)$.
Namely, $\scrX^\vee$ is a symplectic vector representation of $G^\vee$ equal to the tensor product
of the spinor representation $\BC^8_+\circlearrowleft\Spin(7)$ (with its invariant symmetric
bilinear form) and the tautological 2-dimensional representation $\BC^2_-\circlearrowleft\SL(2)$
(with its invariant skew-symmetric bilinear form).

Let $\fb\subset\fg=\on{Lie}G,\ \fb^\svee\subset\fg^\svee=\on{Lie}G^\vee$ be Borel subalgebras.
Let $\fb^\perp\subset\fg^*,\ (\fb^\svee)^\perp\subset(\fg^\svee)^*$ be their annihilators.
Let $\bmu\colon\scrX\to\fg^*,\ \bmu^\svee\colon\scrX^\vee\to(\fg^\svee)^*$ be the moment maps.
Let $\Lambda_\scrX=\bmu^{-1}(\fb^\perp),\ \Lambda_{\scrX^\vee}=(\bmu^\svee)^{-1}((\fb^\svee)^\perp)$.

\begin{prop}
  \label{f4 lagr}
  Both $\Lambda_\scrX$ and $\Lambda_{\scrX^\vee}$ are Lagrangian subvarieties with 9 irreducible
  components.
\end{prop}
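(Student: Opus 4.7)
The plan is to reduce each half of the proposition to a finite orbit-enumeration, using the polarized structure of $\scrX$ and $\scrX^\vee$. For $\Lambda_\scrX$: since $\scrX\simeq T^*_\psi\scrY$ is a twisted cotangent bundle of the spherical $G$-variety $\scrY=G/H$, one has
\[ \Lambda_\scrX=\bmu^{-1}(\fb^\perp)=\bigsqcup_{O}T^*_{O,\psi}\scrY, \]
where $O$ ranges over the $\psi$-relevant $B$-orbits on $\scrY$ and $T^*_{O,\psi}\scrY$ denotes the $\psi$-twisted conormal bundle (cf.~Remark~\ref{super}). Each stratum is Lagrangian of dimension $\dim\scrY$, so $\Lambda_\scrX$ is Lagrangian and its irreducible components biject with the $\psi$-relevant $B$-orbits on $\scrY$, or equivalently with the $\psi$-relevant $H$-orbits on $G/B=\CB_{\PSp(6)}\times\PP^1$. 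I would enumerate these by first describing the $\on{U}$-orbits on $\CB_{\PSp(6)}$ via Bruhat decomposition relative to the Siegel parabolic $\on{P}$, then folding in the diagonal $\PGL(2)\subset\on{L}$-action together with the $\PP^1$ factor, and finally verifying $\psi$-relevance by restricting $\psi$ to the unipotent radical of each stabilizer; the target count is $9$.

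For $\Lambda_{\scrX^\vee}$: choose a $B_{\SL(2)}$-stable line $\ell\subset\BC^2_-$, so that $L:=\BC^8_+\otimes\ell\subset\scrX^\vee$ is a $B^\vee$-stable Lagrangian and $\scrX^\vee\cong T^*L$. By the same formalism, $\Lambda_{\scrX^\vee}=(\bmu^\vee)^{-1}((\fb^\svee)^\perp)$ is the union of conormal bundles to $B^\vee$-orbits on $L\cong\BC^8_+$, hence Lagrangian with components indexed by such orbits. Since $B_{\SL(2)}$ acts on $L$ only by a scaling character, these orbits coincide with $(B_{\Spin(7)}\times\BC^\times)$-orbits on the spinor representation $\BC^8_+$, that is, with $B_{\Spin(7)}$-orbits on $\PP(\BC^8_+)$ together with the origin. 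Since $\BC^8_+$ is a spherical $\Spin(7)$-variety, this list is finite, and I would enumerate it using the weight decomposition of $\BC^8_+$ together with the $\Spin(7)$-invariant quadratic form (which governs when neighbouring cells collapse under the positive root action); the target count is again $9$.

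The principal obstacle is the explicit orbit enumeration. On the $\scrX$ side, the delicacy is the combined twist by $\psi$ and the non-obvious diagonal embedding $H\hookrightarrow\PSp(6)\times\PGL(2)$, forcing a careful case analysis of stabilizers. On the $\scrX^\vee$ side, it is the interaction of the $B_{\Spin(7)}$-action with the invariant quadratic form on $\BC^8_+$, which controls the degeneration pattern between strata. A canonical bijection between the two sets of nine orbits---analogous to the maze indexing of Definition~\ref{maze} for the classical cases---would be desirable and would align with Conjecture~\ref{medium}, but for Proposition~\ref{f4 lagr} itself matching cardinalities suffices.
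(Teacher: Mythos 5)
On the $\scrX$ side your plan coincides with the paper's: identify $\on{Irr}\Lambda_\scrX$ with the $\psi$-relevant $B$-orbits on $\scrY$, pass to the flag variety, and do the case analysis. The paper's ``economical reformulation'' works with $H_{\PSp(6)}$-orbits on $\CB_{\PSp(6)}$ rather than $H$-orbits on the full $G/B = \CB_{\PSp(6)}\times\PP^1$, but these are equivalent; your outline is sound.

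On the $\scrX^\vee$ side your route is genuinely different from the paper's. The paper stratifies $(\bmu^\svee)^{-1}(\CN_{\fg^\svee})$ by $G^\vee$-orbits (there are six), pushes each stratum forward under $\bmu^\svee$ to a product of nilpotent $\SO(7)\times\SL(2)$-orbits, and counts Lagrangian components stratum-by-stratum using intersections with $\fn^\svee$ and Springer-fiber component counts (the tally is $1+1+0+3+4=9$). Your proposal is instead the conormal/relevant-orbit enumeration, which is in fact the route the paper uses for the $\fgl$ and $\osp$ families (via Remark~\ref{super} and Lemma~\ref{relev}). In principle that method applies to $\mathfrak f(4)$ as well, and carrying it out would nicely unify the exceptional case with the classical ones.

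However, there is a genuine gap in your formulation of that step. You write that $\Lambda_{\scrX^\vee}$ ``is the union of conormal bundles to $B^\vee$-orbits on $L\cong\BC^8_+$, hence Lagrangian with components indexed by such orbits.'' This is not correct as stated, and the error is precisely the one Remark~\ref{super} (which you cite) is designed to address. The identification $\scrX^\vee\cong T^*L$ is \emph{not} $B^\vee$-equivariant: the $B^\vee$-stable Lagrangian $L=\BC^8_+\otimes\ell$ has no $B^\vee$-stable complement, because the $B_{\SL(2)}$-stable line $\ell\subset\BC^2_-$ has no $B_{\SL(2)}$-stable complement. Consequently the zero level of the $B^\vee$-moment map is not the union of conormal bundles to \emph{all} $B^\vee$-orbits on the base; per Remark~\ref{super}, $\on{q}^{-1}(\BO)$ is empty for non-relevant $\BO$ and is a \emph{torsor} over $T^*_\BO\scrQ$ for relevant $\BO$. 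So $\on{Irr}\Lambda_{\scrX^\vee}$ is in bijection with the \emph{relevant} $B^\vee$-orbits only. This is not a cosmetic issue: the $B^\vee$-orbits on the base (with $B_{\SL(2)}$ acting through scaling) are the origin, plus the eight $B_{\Spin(7)}$-Schubert cells of the closed orbit $Q^6\cong\on{OGr}(3,7)\subset\PP(\BC^8_+)$, plus at least one orbit in $\PP^7\setminus Q^6$, so the total is at least $10$. Some orbits must fail relevance in order to arrive at the target count $9$, and your outline omits this check entirely. You would need to analyse, for each $B^\vee$-orbit, the affine-linear equation of Remark~\ref{super} (vanishing of $\langle\fx,b\fx-\fx\rangle$ on the stabilizer) to determine relevance before the count can be completed.
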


The proof occupies~\S\S\ref{f4 slice},\ref{f4 linear}.

\subsection{Equivariant slice}
\label{f4 slice}
First we count the irreducible components of $\Lambda_\scrX$.

The irreducible components of $\Lambda_\scrX$ are the twisted conormal bundles to the
{\em relevant} $B$-orbits in $\scrY$ (where $B\subset G$ is the Borel subgroup with Lie algebra
$\fb$). Equivalently, we consider the flag variety $\CB=G/B$ with the action of $H$.
Then an $H$-orbit in $\CB$ is called relevant if the restriction of $\psi$ to the
stabilizer of a point in this orbit is trivial. Let us give yet one more equivalent
but more economical reformulation. Let $B_{\PGL(2)}\subset\PGL(2),\ B_{\PSp(6)}\subset\PSp(6)$
be the upper-triangular Borel subgroups. Let
$H_{\PSp(6)}:=B_{\PGL(2)}\ltimes\on{U}\subset\PSp(6)$ (we view $B_{\PGL(2)}\subset\PGL(2)$
as a (diagonal) subgroup of $\on{L}=(\GL(2)\times\SL(2))/\{\pm1\}$).
Then $H_{\PSp(6)}$ acts on the flag variety
$\CB_{\PSp(6)}=\PSp(6)/B_{\PSp(6)}$ with finitely many orbits, and we have to count the relevant
$H_{\PSp(6)}$-orbits.

Since $H_{\PSp(6)}\subset B_{\PSp(6)}$, the $H_{\PSp(6)}$-orbits on $\CB_{\PSp(6)}$ are contained in
the $B_{\PSp(6)}$-orbits (Schubert cells), indexed by the Weyl group $W$ of $\PSp(6)$.
Let $U_{\PSp(6)}$ be the unipotent radical of $B_{\PSp(6)}$, and let $U'$ be the unipotent
radical of $H_{\PSp(6)}$. We have the embeddings of unipotent groups
$\on{U}\subset U'\subset U_{\PSp(6)}$ of dimensions~7,8,9 respectively.

The set $R_+$ of positive roots (of $\fu_{\PSp(6)}$) consists of
$2\epsilon_1,2\epsilon_2,2\epsilon_3,
\epsilon_1-\epsilon_2,\epsilon_1-\epsilon_3,\epsilon_2-\epsilon_3,
\epsilon_1+\epsilon_2,\epsilon_1+\epsilon_3,\epsilon_2+\epsilon_3$. Among them the simple
ones are $\alpha_1=\epsilon_1-\epsilon_2,\ \alpha_2=\epsilon_2-\epsilon_3,\ \alpha_3=2\epsilon_3$.
The Lie subalgebra $\on{Lie}\on{U}=\fu\subset\fu_{\PSp(6)}$ is spanned by the root subspaces
of $\fu_{\PSp(6)}$ with the exception of $\alpha_1$ and $\alpha_3$.
The character $\psi$ is nontrivial on the root subspaces
$\epsilon_1-\epsilon_3,\epsilon_2+\epsilon_3$ of $\fu$.

The Weyl group $W$ is generated by the simple reflections $s_1,s_2,s_3$ and has~48 elements.
For $w\in W$ we denote by $X_w\subset\CB_{\PSp(6)}$ the corresponding Schubert cell of dimension
$\ell(w)$. We denote by $x_w\in X_w$ the unique point fixed by the diagonal (in the basis
$v_1,\ldots,v_6$) torus $T\subset\PSp(6)$. We denote by $Y_w\subset X_w$ the orbit $U'\cdot x_w$.
It coincides with the orbit $H_{\PSp(6)}\cdot x_w$ since $x_w=T\cdot x_w$, and
$H_{\PSp(6)}=T'\ltimes U'$ for the one-parametric subgroup $T'\subset T$ corresponding to the
coweight $\epsilon_1^*-\epsilon_2^*+\epsilon_3^*$. If $Y_w\subsetneqq X_w$ is a proper inclusion,
then the dimension $\dim Y_w$ must be equal to $\dim X_w-1$, and the complement
$Y'_w=X_w\setminus Y_w$ must form a single $H_{\PSp(6)}$-orbit.

This orbit splitting $X_w=Y_w\sqcup Y'_w$ occurs if and only if $w$ is of maximal length
in the coset $wW_{13}$ of the parabolic subgroup $W_{13}\subset W$ generated by $s_1,s_3$.
Indeed, the set of orbits $B_{\PSp(6)}\backslash\CB_{\PSp(6)}$ is in natural bijection with the set
of $\PSp(6)$-orbits in the set of pairs $(V_\bullet^{(1)},V_\bullet^{(2)})$, where $V_\bullet^{(1,2)}$
are complete self-orthogonal flags in $V$. The set of orbits $H_{\PSp(6)}\backslash\CB_{\PSp(6)}$
is in natural bijection with the set of $\PSp(6)$-orbits in the set of triples
$(V_\bullet^{(1)},V_\bullet^{(2)},\varphi)$, where $\varphi$ is an isomorphism
$V_4^{(1)}/V_2^{(1)}\iso V_2^{(1)}$ taking $V_3^{(1)}/V_2^{(1)}$ to $V_1^{(1)}$.
Now $X_w$ splits into two orbits $X_w=Y_w\sqcup Y'_w$ iff in both subquotients $V_4^{(1)}/V_2^{(1)}$
and $V_2^{(1)}$, the second flag $V_\bullet^{(2)}$ cuts out the lines distinct from
$V_3^{(1)}/V_2^{(1)}$ and $V_1^{(1)}$ respectively (that is, along with the latter lines, giving
rise to projective bases in these subquotients): the splitting is
according to the dichotomy if $\varphi$ takes the former base to the latter one or if it does not.
Finally, the second flag $V_\bullet^{(2)}$ gives
rise to projective bases in both subquotients $V_4^{(1)}/V_2^{(1)}$ and $V_2^{(1)}$ iff
$w$ is of maximal length in the coset $wW_{13}$.

The stabilizer of $x_w$ in $U'$ is equal to $U'\cap U_{\PSp(6)}^w$. The character $\psi$ vanishes
on this intersection iff $\epsilon_1-\epsilon_3\not\in wR_+\not\ni\epsilon_2+\epsilon_3$.
In other words, the orbit $Y_w$ is relevant iff
$w^{-1}(\epsilon_1-\epsilon_3)\not\in R_+\not\ni w^{-1}(\epsilon_2+\epsilon_3)$.
By inspection, this only happens for $w_1=s_2s_1s_3s_2s_1=:s_{21321},\ w_2=s_{213213},\
w_3=s_{1232132},\ w_4=s_{2321232},\ w_5=s_{32123213},\ w_6=s_{23212323},\ w_7=s_{23212321},\ w_8=w_0$ (the longest
element). Of these~8 elements, $w_i$ is maximal in $w_iW_{13}$ iff $i=2,5,8$. Hence for these~3
values of $i$, the orbit $X_{w_i}$ splits into $Y_{w_i}\sqcup Y'_{w_i}$.

Note that if $X_w=Y_w\sqcup Y'_w$ and $Y_w$ is irrelevant, then $Y'_w$ is irrelevant as well.
Indeed, the stabilizer of $x_w$ in $U'$ is the limit of the stabilizers $\on{Stab}_{U'}(x'_w)$ of
$x'_w\in Y'_w$ as $x'_w$ tends to $x_w$ (in particular, all these stabilizers have the same dimension).
Hence if $\psi$ is nontrivial on $\on{Stab}_{U'}(x_w)$, then it must be nontrivial on
$\on{Stab}_{U'}(x'_w)$ as well.

We conclude that any relevant orbit must be one of the following list:
$Y_{w_i},\ i=1,\ldots,8,\ Y'_{w_i},\ i=2,5,8$. It remains to study the relevance of
$Y'_{w_2},Y'_{w_5},Y'_{w_8}$. Clearly, the open orbit $Y'_{w_8}$ is relevant since the stabilizer of
any point there is trivial. For $Y'_{w_2}$, we choose $x'_{w_2}\in Y'_{w_2}$ as $u\cdot x_w$ for
a general $u\in U_{\PSp(6)}$. The Lie algebra of $\on{Stab}_{U'}(x_{w_2})=\on{Stab}_{U_{\PSp(6)}}(x_{w_2})$
is spanned by the root spaces $2\epsilon_3,\epsilon_1-\epsilon_2,\epsilon_1+\epsilon_3$.
After conjugation with a general $u\in U_{\PSp(6)}$ the root space $\epsilon_1-\epsilon_2$ will have
nontrivial projection onto the root space $\epsilon_1-\epsilon_3$, and hence $\psi$ will be nontrivial.
Thus $Y'_{w_2}$ is irrelevant.

Finally, for $Y'_{w_5}$ we also choose $x'_{w_5}\in Y'_{w_5}$ as $u\cdot x_w$ for
a general $u\in U_{\PSp(6)}$. The Lie algebra of $\on{Stab}_{U'}(x_{w_5})=\on{Stab}_{U_{\PSp(6)}}(x_{w_5})$
is spanned by the root space $\epsilon_2-\epsilon_3$. After conjugation with a general
$u\in U_{\PSp(6)}$ the root space $\epsilon_2-\epsilon_3$ will have
nontrivial projection onto the root space $\epsilon_2+\epsilon_3$, and hence $\psi$ will be nontrivial.
Thus $Y'_{w_5}$ is irrelevant.

All in all, we have~9 relevant orbits $Y'_{w_8},Y_{w_i},\ i=1,\ldots,8$.
We conclude that $\Lambda_\scrX$ has 9 irreducible components.

\subsection{Linear side}
\label{f4 linear}
We denote by $\CN\subset\scrX^\vee$ the moment map preimage $(\bmu^\svee)^{-1}(\CN_{\fg^\svee})$
of the nilpotent cone in $\fg^\vee\simeq(\fg^\vee)^*$.
We learned the statement of the following lemma from A.~Elashvili and M.~Jibladze.

\begin{lem}
  $G^\vee$ has~6 orbits in $\CN\colon \BO_{15},\BO_{13},\BO_{11},\BO_9,\BO_8,\BO_0$ such that
  $\dim\BO_i=i$.
\end{lem}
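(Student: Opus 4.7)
The plan is to classify all $G^\vee = \Spin(7)\times\SL(2)$-orbits on $\scrX^\vee = V\otimes W$, with $V = \BC^8_+$ the spin representation and $W = \BC^2_-$ the defining one, and then pick out those lying in $\CN$. Writing $w = (u,v)\in V\oplus V$ via the basis of $W$, the Gram matrix $M := \bigl(\begin{smallmatrix}(u,u) & (u,v) \\ (u,v) & (v,v)\end{smallmatrix}\bigr)$ is, under the identification $\Sym^2 W \cong \fsl(2)^*$, the $\fsl(2)$-component of the moment map, so $\mu_{\SL(2)}$ is nilpotent exactly when $\det M = 0$. A direct invariant-theoretic argument (combined with a dimension count) gives $\BC[\scrX^\vee]^{G^\vee} = \BC[\det M]$. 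Moreover, the $\fso(7)$-component of the moment map is automatically nilpotent on $\{\det M = 0\}$: it vanishes on rank-$1$ tensors, and for rank-$2$ tensors with $\det M = 0$ the element $u \wedge v \in \wedge^2 V = \fso(V)$ is nilpotent (its image lies in $\langle u,v\rangle$, which is at worst a rank-$1$-form subspace), and its projection to $\fso(7) \subset \wedge^2 V$ remains nilpotent by $\Spin(7)$-equivariance. Thus $\CN = \{\det M = 0\}$ is a hypersurface of dimension $15$.

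Next I would stratify $\CN$ by the tensor rank of $w \in V\otimes W$ and by the $\Spin(7)$-orbit type of the span $W_w := \langle u,v\rangle \subset V$. Four of the six orbits are then handled by direct stabilizer computations: $\BO_0 = \{0\}$ of dimension $0$; $\BO_8$ (tensor rank $1$ with $u$ isotropic, where the line-stabilizer of $\BC u$ in $\Spin(7)$ has dimension $15$ and the $\SL(2)$ contributes $1$ more); $\BO_9$ (tensor rank $1$ with $(u,u) \ne 0$, where the stabilizer is the $14$-dimensional ``$G_2$-like'' stabilizer of the non-isotropic line, plus $1$); and $\BO_{15}$ (tensor rank $2$ with $W_w$ of rank-$1$ form, normalized by $\SL(2)$ to $M = \diag(1,0)$, so $(u,u) = 1$ and $v$ is isotropic in $u^\perp \cong \BC^7$, on which $G_2 = \Spin(7)_u$ acts transitively on the $6$-dimensional isotropic cone, yielding a stabilizer of dimension $9$ and orbit of dimension $15$; this orbit is open and dense in $\CN$, which is consistent with $\CN$ being an irreducible hypersurface).

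The main obstacle is the remaining case of tensor rank $2$ with $W_w$ totally isotropic: I need to show that $\Spin(7)$ acts on the $9$-dimensional isotropic Grassmannian $\on{OG}(2,V)$ with exactly two orbits, of dimensions $9$ and $7$, whose $\Spin(7)$-stabilizers (of dimensions $12$ and $14$) both surject onto $\GL(W_w)$; combining with $\SL(2)$ then yields pair-orbits of dimensions $13$ and $11$, giving $\BO_{13}$ and $\BO_{11}$. My approach is through the $\Spin(7)$-invariant Cayley $4$-form (or equivalently the cross-product) on $V \cong \BO_\BC$: the closed $\Spin(7)$-orbit consists of those isotropic $2$-planes which are ``Cayley-compatible'' (closed under the trilinear cross-product up to scalars), the remaining generic isotropic $2$-planes forming the open orbit. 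As a cross-check one can fix an isotropic line $\ell \subset V$ with parabolic stabilizer $P_\ell$ of Levi $\GL(1)\times\Spin(5)$ and analyze the $P_\ell$-action on the $4$-dimensional quadric of isotropic directions in $\ell^\perp/\ell \cong \BC\oplus\BC^5$ (with $\Sp(4)\cong\Spin(5)$ acting as $\Lambda^2\BC^4$, giving two orbits), taking care when passing from the $\PP^1$-fibered bundle $X = \Spin(7) \times_{P_\ell} Q^4 \to \on{OG}(2,V)$ back to the Grassmannian to track the projection of each $\Spin(7)$-orbit on $X$. This dichotomy is the crux of the argument; once established, the full list of six orbits and their dimensions follows from the stabilizer computations already carried out.
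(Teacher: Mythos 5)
Your overall strategy coincides with the paper's: stratify $\CN$ by the tensor rank of $h$ and by the $\Spin(7)$-orbit of $\on{Im}(h)$, dispose of the orbits $\BO_0,\BO_8,\BO_9,\BO_{15}$ by direct stabilizer counts, and reduce the remaining rank-$2$ isotropic case to showing that $\Spin(7)$ has exactly two orbits on the isotropic Grassmannian $\on{OG}(2,\BC^8_+)$, of dimensions $7$ and $9$. That reduction, and the four easy orbits, match the paper's proof step for step (the identification $\CN=\{\det M=0\}$ is also what the paper uses, phrased as ``$h^*h\in\CN_{\fsl(2)}$''; your aside that the $\fso(7)$-component is then ``nilpotent by $\Spin(7)$-equivariance'' is not a valid deduction on its own, but the conclusion does follow from your invariant-theoretic remark $\BC[\scrX^\vee]^{G^\vee}=\BC[\det M]$).

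Where you diverge is in the crux that you yourself flag: the two $\Spin(7)$-orbits on $\on{OG}(2,\BC^8_+)$. The paper establishes this by triality: an outer automorphism of $\Spin(8)$ carries $\Spin(7)\subset\Spin(8)$ (acting on $\BC^8_+$ by the spin representation) to a copy $\Spin(7)'$ acting on $\BC^8$ as the \emph{vector} representation $\BC^7\oplus\BC$, and the two orbits are simply the isotropic planes contained in $\BC^7$ versus those transversal to $\BC^7$. Your two proposed routes are genuinely different and, as written, both have problems. The Cayley-form criterion (``isotropic $2$-planes closed under the trilinear cross-product'') is not well-posed: the $\Spin(7)$-invariant cross-product is a ternary operation, and a $2$-plane has no natural notion of being closed under it; you would need to formulate and verify a precise invariant (e.g.\ via contractions of the Cayley $4$-form along $\pi$). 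More concretely, your cross-check via the parabolic $P_\ell$ contains an error: the stabilizer in $\Spin(7)$ of an isotropic line in the \emph{spin} representation $\BC^8_+$ is the parabolic attached to the short simple root, whose Levi is $\GL(3)$, not $\GL(1)\times\Spin(5)$ (the latter is the Levi of the line-stabilizer in the $7$-dimensional \emph{vector} representation). Correspondingly, $\ell^\perp/\ell$ is $\BC^3\oplus\BC^{3*}$ as a $\GL(3)$-module, not the $\Lambda^2\BC^4$-picture you describe, so the subsequent $\Sp(4)\cong\Spin(5)$ analysis does not apply. Replacing your crux by the triality argument (or by a correctly executed parabolic reduction with the $\GL(3)$-Levi) would repair the proof; the rest of your proposal is sound and agrees with the paper.
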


\begin{proof}
For a point $h\in\scrX^\vee=\BC^8_+\otimes\BC^2_-\cong\Hom(\BC^2_-,\BC^8_+)$, the condition
$h\in\CN$ is equivalent to $h^*h\in\CN_{\fsl(2)}$. This in turn is equivalent to the condition
that the rank of the symmetric form $(\ ,\ )$ on $\BC^8_+$ restricted to $\on{Im}(h)$ is at most~1.

The orbit $\BO_0$ consists of the point $h=0$.
The orbit $\BO_8$ is formed by all the homomorphisms $h$ with 1-dimensional isotropic image.
The orbit $\BO_9$ is formed by all the homomorphisms $h$ with 1-dimensional non-isotropic image.
Indeed, the quadric $Q^6$ of all isotropic lines in $\BC^8_+$ forms a single $\Spin(7)$-orbit
isomorphic to the isotropic Grassmannian $\on{IGr}(3,7)$. The complement $\BP^7\setminus Q^6$
(formed by all non-isotropic lines) also forms a single $\Spin(7)$-orbit with stabilizer of
a point isomorphic to $\on{G}_2\subset\Spin(7)$.

We consider the set of all injective $h$ with isotropic image. It is a 4-dimensional vector
bundle over the 9-dimensional isotropic Grassmannian $\on{IGr}(2,8)$. The action of $\Spin(7)$ on
$\on{IGr}(2,8)$ has two orbits: the closed 7-dimensional one isomorphic to $\on{IGr}(2,7)$,
and the open complement. Indeed, the group $\on{Aut}(\Spin(8))$ of all automorphisms of $\Spin(8)$
acts on $\on{IGr}(2,8)$. An outer automorphism of triality group $\fS_3\subset\on{Aut}(\Spin(8))$
takes $\Spin(7)\subset\Spin(8)$ into another copy $\Spin(7)'\subset\Spin(8)$. The restriction
of the tautological 8-dimensional representation $\BC^8_+$ of $\Spin(8)$ to $\Spin(7)'$ is the
direct sum of its tautological 7-dimensional representation $\BC^7_+$ and the trivial 1-dimensional
one $\BC^1_+$.
Now the closed 7-dimensional orbit is formed by the isotropic planes contained in $\BC^7_+$,
and the complementary 9-dimensional orbit is formed by the isotropic planes transversal to
$\BC^7_+$. They give rise to the orbits $\BO_{11}$ and $\BO_{13}$.

It remains to consider the set of all injective $h$ such that $\on{rk}(\ ,\ )|_{\on{Im}(h)}=1$.
We choose a non-isotropic line $\ell\subset\on{Im}(h)$. Its stabilizer $\on{Stab}_{\Spin(7)}(\ell)$
is isomorphic to $\on{G}_2\subset\Spin(7)$. The action of $\on{G}_2$ on the quadric $Q^6$ of
isotropic lines in $\BC^8_+$ has two orbits: the closed one, isomorphic to a quadric $Q^5\subset Q^6$,
and the open one: $Q^6\setminus Q^5$. The kernel $K\subset\on{Im}(h)$ of $(\ ,\ )|_{\on{Im}(h)}$
cannot lie in $Q^6\setminus Q^5$: otherwise the form $(\ ,\ )$ will be {\em non}-degenerate
on $K\oplus\ell$. Hence all the choices of $K$ lie in a single $\on{G}_2$-orbit. It follows that
all the planes in $\BC^8_+$ such that the restriction of $(\ ,\ )$ to the plane has rank exactly~1,
form a single $\Spin(7)$-orbit of dimension~11. It gives rise to the open orbit $\BO_{15}\subset\CN$.
\end{proof}

Recall that we identify $(\fg^\svee)^*$ with $\fg^\svee=\fso(7)\oplus\fsl(2)$.
The nilpotent orbits in $\fso(7),\fsl(2)$ are indexed by their Jordan types.
Then $\bmu^\svee(\BO_0)=0=\BO_{(1^7)}\times\BO_{(1^2)},\ \bmu^\svee(\BO_8)=0=\BO_{(1^7)}\times\BO_{(1^2)},\ 
\bmu^\svee(\BO_9)=\BO_{(1^7)}\times\BO_{(2)},\ \bmu^\svee(\BO_{11})=\BO_{(2^2,1^3)}\times\BO_{(1^2)},\ 
\bmu^\svee(\BO_{13})=\BO_{(3,1^4)}\times\BO_{(1^2)},\ \bmu^\svee(\BO_{15})=\BO_{(3,2^2)}\times\BO_{(2)}.$

Thus $\BO_8$ forms a Lagrangian irreducible component of $(\bmu^\svee)^{-1}((\fb^\svee)^\perp)=
(\bmu^\svee)^{-1}(\fn^\svee)$ where $\fn^\svee$ stands for the nilpotent radical of $\fb^\svee$.
The fibers of $\bmu^\svee\colon\BO_9\to\BO_{(1^7)}\times\BO_{(2)}$ are connected 7-dimensional,
and the intersection $\BO_{(1^7)}\times\BO_{(2)}\cap\fn^\svee$ has a unique 1-dimensional component
(a punctured line in the nilpotent cone of $\fsl(2)$), so $\BO_9\cap(\bmu^\svee)^{-1}(\fn^\svee)$
forms a single Lagrangian component. The fibers of
$\bmu^\svee\colon\BO_{11}\to\BO_{(2^2,1^3)}\times\BO_{(1^2)}$ are 3-dimensional, and the intersection
$\BO_{(2^2,1^3)}\times\BO_{(1^2)}\cap\fn^\svee$ is 4-dimensional, so
$\BO_{11}\cap(\bmu^\svee)^{-1}(\fn^\svee)$ is 7-dimensional and does not contribute to the Lagrangian
components.

The fibers of $\bmu^\svee\colon\BO_{15}\to\BO_{(3,2^2)}\times\BO_{(2)}$ are connected 1-dimensional,
and the intersection $\BO_{(3,2^2)}\times\BO_{(2)}\cap\fn^\svee$ has three 7-dimensional components,
see e.g.~\cite[Table at page 235]{s}. Hence $\BO_{15}\cap(\bmu^\svee)^{-1}(\fn^\svee)$ contributes~3
Lagrangian components.

The fibers of $\bmu^\svee\colon\BO_{13}\to\BO_{(3,1^4)}\times\BO_{(2)}$ are 3-dimensional with~2
connected components. Indeed, a maximal reductive subgroup of
$\on{Stab}_{G^\vee}(y),\ y\in\BO_{(3,1^4)}\times\BO_{(2)}$, is $\on{Pin}(4)$. It acts simply transitively
on $\BO_{13}\cap(\bmu^\svee)^{-1}(y)$. The fundamental group $\pi_1(\BO_{(3,1^4)})\simeq\BZ/2\BZ$
permutes these two connected components. The intersection $\BO_{(3,1^4)}\times\BO_{(2)}\cap\fn^\svee$
has three 5-dimensional components, see e.g.~\cite[Table at page 235]{s}. In order to count
the Lagrangian components of $\BO_{13}\cap(\bmu^\svee)^{-1}(\fn^\svee)$, we reformulate our problem
as follows. Let $\CB^\vee$ be the flag variety of $G^\vee$ parametrizing all the possible choices
of $\fb^\svee\subset\fg^\svee$. Let \[\CM=\{(\fb^\svee\in\CB^\vee,\
x\in\BO_{13}\cap(\bmu^\svee)^{-1}(\BO_{(3,1^4)}\times\BO_{(2)}\cap\on{rad}\fb^\svee))\}.\]
We have the natural projections
\(\CB^\vee\xleftarrow{\varpi}\CM\xrightarrow{\pi}\BO_{(3,1^4)}\times\BO_{(2)}.\)
All the fibers of $\varpi$ are isomorphic to $\BO_{13}\cap(\bmu^\svee)^{-1}(\fn^\svee)$, and
$\CB^\vee$ is simply connected. Hence the number of irreducible components of 
$\BO_{13}\cap(\bmu^\svee)^{-1}(\fn^\svee)$ is equal to the number of irreducible components of $\CM$.
On the other hand, given $y\in\BO_{(3,1^4)}\times\BO_{(2)}$, the fiber $\pi^{-1}(y)$ is
isomorphic to the product $\on{Spr}_y\times(\BO_{13}\cap(\bmu^\svee)^{-1}(y))$, where
$\on{Spr}_y\subset\CB^\vee$ is the Springer fiber. It has~4 irreducible components
$C_1,C_2,C_3,C_4$, and $\pi_1(\BO_{(3,1^4)})=\BZ/2\BZ$ acts by permutation of $C_3$ and $C_4$,
see e.g.~\cite[Table at page 235]{s}. Recall that $\BO_{13}\cap(\bmu^\svee)^{-1}(y)$ has two
irreducible components $D_1,D_2$ that are permuted by $\pi_1(\BO_{(3,1^4)})$. Hence the fiber
$\pi^{-1}(y)$ has~8 irreducible components $C_i\times D_j$, and $\pi_1(\BO_{(3,1^4)})$ acts
diagonally on $\on{Irr}\pi^{-1}(y)$. We see that the action of $\pi_1(\BO_{(3,1^4)})$
on $\on{Irr}\pi^{-1}(y)$ has~4 orbits, and thus $\CM$ has~4 irreducible components as well.
We conclude that $\BO_{13}\cap(\bmu^\svee)^{-1}(\fn^\svee)$ contributes~4 Lagrangian components.

All in all, $(\bmu^\svee)^{-1}((\fb^\svee)^\perp)$ has~9 irreducible components. This completes the
proof of~Proposition~\ref{f4 lagr}. \hfill $\Box$

\section{$\GL(M|N)$}
\label{glmn}

\subsection{Setup}
Given $M<N$, we consider a nilpotent element $e$ of Jordan type $(N-M,1^M)$ in the Lie algebra
$\fgl(N)$. Let $\CS_e$ be a Slodowy slice to the nilpotent orbit $\BO_e\subset\fgl(N)$.
The centralizer of $e$ in $\GL(N)$ contains $\GL(M)$.
By the construction of~\cite[\S2]{l}, cf.~\cite[\S3.4]{bzsv}, the product
$\scrX_{M,N}:=\GL(N)\times\CS_e$ carries a symplectic structure and a hamiltonian action of
$G:=\GL(N)\times\GL(M)$. For $M=N$, we define $\scrX_{N,N}:=T^*\GL(N)\times T^*\BC^N$
(cotangent bundle of the tautological representation).
The action $G\circlearrowright\scrX_{M,N}$ is coisotropic, see e.g.~\cite[\S2.2]{fu}. It satisfies
the conditions~\cite[\S3.5.1]{bzsv} of hypersphericity.
Moreover, $\scrX_{M,N}$ is polarizable, i.e.\ $\scrX_{M,N}$ is a twisted cotangent bundle of the following
spherical $G$-variety $\scrY_{M,N}$.

For $M<N$, let $\on{P}_{M,N}\subset\GL(N)$ be the parabolic subgroup containing the upper triangular
Borel subgroup
and corresponding to the partition $(M+1,1^{N-M-1})$. It contains the upper left block $\GL(M)$.
The $\GL(M)$-invariant character $\psi$ of the unipotent radical $\on{U}_{M,N}$ of
$\on{P}_{M,N}$ corresponds to the same named character of its Lie algebra $\fu_{M,N}$ equal to
the sum of matrix elements $u_{M+1,M+2}+\ldots+u_{N-1,N}$. Hence $\psi$ extends to the character of
$H_{M,N}:=\GL(M)\ltimes\on{U}_{M,N}$. Then $\scrY_{M,N}=\GL(N)/\on{U}_{M,N}$, and $\scrX_{M,N}=T^*_\psi\scrY_{M,N}$.
For $M=N$, we set $\scrY_{M,N}=\GL(N)\times\BC^N$.

Note that $\scrY_{M,N}$ carries the (left) action of $\GL(N)$ and the (right) commuting action of $\GL(M)$.
We will view $H_{M,N}$ as a subgroup of $G$ (with respect to the diagonal embedding
$H_{M,N}\hookrightarrow\GL(N)\times\GL(M)$). Then $\scrY_{M,N}=G/H_{M,N}$ (for $M<N$).

The recipe of~\cite[\S4]{bzsv} produces from $G\circlearrowright\scrX_{M,N}$ the dual symplectic
variety $\scrX_{M,N}^\vee$ with a hamiltonian coisotropic action of $G^\vee\simeq G$.
Namely, $\scrX_{M,N}^\vee$ is a symplectic vector representation of $G^\vee$ equal to
$T^*\Hom(\BC^N,\BC^M)$.

Let $\fb\subset\fg=\on{Lie}G,\ \fb^\svee\subset\fg^\svee=\on{Lie}G^\vee$ be the upper-triangular
Borel subalgebras, and $B\subset G,\ B^\vee\subset G^\vee$ the corresponding Borel subgroups.
Let $\fb^\perp\subset\fg^*,\ (\fb^\svee)^\perp\subset(\fg^\svee)^*$ be their annihilators.
Let $\bmu\colon\scrX_{M,N}\to\fg^*,\ \bmu^\svee\colon\scrX_{M,N}^\vee\to(\fg^\svee)^*$ be the moment maps.
Let $\Lambda_{\scrX_{M,N}}=\bmu^{-1}(\fb^\perp),\ \Lambda_{\scrX_{M,N}^\vee}=(\bmu^\svee)^{-1}((\fb^\svee)^\perp)$.

Since $\scrX_{M,N}^\vee$ is the cotangent bundle to $\Hom(\BC^N,\BC^M)$, the zero level of the
moment map $\Lambda_{\scrX_{M,N}^\vee}$ is the union of conormal bundles to $B^\vee$-orbits in
$\Hom(\BC^N,\BC^M)$. These orbits are naturally indexed by the set $\fR_{M,N}$ of non-attacking
rooks placements in the $M\times N$-chessboard. Indeed, the matrices with 1's at the rooks'
positions and 0's everywhere else, form the set of representatives of $B^\vee$-orbits in
$\Hom(\BC^N,\BC^M)$ (row reduction). 

\begin{prop}
  \label{gl(M|N) lagr}
  Both $\Lambda_{\scrX_{M,N}}$ and $\Lambda_{\scrX_{M,N}^\vee}$ are Lagrangian subvarieties with irreducible
  components naturally indexed by $\fR_{M,N}$.
\end{prop}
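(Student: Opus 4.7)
The plan is to treat the two sides separately.

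On the dual side $\scrX_{M,N}^\vee = T^*V$ with $V = \Hom(\BC^N,\BC^M)$, the moment-map zero level $\Lambda_{\scrX_{M,N}^\vee}$ is automatically the union $\bigcup_\CO T^*_\CO V$ of conormal bundles to $B^\vee$-orbits on $V$, each of which is irreducible and Lagrangian of dimension $MN$. Combined with the parametrization of $B^\vee$-orbits on $\Hom(\BC^N,\BC^M)$ by $\fR_{M,N}$ already recalled in the setup (matrices with $1$'s at rook positions, via row/column reduction), this gives both the Lagrangian property and the bijection $\on{Irr}\Lambda_{\scrX_{M,N}^\vee}\simeq\fR_{M,N}$ at once.

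For $\Lambda_{\scrX_{M,N}}$ on the equivariant-slice side, I would exploit polarizability $\scrX_{M,N} = T^*_\psi\scrY_{M,N}$ to identify $\on{Irr}\Lambda_{\scrX_{M,N}}$ with the set of \emph{relevant} $B$-orbits on $\scrY_{M,N} = G/H_{M,N}$, i.e.\ those on which $\psi$ restricts trivially to the point-stabilizer; equivalently, with the relevant $H_{M,N}$-orbits on $\CB = G/B$, exactly as in the $F(4)$ analysis of \S\ref{f4 slice}. Finiteness of the relevant orbits (pseudosphericity) then delivers Conjecture~\ref{weak}(a) for $\scrX_{M,N}$ automatically, so everything reduces to matching these orbits with $\fR_{M,N}$. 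The base case $M=N$ is the classical mirabolic picture $\scrY_{N,N} = \GL(N)\times\BC^N$ with $B^{\GL(N)}\times B^{\GL(N)}$ acting by $(b_1,b_2)\cdot(g,v) = (b_1 g b_2^{-1},\, b_1 v)$; its $B$-orbits are indexed by $\fR_{N,N}$ (cf.~\cite{t}) and, since $\psi$ is trivial in this case, all of them are relevant. For $M<N$ I would project $\scrY_{M,N}$ to the partial flag variety $\GL(N)/\on{P}_{M,N}$, work fibrewise over Bruhat strata on the base, and combine a mirabolic-type enumeration on the Levi factor of $\on{P}_{M,N}$ with the Whittaker constraint imposed by the tail $u_{M+1,M+2}+\ldots+u_{N-1,N}$ of $\psi$. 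I expect the resulting list of relevant orbits to be naturally organised by the mazes $\fM_{M,N}$ introduced in the introduction, whose bijection with $\fR_{M,N}$ then yields the claim.

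The main obstacle is the relevance check for $M<N$, precisely as in \S\ref{f4 slice}. For each candidate Schubert-type piece one has to describe the $B$-stabilizer of a generic point, track how it behaves under a generic twist by $\on{U}_{M,N}$, and verify that $\psi$ vanishes on it exactly for the orbits labelled by rook placements. The maze formalism should keep the bookkeeping manageable, but the combinatorics is substantially richer than in the $F(4)$ example, and orbits that split or merge after generic conjugation will need to be inspected individually.
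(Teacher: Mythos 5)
Your argument on the dual side is exactly the paper's (conormal bundles to $B^\vee$-orbits on $\Hom(\BC^N,\BC^M)$, row-reduced to rook placements), and the reduction on the slice side to counting relevant $H_{M,N}$-orbits on $\CB$, equivalently relevant $H^M_N$-orbits on $\CB_N$ with $H^M_N=B_M\ltimes\on{U}$, is also the paper's framing. But the step you flag as the ``main obstacle'' --- the actual enumeration and relevance check for $M<N$ --- is precisely where your plan diverges from and is weaker than the paper's, and it is left as a sketch. You propose to fibre $\scrY_{M,N}$ over $\GL(N)/\on{P}_{M,N}$, work Bruhat-stratum by Bruhat-stratum, and do mirabolic bookkeeping on the Levi; this would force you, as in the $F(4)$ section, to analyse orbit splittings and generic stabilizers cell by cell, and you give no mechanism for controlling that.

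What the paper does instead, and what is missing from your proposal, is a single enabling observation that makes the relevance check elementary: $H^M_N$ sits inside $B^M_N:=\on{Stab}_{B_N}(v_{M+1})$, with $B^M_N/H^M_N\cong\BG_m^{N-M-1}$ connected, so $H^M_N\backslash\CB_N\to B^M_N\backslash\CB_N$ is a bijection. The right-hand side is identified with $B_N$-orbits on the auxiliary variety $\scrM^M_N=\{(F_\bullet,v): v\in F_{M+1}\setminus F_M\}\subset\CB_N\times\BC^N$, which is parametrized by colored permutations $RB^M_N$ via \cite[Lemma~2]{t}. One then picks torus-fixed representatives $A_{(w,\beta)}$; because they are fixed by the $\BG_m^{N-M-1}$ factor, the restriction of $\psi=\sum u_{M+1+k,M+2+k}$ to the stabilizer vanishes iff each summand does, i.e.\ iff $w(M+1)>\cdots>w(N)$, singling out $RB_{M,N}$. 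Finally Lemma~\ref{rho} gives the bijection $RB_{M,N}\simeq\fR_{M,N}$. Without the $H^M_N\hookrightarrow B^M_N$ step (which removes the extra torus and collapses the problem onto the flag-plus-vector picture of \cite{t}) and without Lemma~\ref{rho} (the combinatorial bridge from colored permutations to rook placements), your outline does not yet close; in particular, mazes do not appear directly in the slice-side count, only through $\varrho_{M,N}$ inside the proof of Lemma~\ref{rho}.
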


The proof will be given in~\S\ref{slice GL} after some combinatorial preparation in~\S\ref{comb}.

\subsection{Rooks, colored permutations and mazes}
\label{comb}
We have $\scrX_{M,N}^\vee=T^*\Hom(\BC^N,\BC^M)\cong T^*\Hom(\BC^M,\BC^N)$, and the set of
$B^\vee$-orbits in $\Hom(\BC^M,\BC^N)$ is also naturally indexed by $\fR_{M,N}$.
Given a rooks placement $\frr\in\fR_{M,N}$ we denote by $\Lambda_\frr\subset\scrX_{M,N}^\vee$
(resp.\ by $\Lambda'_\frr\subset\scrX_{M,N}^\vee$) the irreducible component of $\Lambda_{\scrX_{M,N}^\vee}$
equal to the closure of the conormal bundle to the $B^\vee$-orbit in $\Hom(\BC^N,\BC^M)$
(resp.\ in $\Hom(\BC^M,\BC^N)$) indexed by $\frr$. We have $\Lambda'_\frr=\Lambda_{\frr'}$ for a certain
rooks placement $\frr'$. In order to describe the bijection $\frr\mapsto\frr'$ (see~\S\ref{fourier}
below), we introduce another set $\fM_{M,N}$ in a natural bijection with $\fR_{M,N}$.
This set will be also necessary for the study of orthosymplectic case in~\S\ref{osp}.

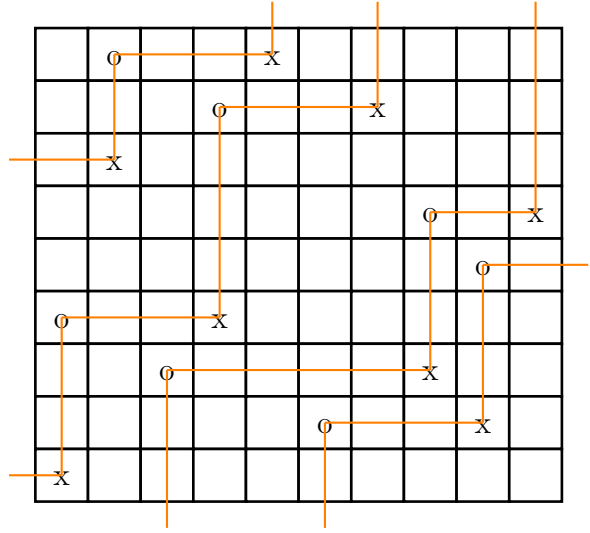
\begin{figure}[h]
\Yboxdim{1cm}
\begin{tikzpicture}[scale=.7]
  \Ystdtext1
  \Ylinethick{1pt}
\tyoung(0cm,0cm,\ o\ \ x\ \ \ \ \ ,\ \ \ o\ \ x\ \ \ ,\ x\ \ \ \ \ \ \ \ ,\ \ \ \ \ \ \ o\ x,\ \ \ \ \ \ \ \ o\ ,o\ \ x\ \ \ \ \ \ ,\ \ o\ \ \ \ x\ \ ,\ \ \ \ \ o\ \ x\ ,x\ \ \ \ \ \ \ \ \ )
\draw[orange,thick](4.5,.5)--(4.5,1.5);
\draw[orange,thick](1.5,.5)--(4.5,.5);
\draw[orange,thick](1.5,.5)--(1.5,-1.5);
\draw[orange,thick](1.5,-1.5)--(-0.5,-1.5);%
\draw[orange,thick](6.5,1.5)--(6.5,-0.5);
\draw[orange,thick](6.5,-0.5)--(3.5,-0.5);
\draw[orange,thick](3.5,-0.5)--(3.5,-4.5);
\draw[orange,thick](3.5,-4.5)--(0.5,-4.5);
\draw[orange,thick](0.5,-4.5)--(0.5,-7.5);
\draw[orange,thick](0.5,-7.5)--(-0.5,-7.5);%
\draw[orange,thick](9.5,1.5)--(9.5,-2.5);
\draw[orange,thick](9.5,-2.5)--(7.5,-2.5);
\draw[orange,thick](7.5,-2.5)--(7.5,-5.5);
\draw[orange,thick](7.5,-5.5)--(2.5,-5.5);
\draw[orange,thick](2.5,-5.5)--(2.5,-8.5);%
\draw[orange,thick](10.5,-3.5)--(8.5,-3.5);
\draw[orange,thick](8.5,-3.5)--(8.5,-6.5);
\draw[orange,thick](8.5,-6.5)--(5.5,-6.5);
\draw[orange,thick](5.5,-6.5)--(5.5,-8.5);
\end{tikzpicture}
\caption{A $9\times10$-maze}
\label{a maze}
\end{figure}

\begin{defn}
  \label{maze}
a) An $M\times N$-{\em maze} is a matrix (a chessboard) with $M$ rows and $N$ columns, equipped with a
collection of {\em walls}. Each wall is homeomorphic to an interval and consists of vertical or
horizontal
segments connecting the centers of boxes. Each wall ends (and starts) at the boundaries of the
chessboard. The different walls do not intersect. They satisfy the following requirements:

i) Each one of $N$ vertical lines through the centers of the boxes contains a vertical segment of
exactly one wall.  Each one of $M$ horizontal lines through the centers of the boxes contains a
horizontal segment of exactly one wall. 

ii) The right angle of any wall can occur only at the south-east (SE) corner (marked by x)
or at the north-west (NW) corner (marked by o). Hence the walls fall into the following four types:
type I ending at the left and upper boundaries of the chessboard; type II ending at the lower and
right boundaries of the chessboard; type III ending at the lower and upper boundaries of the
chessboard; type IV ending at the left and right boundaries of the chessboard. (Type III occurs
iff $N>M$, and type IV occurs iff $M>N$.)

The set of $M\times N$-mazes is denoted by $\fM_{M,N}$.

b) The involution $\iota\colon \fM_{M,N}\to \fM_{M,N}$ is obtained by the central symmetry
followed by replacing all x's by o's, and all o's by x's.
\end{defn}

We have a map $\pi_{M,N}\colon \fM_{M,N}\to \fR_{M,N}$ assigning to a maze the placement of rooks at boxes
marked with x.

\begin{lem}
  \label{pi MN}
  $\pi_{M,N}\colon \fM_{M,N}\to \fR_{M,N}$ is a bijection.
\end{lem}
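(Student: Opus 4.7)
The plan is to establish bijectivity by exhibiting an explicit inverse $\pi_{M,N}^{-1}\colon\fR_{M,N}\to\fM_{M,N}$ and verifying both compositions. The key structural observation, immediate from Definition~\ref{maze}, is that every wall is a monotone staircase: when one traverses a wall from its upper-right boundary endpoint to its lower-left one, one alternates southward and westward steps, and the resulting sequence of x's on the wall has strictly increasing rows and strictly decreasing columns. At each x the wall turns south-to-west, and at each o it turns west-to-south.

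Given $\frr\in\fR_{M,N}$, one places x at every rook position and infers the o's by tracing the walls. The starting ports are the top-column ports in columns containing a rook (the wall enters going south) and the right-row ports in rows containing no rook (the wall enters going west). A southward leg in a column is traced until it meets that column's rook (placing an x) or the bottom boundary; a westward leg in a row is traced until it reaches the column where a turn south is needed for the wall to eventually reach a boundary endpoint. That column, and hence the o position, is pinned down by the no-crossing requirement, since the next x to the south and the next leftward obstacle are already determined by the rook placement. By construction, the x's of the resulting maze are exactly the rook positions, so $\pi_{M,N}\circ\pi_{M,N}^{-1}=\on{id}$.

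For the converse $\pi_{M,N}^{-1}\circ\pi_{M,N}=\on{id}$, I would prove by induction on $M+N$ that any maze $\mathfrak{m}\in\fM_{M,N}$ is uniquely reconstructed from its x's. Peeling off the rightmost column, the wall containing column $N$'s rook (if any) either ascends straight to the top boundary or turns westward at a unique forced o in column $N$; in either case, deleting column $N$ and adjusting boundary ports yields a maze on the $M\times(N-1)$ board to which the inductive hypothesis applies. The main obstacle I expect is the rigorous justification of the ``forced o'' step, which requires a careful case analysis tracking how walls propagate across the peeled boundary, with particular care for walls of types III and IV and for rook-free rows and columns.
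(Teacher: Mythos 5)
Your strategy is genuinely different from the paper's, and it has an acknowledged gap that is not a minor technicality but the heart of the lemma. You propose to trace each wall individually from its boundary port, and to decide where a westward leg turns south by appealing to a ``no-crossing requirement''; you then say the converse composition requires a ``forced o'' case analysis that you expect to be the main obstacle. That is exactly the step where the proof lives: the walls are subject to the \emph{global} constraints (i) of Definition~\ref{maze} (every vertical and every horizontal centre-line must be covered by exactly one wall), and a purely local, greedy tracing does not obviously produce a configuration satisfying them, nor is it obviously well-defined. As written, both well-definedness of $\varpi_{M,N}$ and $\varpi_{M,N}\circ\pi_{M,N}=\mathrm{id}$ are asserted rather than shown.

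The paper avoids the wall-by-wall analysis entirely. It augments the board with an extra row $M+1$ and an extra column $N+1$, places auxiliary x's there so that \emph{every} row and \emph{every} column of the augmented board now contains exactly one x, and then iteratively peels off the boundary of the NW envelope of the current x-set (erasing the x's that are SE corners of that envelope, i.e.\ the extremal ones). Each peel produces one monotone staircase wall, and because every row/column of the augmented board is hit exactly once by an x, the resulting family of walls automatically satisfies constraint (i). This global construction makes both well-definedness and the two inverse compositions immediate, sidestepping your ``forced o'' difficulty. If you want to salvage your induction on $M+N$, the natural fix is to peel off the \emph{outermost wall} (the NW-envelope boundary) rather than the rightmost column: that wall is determined by the rook placement alone via the envelope, and after removing the rows and columns it occupies you land on a strictly smaller board.
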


\begin{proof}
We have to construct the inverse bijection $\varpi_{M,N}\colon \fR_{M,N}\to \fM_{M,N}$. To this end we add
the $M+1$-st row of length $N$ to the bottom of our chessboard, and also add the $N+1$-st column
of length $M$ to the right of our chessboard. Given a placement of rooks (an element of $\fR_{M,N}$),
we put x's into the boxes with rooks and also into all the
boxes of the last (added) row (resp.\ column) that are not in the vertical (resp.\ horizontal) lines
containing x's in the original $M\times N$ chessboard. The resulting set of x's is denoted $X_1$.
Now we consider the {\em NW envelope} $NW(X_1)$ of $X_1$: the union of north-west quadrants with
vertices in all the x's. The intersection of the boundary of $NW(X_1)$ with the original
$M\times N$-chessboard is the first wall $W_1$.

Then we erase all the x's in the south-east corners of $NW(X_1)$. The remaining set of
x's is denoted $X_2$. The intersection of the boundary of $NW(X_2)$ with the original
$M\times N$-chessboard is the second wall $W_2$. We erase all the x's in the south-east corners of
$NW_2$ to obtain $X_3$, and continue like this till there are no more x's. The resulting set of
walls $W_1,W_2,\ldots$ is the desired maze in $\fM_{M,N}$.
\end{proof}

Another bijection $\fM_{M,N}\iso \fR_{M,N}$ assigns to a maze the placement of rooks at boxes marked with o.
We will also need the following ``mixed'' versions of these bijections.
Assume our $M\times N$-chessboard is cut into two by a horizontal cut (so that $M=M_u+M_d$) or
by a vertical cut (so that $N=N_l+N_r$). The map $\pi_{M,N_l,N_r}\colon \fM_{M,N}\to \fR_{M,N}$ assigns to a
maze the placement of rooks at boxes marked with x in the left $M\times N_l$-chessboard and rooks
marked with o in the right $M\times N_r$-chessboard. The map $\pi_{M_u,M_d,N}\colon \fM_{M,N}\to \fR_{M,N}$
assigns to a maze the placement of rooks at boxes marked with x in the upper $M_u\times N$-chessboard
and rooks marked with o in the lower $M_d\times N$-chessboard. 

\begin{lem}
  \label{pi MNN}
  \textup{(a)} $\pi_{M,N_l,N_r}\colon \fM_{M,N}\to \fR_{M,N}$ is a bijection.

  \textup{(b)} $\pi_{M_u,M_d,N}\colon \fM_{M,N}\to \fR_{M,N}$ is a bijection.
\end{lem}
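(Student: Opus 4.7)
Our plan is to mimic the proof of \lemref{pi MN}, exhibiting explicit inverse maps $\varpi_{M,N_l,N_r}\colon\fR_{M,N}\to\fM_{M,N}$ and $\varpi_{M_u,M_d,N}\colon\fR_{M,N}\to\fM_{M,N}$. We sketch the construction for part~(b); part~(a) would be handled analogously after swapping the roles of rows and columns (and cutting vertically between columns $N_l$ and $N_l{+}1$ instead of horizontally between rows $M_u$ and $M_u{+}1$).

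Given $R\in\fR_{M,N}$, the first step would be to mark each rook in the upper $M_u$ rows as an $x$ (SE corner) and each rook in the lower $M_d$ rows as an $o$ (NW corner). Following the proof of \lemref{pi MN}, we would augment the board with an $(M{+}1)$-st row and $(N{+}1)$-st column on the south-east, populated by $x$'s in the columns/rows of the upper strip that do not yet contain a rook; dually (exploiting the involution~$\iota$ of \defnref{maze}), we would augment with a $0$-th row and a $0$-th column on the north-west populated by $o$'s. On the upper strip together with its SE augmentation, one would iteratively peel off NW envelopes of the current $x$-set, producing a sequence of ``upper pieces'' of walls, each of which carries its SE corners at the extremal $x$'s of the envelope and its NW corners at the concave turns of its boundary. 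Dually, on the lower strip together with its NW augmentation, we would iteratively peel off SE envelopes of the current $o$-set to produce ``lower pieces.'' Finally, we would glue the upper and lower pieces across the horizontal dividing line between rows $M_u$ and $M_u{+}1$ to form the maze $\varpi_{M_u,M_d,N}(R)$.

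The principal obstacle --- and the only step that is not a direct transcription of \lemref{pi MN} --- is to verify that upper and lower pieces glue consistently into a bona fide element of $\fM_{M,N}$. Concretely, one must show that the set of columns in which an upper piece exits through the dividing line (going south into row $M_u{+}1$) coincides with the set of columns in which a lower piece enters it (again going south), and that the resulting glued paths satisfy axioms (i)--(ii) of \defnref{maze}. The key input is the non-attacking rook property of $R$ combined with the symmetric setup of the SE and NW augmentations, which together force a canonical bijection between ``exits from above'' and ``entries from below.'' Once this matching is in place, the two identities $\pi_{M_u,M_d,N}\circ\varpi_{M_u,M_d,N}=\on{id}$ and $\varpi_{M_u,M_d,N}\circ\pi_{M_u,M_d,N}=\on{id}$ follow by tracing $x$'s and $o$'s through the envelope procedure exactly as in \lemref{pi MN}, yielding the bijections claimed in both (a) and (b).
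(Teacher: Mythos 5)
Your overall strategy---constructing an explicit inverse $\varpi_{M_u,M_d,N}$ by an envelope-peeling procedure modelled on that of Lemma~\ref{pi MN}---is the same as the paper's, but the actual construction you propose differs from the paper's, and the point you yourself identify as ``the principal obstacle'' (gluing upper and lower wall pieces consistently across the cut) is left as an unproved assertion. Saying that ``the non-attacking rook property combined with the symmetric setup of the SE and NW augmentations forces a canonical bijection between exits and entries'' is precisely the content that needs a proof; as written, your two strips are built from the upper rooks and the lower rooks independently, and there is no visible mechanism synchronizing the columns in which an upper wall crosses the cut with the columns in which a lower wall does. That synchronization is not automatic and is in fact the whole difficulty.

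The paper resolves this by a different device which makes the gluing essentially trivial. Rather than symmetrically augmenting both strips, the paper first \emph{deletes} from the left sub-board every row occupied by a rook in the right sub-board, and vice versa, obtaining reduced placements $\frr_l$ and $\frr_r$ on smaller boards. It then applies the already-constructed $\varpi$ from Lemma~\ref{pi MN} to $\frr_l$, applies $\iota\,\varpi\,\varsigma$ (a central-symmetry conjugate) to $\frr_r$, and finally \emph{restitutes} the deleted rows by extending vertical wall segments straight through them. With this set-up, each row either (i) contains a rook in exactly one half, in which case its horizontal wall segment lies entirely in that half and never reaches the cut, or (ii) contains no rook at all, in which case it is present in both reduced boards and the wall is forced to cross the cut exactly there. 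The matching you were hoping to conjure out of symmetry is thereby built in by construction. I'd suggest you either adopt this row-removal step, or else spell out a precise argument for why your symmetric-augmentation version forces the exits and entries at the cut to agree---as it stands, that step is a gap, not a proof.
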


\begin{proof}
We prove (a), the proof of (b) being absolutely similar. 
We construct the inverse bijection $\varpi_{M,N_l,N_r}\colon \fR_{M,N}\to \fM_{M,N}$.
Given a placement of rooks (an element of $\fR_{M,N}$), we consider its left part (a placement of
rooks in the left $M\times N_l$-chessboard), and its right part (a placement of
rooks in the right $M\times N_r$-chessboard). We remove from the left chessboard (temporarily)
all the rows occupied by the rooks in the right chessboard. We also remove from the right chessboard
(temporarily) all the rows occupied by the rooks in the left chessboard. We denote the resulting
chessboards with the rooks placements $\frr_l$ and $\frr_r$. We consider the maze $\varpi(\frr_l)$ in the
left chessboard. In the right chessboard we consider the maze $\iota\varpi(\varsigma \frr_r)$ where
$\varsigma \frr_r$ stands for the centrally symmetric rooks placement, $\iota$ was defined
in~\ref{maze}b), and $\varpi$ was considered in the proof of~Lemma~\ref{pi MN}.

Now we restitute the rows deleted from the left and right chessboards, and extend (the vertical
parts of the walls of) our mazes through the restituted rows in an obvious way. The resulting left
and right mazes form the desired single $M\times N$-maze. Note that this maze intersects the vertical
cut (between the columns with numbers $N_l,N_{l+1}$) in all the rows that contain rooks
neither in the left chessboard, nor in the right one.
\end{proof}

We denote by $\fM^\iota_{M,N}$ the set of $M\times N$-mazes invariant with respect to the
involution $\iota$. Note that $\fM^\iota_{M,N}$ is empty if both $M$ and $N$ are odd.

\begin{lem}
  \label{symmetric mazes}
  \textup{(a)} $\sharp \fM^\iota_{2m,2n}=\sharp \fM^\iota_{2m,2n+1}$.

  \textup{(b)} $\sharp \fM^\iota_{2m,2n}=\sharp \fM^\iota_{2m+1,2n}$.
\end{lem}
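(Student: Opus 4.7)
The plan is to reduce both equalities to a single count of centrally symmetric rook placements. Let $\varsigma\colon(i,j)\mapsto(M+1-i,N+1-j)$ denote the central symmetry of the $M\times N$-chessboard, and write $\fR^\varsigma_{M,N}\subset\fR_{M,N}$ for the subset of $\varsigma$-invariant (non-attacking) rook placements.

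First I would upgrade the bijections of~\lemref{pi MNN} to identifications $\pi_{m,m,N}\colon\fM^\iota_{2m,N}\iso\fR^\varsigma_{2m,N}$ and $\pi_{M,n,n}\colon\fM^\iota_{M,2n}\iso\fR^\varsigma_{M,2n}$. The point is that for any maze $T\in\fM_{2m,N}$, the upper x's of $\iota T$ are by definition the $\varsigma$-images of the lower o's of $T$, and symmetrically the lower o's of $\iota T$ are the $\varsigma$-images of the upper x's of $T$; hence $\pi_{m,m,N}(\iota T)=\varsigma\cdot\pi_{m,m,N}(T)$. Since $\pi_{m,m,N}$ is a bijection, this forces $\iota T=T$ if and only if $\pi_{m,m,N}(T)$ is $\varsigma$-invariant. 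The same argument applied to $\pi_{M,n,n}$ yields the column-split version.

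Given this, part~(a) reduces to $\sharp\fR^\varsigma_{2m,2n+1}=\sharp\fR^\varsigma_{2m,2n}$. In a $\varsigma$-invariant rook placement on the $2m\times(2n+1)$-chessboard, the central column $n+1$ is $\varsigma$-stable, and a rook at $(i,n+1)$ would force a second rook at $(2m+1-i,n+1)$ in the same column, contradicting the non-attacking condition (the equation $i=2m+1-i$ has no integer solution). Hence the central column is empty, and deletion of this column, inverse to insertion of an empty central column, produces the required bijection $\fR^\varsigma_{2m,2n+1}\iso\fR^\varsigma_{2m,2n}$. Part~(b) is established identically: a $\varsigma$-invariant placement on the $(2m+1)\times 2n$-chessboard has empty central row by the same parity argument, and deletion yields $\fR^\varsigma_{2m+1,2n}\iso\fR^\varsigma_{2m,2n}$.

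The only delicate step is the identity $\pi_{m,m,N}(\iota T)=\varsigma\cdot\pi_{m,m,N}(T)$, but this follows immediately by tracking how $\iota$ moves individual x's and o's between the upper and lower halves of the chessboard, so I do not anticipate any real obstacle.
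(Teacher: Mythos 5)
Your proof is correct, and it takes a genuinely cleaner route than the paper's. Both you and the paper rely on the ``mixed'' bijections of Lemma~\ref{pi MNN}, but you make essential use of the observation that a \emph{symmetric} cut intertwines $\iota$ with the central symmetry $\varsigma$ on rook placements: $\pi_{m,m,N}(\iota T)=\varsigma\cdot\pi_{m,m,N}(T)$ (and likewise for $\pi_{M,n,n}$). For part~(a), where $M=2m$ is even but $N$ changes parity, you accordingly choose the horizontal cut $\pi_{m,m,N}$, which remains symmetric for \emph{both} $N=2n$ and $N=2n+1$; this identifies both $\fM^\iota_{2m,2n}$ and $\fM^\iota_{2m,2n+1}$ with the centrally symmetric rook sets $\fR^\varsigma_{2m,2n}$ and $\fR^\varsigma_{2m,2n+1}$, which are trivially in bijection by deleting/inserting the forced-empty middle column. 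The paper instead uses the vertical cuts $\pi_{2m,n,n}$ and $\varpi_{2m,n+1,n}$: the latter is necessarily asymmetric for $N=2n+1$, so the paper's bijection must be constructed by cases (depending on whether the maze splits as a disjoint union), with an ad hoc rule for placing a rook in the inserted middle column. Your choice of cut direction eliminates that case analysis entirely and makes the parity argument (no rook can sit on the symmetric center line of an even-sized board) do all the work. The same observation applies to part~(b) via the vertical cut $\pi_{M,n,n}$ with $N=2n$ fixed. In short, your proof buys a uniform reduction to a one-line combinatorial fact about centrally symmetric non-attacking rooks, at the cost of nothing.
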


\begin{proof}
We prove (a), the proof of (b) being absolutely similar. Namely, we define a bijection
$\fM^\iota_{2m,2n}\iso \fM^\iota_{2m,2n+1}$. If a maze in $\fM^\iota_{2m,2n}$ is a disjoint
union of two mazes in $\fM_{2m,n}$ (left half and the right half of the $2m\times2n$-chessboard),
then we just insert the column in the middle and add the vertical wall of type III in this column.

Otherwise, we apply the bijection $\pi_{2m,n,n}$ of~Lemma~\ref{pi MNN} to our maze in
$\fM^\iota_{2m,2n}$ and obtain a centrally
symmetric placement of rooks in the $2m\times2n$-chessboard. We insert the column in the middle,
and place a rook in this column in the highest row among $m+1,\ldots,2m$ unoccupied by the
pre-existing rooks. (Note that if all the rows are already occupied, then we are in the situation
of the previous paragraph.) Then we apply the bijection $\varpi_{2m,n+1,n}$ of~Lemma~\ref{pi MNN} to
the resulting rooks placement.
\end{proof}

\begin{figure}[h]
\Yboxdim{1cm}
\begin{tikzpicture}[scale=.7]
  \Ystdtext1
  \Ylinethick{1pt}
\tyoung(0cm,0cm,o\ x\ ,x\ \ \ ,\ \ \ o,\ o\ x)
\draw[orange,thick](2.5,.5)--(2.5,1.5);
\draw[orange,thick](.5,.5)--(2.5,.5);
\draw[orange,thick](.5,.5)--(.5,-0.5);
\draw[orange,thick](.5,-0.5)--(-0.5,-0.5);%
\draw[orange,thick](4.5,-1.5)--(3.5,-1.5);
\draw[orange,thick](3.5,-1.5)--(3.5,-2.5);
\draw[orange,thick](3.5,-2.5)--(1.5,-2.5);
\draw[orange,thick](1.5,-2.5)--(1.5,-3.5);
\tyoung(5cm,0cm,\ \ \ \ \ ,x\ \ \ \ ,\ \ \ \ o,\ \ x\ \ )
\draw[blue,thick](8,1)--(8,-3);
\draw[black,dotted](5,-1.5)--(8,-1.5);
\draw[black,dotted](8,-0.5)--(10,-0.5);
\draw[black,dotted](8,-2.5)--(10,-2.5);
\tyoung(11cm,0cm,\ \ ox\ ,x\ \ \ \ ,\ \ \ \ o,\ ox\ \ )
\draw[orange,thick](11.5,-0.5)--(11.5,1.5);
\draw[orange,thick](11.5,-0.5)--(10.5,-0.5);%
\draw[orange,thick](15.5,-3.5)--(15.5,-1.5);
\draw[orange,thick](15.5,-1.5)--(16.5,-1.5);%
\draw[orange,thick](14.5,1.5)--(14.5,.5);
\draw[orange,thick](14.5,.5)--(13.5,.5);
\draw[orange,thick](13.5,.5)--(13.5,-2.5);
\draw[orange,thick](13.5,-2.5)--(12.5,-2.5);
\draw[orange,thick](12.5,-2.5)--(12.5,-3.5);
\end{tikzpicture}
\caption{$\fM_{4,4}^\iota\iso\fM_{4,5}^\iota$}
\label{bijection}
\end{figure}

Recall the notion of colored permutations~\cite[\S3.1]{t}.
Namely, a colored permutation $(w,\beta)$ is a pair consisting of a permutation $w\in\fS_N$ and
a subset $\beta\subset\{1,\ldots,N\}$ such that if $j\in\beta$ (is blue), and
$i\in\{1,\ldots,N\}\setminus\beta$ (is red), then either $i>j$ or $w(i)>w(j)$.
The set of colored (into red and blue)
permutations of $\{1,\ldots,N\}$ is denoted $RB_N$. We construct a map $\phi\colon \fM_{N,N}\to RB_N$
as follows.
Given an $N\times N$-maze we define the following colored permutation. Its graph (a collection of
boxes in the $N\times N$-chessboard) is the union of all the south-east corners (x's) of all the walls
of type I and of all the north-east corners (o's) of all the walls of type II. (Note that there
are no walls of types III,IV since $M=N$.) The blue set $\beta$ coincides with the set of x's.
It is immediate that the resulting pair $(w,\beta)$ satisfies the above conditions for a colored
permutation.

\begin{lem}
  $\phi\colon \fM_{N,N}\to RB_N$ is a bijection.
\end{lem}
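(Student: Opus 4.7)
The plan is to construct an explicit inverse $\phi^{-1}\colon RB_N \to \fM_{N,N}$, mimicking the NW-envelope construction in the proof of \lemref{pi MN}. Given $(w,\beta)\in RB_N$, I first place an x at $(i,w(i))$ for every $i\in\beta$ (blue) and an o at $(i,w(i))$ for every $i\notin\beta$ (red); since $w$ is a permutation, every row and every column of the $N\times N$-chessboard carries exactly one of these marks.

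I then assemble the type I walls from the blue x's by iterating the NW-envelope procedure of \lemref{pi MN}: take the NW envelope of the current pool of blue x's, let the first type I wall be the SE staircase boundary of this envelope (intersected with the chessboard), with its intermediate SE-to-NW turning points decorated as extra o's; remove the blue x's that landed on this wall, and repeat. This groups the blue x's into SW-staircases $(i_1,c_1),\dots,(i_s,c_s)$ with $i_1<\cdots<i_s$ and $c_1>\cdots>c_s$, one staircase per type I wall. The centrally symmetric procedure (SE envelopes and NW staircase boundaries) applied to the red o's produces the type II walls, with extra x's at their intermediate corners. Each resulting wall has the correct shape by construction.

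Once the walls are assembled, I would verify the axioms of Definition~\ref{maze} and that $\phi$ and $\phi^{-1}$ are mutually inverse. The coverage condition (each of the $N$ vertical/horizontal lines meets exactly one wall) is immediate from the staircase structure together with the fact that every row and column carries a graph mark. The equality $\phi\circ\phi^{-1}=\mathrm{id}$ is built into the construction. For $\phi^{-1}\circ\phi=\mathrm{id}$, the key point is that in any maze the actual decomposition of blue x's into type I walls coincides with the greedy NW-envelope decomposition, by the same argument as in \lemref{pi MN}, and likewise for red o's.

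The main obstacle is verifying that the walls produced by the two separate envelope procedures do not intersect. Crossings within the type I family (or within the type II family) are ruled out by the envelope construction, exactly as in \lemref{pi MN}. The delicate case is the mixed one, where a type I wall $W$ and a type II wall $W'$ might meet at a box $(r,c)$. Such a crossing forces a blue x at some $(j,c)$ with $j\geq r$ (so that the vertical of $W$ in column $c$ reaches row $r$) and a red o at some $(r,c_o)$ with $c_o\leq c$ (so that the horizontal of $W'$ in row $r$ reaches column $c$); the graph being a permutation forces $r<j$. The defining condition of $RB_N$, which places every red o strictly east of all blue x's in later rows, then gives $c_o=w(r)>w(j)=c$, contradicting $c_o\leq c$. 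The symmetric case (a horizontal of $W$ meeting a vertical of $W'$) is analogous, and this combinatorial input is the crux of the lemma.
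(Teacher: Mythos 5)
Your proposal is correct and follows the same inverse construction as the paper's own proof: both build $\varphi\colon RB_N\to\fM_{N,N}$ by iterating NW envelopes on the blue marks and SE envelopes on the red marks. You additionally supply the verification, which the paper leaves implicit, that the two resulting families of walls do not cross; your reduction of the mixed-crossing case to the defining constraint on $(w,\beta)\in RB_N$ is sound and is exactly the point a careful reader would need to check.
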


\begin{proof}
Here is the inverse bijection $\varphi\colon RB_N\to \fM_{N,N}$. Given a colored permutation $(w,\beta)$,
let $B_1$ (resp.\ $R_1$) be the set of blue (resp.\ red) boxes in the graph of $w$.
We take the NW envelope $NW(B_1)$ and its boundary $W_1$, then erase all the blue boxes in the
south-east corners of $NW(B_1)$ to obtain $B_2$. We continue like this till there are no more blue
boxes. We obtain the set of walls $W_1,W_2,\ldots$ of type I.

Similarly, we take the SE envelope $SE(R_1)$ and its boundary $W'_1$, then erase all the red boxes
in the north-west corners of $SE(R_1)$ to obtain $R_2$. We continue like this till there are no more
red boxes. We obtain the set of walls $W'_1,W'_2,\ldots$ of type II. The desired maze is the union
of walls $W_1,W'_1,W_2,W'_2,\ldots$
\end{proof}

The following corollary is immediate from the construction of bijection $\phi$.

\begin{cor}
  \label{symp mirab}
  The bijection $\phi\colon \fM_{N,N}\to RB_N$ intertwines the involution
  $\iota\colon \fM_{N,N}\to \fM_{N,N}$ and the involution $\on{F}\colon RB_N\to RB_N$
  of~\cite[Proposition 11]{t}.
\end{cor}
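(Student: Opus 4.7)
The plan is to unwind the three definitions ($\phi$, $\iota$, and $\on{F}$) and check that the two involutions agree on the nose. The first step, and the only one requiring any thought, is to analyze how $\iota$ acts on the individual walls of a maze. Central symmetry of the $N\times N$-chessboard exchanges the left and right boundaries and exchanges the upper and lower boundaries, so a wall of type I (ending at the left and upper boundary) is sent to a wall of type II (ending at the right and lower boundary), and vice versa. Central symmetry also sends SE corners to NW corners, and the subsequent $x\leftrightarrow o$ relabeling then restores the maze convention that SE corners carry $x$ and NW corners carry $o$. Consequently, the SE/$x$ corners of type I walls in $\fm$ are carried, via central symmetry, to the NW/$o$ corners of type II walls in $\iota(\fm)$, and symmetrically the NW/$o$ corners of type II walls in $\fm$ go to the SE/$x$ corners of type I walls in $\iota(\fm)$.

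The second step is to apply $\phi$. By construction, the graph of $\phi(\fm)$ is the union of the SE/$x$ corners of the type I walls (this subset is the blue set $\beta$) and the NW/$o$ corners of the type II walls (the red complement). Combining with the previous paragraph, the graph of $\phi(\iota(\fm))$ is the centrally symmetric image of the graph of $\phi(\fm)$, with the roles of blue and red interchanged.

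The third step is to compare this with the involution $\on{F}\colon RB_N\to RB_N$ of~\cite[Proposition 11]{t}, which is defined by precisely the same recipe: apply central symmetry to the graph of the coloured permutation and swap the two colours. Hence $\phi\circ\iota=\on{F}\circ\phi$, which is the claim.

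The only real obstacle is the bookkeeping in step one: one must verify carefully that the $x\leftrightarrow o$ swap is exactly what is needed to realign the centrally-symmetrized corners with the maze labelling convention, and that this correctly exchanges the two wall types. Once this is in place, steps two and three are purely formal, which is presumably the sense in which the corollary is ``immediate from the construction of~$\phi$.''
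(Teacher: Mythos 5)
Your proof is correct and follows exactly the route the paper has in mind: the paper simply declares the corollary ``immediate from the construction of $\phi$,'' and your three-step unwinding (central symmetry exchanges wall types I and II and swaps SE/NW corners; the $x\leftrightarrow o$ relabeling realigns with the maze convention; hence $\phi(\iota(\fm))$ is $\phi(\fm)$ with graph centrally reflected and colours swapped, which is $\on{F}$) is precisely the bookkeeping being elided. Nothing is missing, provided one accepts your recalled description of $\on{F}$ from~\cite[Proposition~11]{t} as central symmetry of the graph together with a colour swap, which is the intended reading.
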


For $M<N$, we denote by $RB^M_N\subset RB_N$ the set of all colored permutations $(w,\beta)\in RB_N$
such that $M+1$ is colored in blue, while $M+2,\ldots,N$ are all colored in red.
We denote by $RB_{M,N}\subset RB^M_N$ the set of all colored permutations $(w,\beta)\in RB^M_N$
such that $w(M+1)>w(M+2)>\ldots>w(N)$. We construct a map $\rho_{M,N}\colon RB_{M,N}\to \fR_{M,N}$ as
follows. First we consider the permutation $w'$ obtained from $w$ by rearranging the last
$N-M$ positions: $w'(K):=w(K),\ 1\leq K\leq M;\ w'(M+k):=w(N+1-k),\ 1\leq k\leq N-M$. Second,
we consider the coloring $\beta'$ obtained from $\beta$ by changing all the colors of $M+2,\ldots,N$
from red to blue. Third, we consider the rooks placement $\pi_{N,N}\varphi(w',\beta')\in \fR_{N,N}$.
Finally, we delete all the rooks in the rows $M+1,\ldots,N$, and obtain the desired rooks
placement in $\fR_{M,N}$.

\begin{lem}
  \label{rho}
  $\rho_{M,N}\colon RB_{M,N}\to \fR_{M,N}$ is a bijection.
\end{lem}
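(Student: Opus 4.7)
The plan is to construct an explicit two-sided inverse $\sigma\colon\fR_{M,N}\to RB_{M,N}$ of $\rho_{M,N}$. Three of the four steps of $\rho_{M,N}$---the rearrangement $(w,\beta)\mapsto(w',\beta')$, the passage through $\varphi$ (the inverse of $\phi\colon\fM_{N,N}\iso RB_N$), and the bijection $\pi_{N,N}$ of Lemma~\ref{pi MN}---are readily invertible. The real task is to invert the final deletion of rooks in rows $M+1,\ldots,N$.

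To this end I first identify the subset $\widetilde{\fR}_{M,N}\subseteq\fR_{N,N}$ that is the image of $\pi_{N,N}\varphi$ applied to the image of the rearrangement. Starting from $(w,\beta)\in RB_{M,N}$, the rearrangement produces $(w',\beta')$ satisfying $\{M+1,\ldots,N\}\subseteq\beta'$ and $w'(M+1)<w'(M+2)<\cdots<w'(N)$. Unpacking these constraints through $\varphi$ and $\pi_{N,N}$, $\widetilde{\fR}_{M,N}$ should consist of those rooks placements in $\fR_{N,N}$ whose rows $M+1,\ldots,N$ each carry a rook, with column positions in these rows strictly increasing from row $M+1$ down to row $N$.

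The inverse $\sigma$ is then defined by the following recipe: given $\mathfrak{r}\in\fR_{M,N}$, canonically extend it to $\tilde{\mathfrak{r}}\in\widetilde{\fR}_{M,N}$ by appending $N-M$ rooks in rows $M+1,\ldots,N$, apply $\phi\pi_{N,N}^{-1}$ to extract $(w',\beta')$, and invert the rearrangement to obtain $(w,\beta)$. The canonical extension amounts to placing the added rooks at the columns required to fulfil the increasing-column constraint, a local procedure along the horizontal cut between rows $M$ and $M+1$ analogous to the proof of Lemma~\ref{pi MNN}. The main obstacle is verifying that this extension is well-defined and unique, i.e., that every $\mathfrak{r}\in\fR_{M,N}$ admits exactly one extension to an element of $\widetilde{\fR}_{M,N}$; once this uniqueness is established, the identities $\sigma\circ\rho_{M,N}=\on{id}$ and $\rho_{M,N}\circ\sigma=\on{id}$ follow immediately from the construction.
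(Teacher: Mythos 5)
Your plan — to invert $\rho_{M,N}$ by first identifying the image $\widetilde{\fR}_{M,N}\subset\fR_{N,N}$ of $\pi_{N,N}\varphi$ composed with the rearrangement, and then showing each $\frr\in\fR_{M,N}$ has a unique extension to $\widetilde{\fR}_{M,N}$ — is reasonable in outline, and genuinely differs from the paper's proof, which routes the inverse through $\varpi_{M,N}\colon\fR_{M,N}\to\fM_{M,N}$ and uses the unique extension of an $M\times N$-maze to an $N\times N$-maze. The problem is that your proposed description of $\widetilde{\fR}_{M,N}$ is too coarse, so the uniqueness you flag as ``the main obstacle'' actually fails for your candidate set.

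Concretely, you take $\widetilde{\fR}_{M,N}$ to be all placements in $\fR_{N,N}$ having a rook in each of rows $M+1,\ldots,N$ with strictly increasing column indices. A placement $\frr\in\fR_{M,N}$ with $k$ rooks occupies $k$ columns, so it can be extended to such a placement by choosing \emph{any} $N-M$ of the $N-k$ unoccupied columns and filling rows $M+1,\ldots,N$ in increasing order; there are $\binom{N-k}{N-M}$ extensions, which is $>1$ whenever $k<M$. Hence $\sharp\widetilde{\fR}_{M,N}=\sum_k r_{M,N,k}\binom{N-k}{N-M}>\sum_k r_{M,N,k}=\sharp\fR_{M,N}$ (where $r_{M,N,k}$ counts $k$-rook placements), so your set is strictly larger than the true image and cannot biject with $\fR_{M,N}$ by deletion of the bottom rows. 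The missing information is \emph{which} $N-M$ unoccupied columns receive the new rooks, and that is not determined merely by the set of occupied columns: it depends on the full combinatorics of $\frr$. The paper encodes this by passing to the maze $\varpi_{M,N}(\frr)$, whose $N-M$ type-III walls exit the bottom boundary at the specific columns $c_1<\cdots<c_{N-M}$ that receive the new rooks, and then observing (via the non-crossing requirement on walls) that this maze extends uniquely to an $N\times N$-maze whose lower $N-M$ rows carry only x's. To make your argument go through you would need to reproduce, inside $\fR_{N,N}$, the extra condition that the NW-envelope construction imposes on those columns; as written, step 3 of your plan is not a gap to be filled in but a claim that is false for the set you defined.
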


\begin{proof}
We construct the inverse bijection $\varrho_{M,N}\colon \fR_{M,N}\to RB_{M,N}$. Given a rooks placement
in the $M\times N$-chessboard, we consider this chessboard inside the $N\times N$-chessboard
(the upper $M$ rows). Applying the bijection $\varpi_{M,N}$ to our rooks placement, we obtain an
$M\times N$-maze. It has $N-M$ walls of type~III, and there is a unique way to extend this maze
to an $N\times N$-maze such that all these walls extend to the walls of type~I in the extended maze.
Clearly, in the rows $M+1,\ldots,N$, the resulting $N\times N$-maze has only x's. We apply the
bijection $\phi$ to obtain a colored permutation $(w',\beta')$ from this $N\times N$-maze.
Then we consider the permutation $w$ obtained from $w'$ by rearranging the last
$N-M$ positions: $w(K):=w'(K),\ 1\leq K\leq M;\ w(M+k):=w'(N+1-k),\ 1\leq k\leq N-M$.
Finally, we consider the coloring $\beta$ obtained from $\beta'$ by changing all the colors of
$M+2,\ldots,N$ from blue to red. The resulting colored permutation $(w,\beta)$ is the desired
element of $RB_{M,N}$.
\end{proof}

\subsection{Linear side}
\label{linear gl}
For the orthosymplectic applications we will need one more bijection between $\fM_{M,N}$ and
$\on{Irr}\Lambda_{\scrX^\vee_{M,N}}$ in case $MN$ is even. We will consider the case of even $N=2n$
in detail, the case of even $M=2m$ being entirely similar.

Recall that $B^\vee$ is upper-triangular in the standard base $v^-_1,\ldots,v^-_{2n}$ of $\BC^N$
and the standard base $v^+_1,\ldots,v^+_{M}$ of $\BC^M$. We denote by $\BC^n\subset\BC^N$ the
subspace spanned by $v^-_1,\ldots,v^-_n$. Then $\scrL:=\Hom(\BC^N/\BC^n,\BC^M)\oplus\Hom(\BC^M,\BC^n)$
is a Lagrangian subspace of the symplectic vector space
$\scrX_{M,N}^\vee=\Hom(\BC^N,\BC^M)\oplus\Hom(\BC^M,\BC^N)$ with the quotient
$\scrQ_{M,N}:=\scrX_{M,N}^\vee/\scrL=\Hom(\BC^n,\BC^M)\oplus\Hom(\BC^M,\BC^N/\BC^n)$.
Note that $\scrL$ is $B^\vee$-invariant but does not admit a $B^\vee$-invariant direct complement.
Choosing a direct complement $\scrL'\subset\scrX_{M,N}^\vee$ (e.g.\ arising from $\BC^{n\prime}\subset\BC^N$
spanned by $v^-_{n+1},\ldots,v^-_{N}$) we identify $\scrX_{M,N}^\vee$ with $T^*\scrQ_{M,N}$ (but in a {\em not}
$B^\vee$-equivariant way).

Given a point $\fq\in\scrQ_{M,N}$ and $b\in B^\vee$ such that $b\fq=\fq$, we choose a lift of $\fq$ to
$\fx\in\scrX_{M,N}^\vee$; then $b\fx-\fx\in\scrL$, and the symplectic pairing
$\langle\fx,b\fx-\fx\rangle=:\fq(b)$
is well defined (it depends on $\fq,b$ but not on a choice of $\fx$). We say that a $B^\vee$-orbit
$\BO\subset\scrQ_{M,N}$ is {\em relevant} if the above pairing vanishes for some (equivalently, any) point
$\fq\in\BO$ and any $b\in\on{Stab}_{B^\vee}(\fq)$.

Under the projection $\scrX_{M,N}^\vee\to\scrQ_{M,N}$, the image of $\Lambda_{\scrX_{M,N}^\vee}$ is the union of all
the relevant orbits. More precisely, let us consider the composition
$\on{q}\colon\Lambda_{\scrX_{M,N}^\vee}\hookrightarrow\scrX_{M,N}^\vee\twoheadrightarrow\scrQ_{M,N}$. Then
$\on{q}^{-1}(\BO)=\emptyset$ for a non-relevant $B^\vee$-orbit in $\scrQ_{M,N}$, and
$\on{q}^{-1}(\BO)$ is 
the conormal vector bundle $T^*_\BO\scrQ_{M,N}$
for a relevant $B^\vee$-orbit in $\scrQ_{M,N}$. Indeed, we identify
$\scrX^\vee_{M,N}\simeq T^*\scrQ_{M,N}\simeq\scrQ_{M,N}\times\scrL\simeq\scrL'\times\scrL$ and consider a point
$(y',y)\in\Lambda_{\scrX_{M,N}^\vee}$, so that $y'=\on{q}(y',y)$. We further decompose
$(y',y)=(y'_{Mn}+y'_{n\prime M},y_{Mn\prime}+y_{nM})$ for
\[y'_{Mn}\colon\BC^n\to\BC^M,\ y'_{n\prime M}\colon\BC^M\to\BC^{n\prime},\
y_{Mn\prime}\colon\BC^{n\prime}\to\BC^M,\ y_{nM}\colon\BC^M\to\BC^n.\]
Then the equation for $\Lambda_{\scrX_{M,N}^\vee}$ says that $y_{nM}y'_{Mn}\colon\BC^n\to\BC^n$ must be
strictly upper triangular, and $y'_{n\prime M}y_{Mn\prime}\colon\BC^{n\prime}\to\BC^{n\prime}$ must be
strictly upper triangular, and $y'_{Mn}y_{nM}+y_{Mn\prime}y'_{n\prime M}\colon\BC^M\to\BC^M$ must be
strictly upper triangular, while $y'_{n\prime M}y'_{Mn}\colon\BC^n\to\BC^{n\prime}$ must vanish
(since it is lower triangular in $\End(\BC^N)$).
There is no equation on $y_{Mn\prime}y_{nM}\colon\BC^{n\prime}\to\BC^n$ since it is automatically
upper triangular in $\End(\BC^N)$. Hence for a fixed $y'$, we only have linear commutation equation
on $y$. We have proved

\begin{lem}
  \label{relev}
  The Lagrangian subvariety $\Lambda_{\scrX_{M,N}^\vee}\subset\scrX_{M,N}^\vee\simeq T^*\scrQ_{M,N}$ is the union
  $\bigcup_{\BO\, \on{relevant}}T^*_\BO\scrQ_{M,N}$ of conormal bundles 
  over all the relevant orbits $\BO\subset\scrQ_{M,N}$. In particular, the set of relevant $B^\vee$-orbits in
  $\scrQ_{M,N}$ is in a natural bijection with $\on{Irr}\Lambda_{\scrX_{M,N}^\vee}$, and
  $\bigcup_{\BO\, \on{relevant}}T^*_\BO\scrQ_{M,N}$ is closed in $\scrX_{M,N}^\vee$. \hfill $\Box$
\end{lem}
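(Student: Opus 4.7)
The plan is to exploit the (non-$B^\vee$-equivariant) Lagrangian splitting $\scrX^\vee_{M,N}\simeq \scrL'\times\scrL\simeq T^*\scrQ_{M,N}$ already fixed in the setup, and translate the moment map condition into explicit linear equations on the cotangent coordinate $y\in\scrL$ for fixed base coordinate $y'\in\scrQ_{M,N}$. Concretely, I would decompose a point as $(y',y)=(y'_{Mn}+y'_{n'M},\,y_{Mn'}+y_{nM})$ and write the condition $\bmu^\vee(y',y)\in(\fb^\vee)^\perp$ as strict upper-triangularity of the four compositions $y_{nM}y'_{Mn}$, $y'_{n'M}y_{Mn'}$, $y'_{Mn}y_{nM}+y_{Mn'}y'_{n'M}$, together with the vanishing $y'_{n'M}y'_{Mn}=0$ (which is \emph{lower} triangular in $\End(\BC^N)$). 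The first three are linear in $y$ once $y'$ is fixed; the last is a condition on $y'$ alone.

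Next I would reconcile the condition on $y'$ with the intrinsic relevance condition. Given $\fq\in\scrQ_{M,N}$ and $b\in\on{Stab}_{B^\vee}(\fq)$, I would expand $\fq(b)=\langle\fx,b\fx-\fx\rangle$ using the same decomposition and identify the pairing with precisely the obstruction $y'_{n'M}y'_{Mn}$ (and its $M\times M$ counterpart on $y'_{Mn}\cdot(\cdot)+(\cdot)\cdot y'_{n'M}$): vanishing of $\fq(b)$ for all $b$ stabilizing $\fq$ is equivalent to the $y'$-only equations extracted above. This shows $\on{q}^{-1}(\BO)$ is empty unless $\BO$ is relevant.

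Over a relevant orbit, I would check that the linear system for $y$ cuts out a subspace of dimension $\dim\scrQ_{M,N}-\dim\BO$ and identifies, equivariantly along $\BO$, with the conormal $T^*_\BO\scrQ_{M,N}$. For this it suffices to check at one point of $\BO$ that the annihilator of the tangent space $\fb^\vee\cdot\fq\subset T_\fq\scrQ_{M,N}$ is exactly the solution space of the linear system on $y$, which is a direct matrix-block computation using the symplectic pairing. Because there are finitely many $B^\vee$-orbits on $\scrQ_{M,N}$ (rook combinatorics), the moment map preimage $\Lambda_{\scrX^\vee_{M,N}}$ is a finite union of the irreducible closed sets $\overline{T^*_\BO\scrQ_{M,N}}$, each of dimension $\dim\scrQ_{M,N}=\tfrac12\dim\scrX^\vee_{M,N}$. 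These are the irreducible components, naturally indexed by the relevant orbits, and their union is closed because it coincides with the closed subvariety $\Lambda_{\scrX^\vee_{M,N}}$.

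The main obstacle will be the calculation identifying $\fq(b)$ with the $y'$-only equations coming from the moment map: one has to make sure that the choice of the complement $\scrL'$ does not affect the intrinsic pairing, and that the strict upper-triangularity block equations (both on $\BC^n$, $\BC^{n'}$ and on $\BC^M$) are matched by \emph{all} stabilizer directions $b\in\on{Stab}_{B^\vee}(\fq)$ rather than just a convenient subfamily. Everything else (dimension count, closedness, irreducibility of each conormal closure) then follows from general facts about conormal bundles of orbits under a connected solvable group acting with finitely many orbits.
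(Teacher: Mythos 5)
Your proposal follows the paper's proof closely: it uses the same Lagrangian splitting $\scrX^\vee_{M,N}\simeq\scrL'\times\scrL$, the same block decomposition $(y',y)=(y'_{Mn}+y'_{n'M},\,y_{Mn'}+y_{nM})$, and the same observation that the moment-map condition decouples into a single $y'$-only equation $y'_{n'M}y'_{Mn}=0$ plus three equations that are linear homogeneous in $y$ for fixed $y'$ (while the potentially quadratic block $y_{nM}y_{Mn'}$ lands in the unconstrained part). The extra detail you add (matching the $y'$-only constraint with the intrinsic relevance pairing, and verifying the linear solution space is the full conormal rather than a torsor) is a welcome amplification of a step the paper leaves implicit; the only small inaccuracy is the parenthetical suggestion of an ``$M\times M$ counterpart'' of the $y'$-only obstruction --- there is none, since the $\fgl(M)$-block $y'_{Mn}y_{nM}+y_{Mn'}y'_{n'M}$ vanishes identically at $y=0$, so the entire obstruction is carried by $y'_{n'M}y'_{Mn}$ alone.
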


\begin{rem}
  \label{super}
More generally, just within this remark, let us denote by $\scrX^\vee$ the odd part $\sg_{\bar1}$
of a basic classical Lie superalgebra $\sg$. Let us denote by $G^\vee$ a Lie group with the Lie
algebra $\sg_{\bar0}$ acting (via adjoint action) on $\scrX^\vee$. Then $\scrX^\vee$ is equipped with a
$G^\vee$-invariant symplectic form (restriction of the invariant non-degenerate even symmetric
bilinear form on $\sg$). The moment map $\bmu^\svee\colon\scrX^\vee\to\sg_{\bar0}^*\cong\sg_{\bar0}$
coincides with the half-supercommutator: $\bmu^\svee(x)=\frac12[x,x]$. We denote by $B^\vee\subset G^\vee$
a Borel subgroup, and $B^\vee_-$ an opposite Borel subgroup. Their Lie algebras are denoted
$\fb^\svee,\fb^\svee_-$, and their nilpotent radicals are denoted $\fn^\svee,\fn^\svee_-$.
We denote $(\bmu^\svee)^{-1}(\fn^\svee)$ by $\Lambda_{\scrX^\vee}$.
We choose a pair of opposite Borel subalgebras in $\sg$ containing $\fb^\svee,\fb^\svee_-$ and denote
their odd parts by $\sn,\sn_-$. Then $B^\vee$ acts on $\sn$ and on $\scrQ:=\scrX^\vee/\sn$. The direct
sum decomposition $\scrX^\vee=\sn\oplus\sn_-$ gives rise to an identification $\scrX^\vee\simeq T^*\scrQ$
(but this identification is {\em not} $B^\vee$-equivariant). We consider the composition
$\on{q}\colon\Lambda_{\scrX^\vee}\hookrightarrow\scrX^\vee\twoheadrightarrow\scrQ$. Then
$\on{q}^{-1}(\BO)=\emptyset$ for a non-relevant $B^\vee$-orbit in $\scrQ$, and $\on{q}^{-1}(\BO)$ is
a torsor $\CT^*_\BO\scrQ$ over the conormal bundle $T^*_\BO\scrQ$ for a relevant $B^\vee$-orbit in $\scrQ$.
Indeed, for $(x,x_-)\in\sn\oplus\sn_-\simeq T^*\scrQ$ we have $\frac12[x+x_-,x+x_-]=\frac12[x,x]+
\frac12[x_-,x_-]+[x,x_-]$. Since $\frac12[x,x]\in\fn^\svee$, the equation
$\frac12[x+x_-,x+x_-]\in\fn^\svee$ is equivalent to the affine-linear equation $[2x+x_-,x_-]=0$
on $x$ (for a fixed $x_-$).

We conclude that $\Lambda_{\scrX^\vee}=\bigcup_{\BO\, \on{relevant}}\CT^*_\BO\scrQ$ is Lagrangian, and
$\on{q}\colon\Lambda_{\scrX^\vee}\to\bigsqcup_{\BO\, \on{relevant}}\BO$ defines a bijection between
$\on{Irr}\Lambda_{\scrX^\vee}$ and the set of relevant $B^\vee$-orbits in $\scrQ$.
\end{rem}

Given a maze $\fm\in \fM_{M,N=2n}$ we take the matrix $C_\fm\in\Hom(\BC^n,\BC^M)$ with 1's at the
positions marked by x's in the left $M\times n$-half of $\fm$, and 0's everywhere else. We also take
the matrix $D_\fm\in\Hom(\BC^N/\BC^n,\BC^M)$ with 1's at the positions marked by o's in the right
$M\times n$-half of $\fm$, and 0's everywhere else. Then the transposed matrix
$^t\!D_\fm\in\Hom(\BC^M,\BC^N/\BC^n)$, and the pair $(C_\fm,{}^t\!D_\fm)\in\scrQ_{M,N}$.
Note that $^t\!D_\fm C_\fm=0=C_\fm{}^t\!D_\fm$.

In case $M=2m$, the above constructions have obvious analogues. For instance, we set
$\BC^N=\BC v_1^+\oplus\ldots\oplus\BC v_N^+,\ \BC^M=\BC v_1^-\oplus\ldots\oplus\BC v_{2m}^-,\ 
\BC^m=\BC v_{m+1}^-\oplus\ldots\oplus\BC v_{2m}^-\subset\BC^M,\
\scrL=\Hom(\BC^N,\BC^m)\oplus\Hom(\BC^M/\BC^m,\BC^N),\ \scrQ_{M,N}=\Hom(\BC^N,\BC^M/\BC^m)\oplus\Hom(\BC^m,\BC^N),\
D_\fm\in\Hom(\BC^N,\BC^M/\BC^m)$ has 1's at the positions marked by x's in the upper $m\times N$-half
of $\fm$, and 0's everywhere else. Finally, $C_\fm\in\Hom(\BC^N,\BC^m)$ has 1's at the positions
marked by o's in the lower $m\times N$-half of $\fm$, and 0's everywhere else. The transposed
matrix $^tC_\fm\in\Hom(\BC^m,\BC^N)$, and the pair $(D_\fm,{}^tC_\fm)\in\scrQ_{M,N}$.
Note that $D_\fm{}^tC_\fm=0={}^tC_\fm D_\fm$.

\begin{lem}
  \label{releva}
  \textup{(a)} For any maze $\fm\in \fM_{M,N=2n}$, the point $(C_\fm,{}^t\!D_\fm)\in\scrQ_{M,N}$ lies in a
  relevant orbit.

  \textup{(b)} If $\fm\ne\fm'\in \fM_{M,N=2n}$, then the points $(C_\fm,{}^t\!D_\fm)$ and
  $(C_{\fm'},{}^t\!D_{\fm'})$ lie in different relevant orbits. Thus, to a maze $\fm\in \fM_{M,N=2n}$
  we associate a relevant $B^\vee$-orbit $\BO_\fm\subset\scrQ_{M,N}$.

  \textup{(c)} Any relevant $B^\vee$-orbit in $\scrQ_{M,N=2n}$ is of type $\BO_\fm$ for an appropriate maze
  $\fm\in \fM_{M,N}$. Thus the association $\fm\mapsto\BO_\fm$ is a bijection between $\fM_{M,N}$
  and the set of relevant $B^\vee$-orbits in $\scrQ_{M,N}$.

  \textup{(d)} Similarly, the association $\fM_{M=2m,N}\ni\fm\mapsto$ the $B^\vee$-orbit
  $\BO_\fm$ of the point $(D_\fm,{}^tC_\fm)\in\scrQ_{M=2m,N}$, is a bijection between $\fM_{M,N}$
  and the set of relevant $B^\vee$-orbits in $\scrQ_{M,N}$.
\end{lem}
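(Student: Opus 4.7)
For part (a), the plan is to exhibit an explicit preimage of $(C_\fm,{}^t\!D_\fm)$ under the projection $\on{q}\colon\Lambda_{\scrX^\vee_{M,N}}\to\scrQ_{M,N}$; then Lemma~\ref{relev} will immediately give relevance of the orbit. I take the lift with $y'_{Mn}=C_\fm$, $y'_{n\prime M}={}^t\!D_\fm$, and $y_{nM}=0=y_{Mn\prime}$. With this choice the four equations describing $\Lambda_{\scrX^\vee_{M,N}}$ in the proof of Lemma~\ref{relev} collapse: the three involving $y$ are satisfied trivially, and the fourth reduces to $^t\!D_\fm C_\fm=0$. This remaining identity holds because the combined rook placement $\pi_{M,n,n}(\fm)\in\fR_{M,2n}$ of Lemma~\ref{pi MNN}(a) is non-attacking, so no row of the $M\times 2n$ chessboard contains both an x in the left half and an o in the right half.

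For parts (b) and (c), the plan is to match relevant $B^\vee$-orbits in $\scrQ_{M,N}$ with elements of $\fM_{M,N}$ in three steps. First, I show that a $B^\vee$-orbit $\BO$ is relevant if and only if any representative $(C,{}^t\!D)$ satisfies ${}^t\!DC=0$: necessity comes from the ``pure-$y'$'' equation in the proof of Lemma~\ref{relev}, and sufficiency follows by the zero-cotangent lift of part (a). The condition ${}^t\!DC=0$ is $B^\vee$-invariant since ${}^t\!DC\mapsto g'({}^t\!DC)g^{-1}$ under the action $(C,{}^t\!D)\mapsto(hCg^{-1},g'{}^t\!Dh^{-1})$.

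Second, I classify the $B^\vee$-orbits on the closed locus $\{{}^t\!DC=0\}\subset\scrQ_{M,N}$. The action of $B^\vee$ on $\scrQ_{M,N}$ factors through $(B_n\times B_{n\prime})\times B_M$, where $B_n$ and $B_{n\prime}$ are the Borels induced by $B^\vee$ on $\BC^n$ and on $\BC^N/\BC^n\cong\BC^{n\prime}$. Using $(B_n,B_M)$ I first bring $C$ to a canonical rook-matrix form $C_{\frr_L}$, and then use $B_{n\prime}$ together with the stabilizer of $C_{\frr_L}$ in $B_M$ to bring ${}^t\!D$ to a canonical rook-matrix form ${}^t\!D_{\frr_R}$. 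The equation ${}^t\!DC=0$ translates into the disjointness of the row supports of $\frr_L$ and $\frr_R$ in $\{1,\dots,M\}$, so relevant orbits correspond bijectively to pairs $(\frr_L,\frr_R)$ whose concatenation (with $\frr_L$ in the left half and $\frr_R$ in the right half of the $M\times 2n$ chessboard) is non-attacking, i.e., with elements of $\fR_{M,2n}$.

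Third, Lemma~\ref{pi MNN}(a) identifies the set of such concatenated placements with $\fR_{M,2n}\iso\fM_{M,2n}$, and tracing through the construction shows that the orbit of $(C_\fm,{}^t\!D_\fm)$ corresponds precisely to the maze $\fm$ itself, proving (b) and (c). Part (d) for $M=2m$ follows by the entirely symmetric argument, now invoking the ``horizontal'' variant $\pi_{M_u,M_d,N}$ of Lemma~\ref{pi MNN}(b) and swapping the roles of $\BC^n$ and $\BC^m$. The main technical obstacle is the canonical-form reduction in step two; this is a standard $B\times B$ orbit analysis on chain diagrams $\BC^n\to\BC^M\to\BC^{n\prime}$ subject to the composition vanishing, and reduces by induction on the ranks of $C$ and ${}^t\!D$ to the classification of Schubert-type orbits in a pair of partial flag varieties.
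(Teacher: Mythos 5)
Your treatment of (a) is essentially the same as the paper's: both lift $(C_\fm,{}^t\!D_\fm)$ by setting the $\scrL$-component to zero and both reduce membership of the lift in $\Lambda_{\scrX^\vee_{M,N}}$ to the single equation ${}^t\!D_\fm C_\fm=0$, which holds because the x's and o's of a maze occupy disjoint rows (equivalently, $\pi_{M,n,n}(\fm)\in\fR_{M,2n}$ is non-attacking). For (b)--(d), you take a genuinely different route. The paper proves (b) (injectivity of $\fm\mapsto\BO_\fm$) by row reduction via Lemma~\ref{pi MNN}, and then gets (c) purely by counting: Lemma~\ref{relev} identifies the set of relevant $B^\vee$-orbits with $\on{Irr}\Lambda_{\scrX^\vee_{M,N}}$, and the latter is already known to have exactly $|\fR_{M,N}|=|\fM_{M,N}|$ elements from the conormal-bundle description of $\Lambda_{\scrX^\vee_{M,N}}$ over $\Hom(\BC^N,\BC^M)$ at the start of \S\ref{glmn}; an injection between finite sets of equal cardinality is a bijection. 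You instead characterize relevance as the closed $B^\vee$-invariant condition ${}^t\!DC=0$ and attempt a direct classification of $B^\vee$-orbits on that locus. This gives a more explicit picture of the bijection, but it is also where your argument has a gap: the claim that after putting $C$ into rook form $C_{\frr_L}$, the stabilizer $\Stab_{B_M}(C_{\frr_L})$ together with $B_{n'}$ reduces ${}^t\!D$ to a rook matrix $\frr_R$ supported on the non-rook rows — and, crucially, that the resulting pair $(\frr_L,\frr_R)$ is a complete orbit invariant with no finer invariant surviving — is asserted as ``a standard $B\times B$ orbit analysis on chain diagrams'' but not actually proved. This is precisely the content one would need; the induced action of $\Stab_{B_M}(C_{\frr_L})$ on the non-rook rows is indeed a Borel, but that has to be verified, and one also has to rule out additional invariants coming from the interplay of the two halves. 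Without this, your argument could in principle over- or under-count. The paper's counting argument is designed specifically to avoid having to carry out this classification.
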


\begin{proof}
(a) We may view $C_\fm$ as an $M\times N$-matrix by adding to it the right $M\times n$-half of zeros,
and we may view $^t\!D_\fm$ as an $N\times M$-matrix by adding to it the upper $n\times M$-half
of zeros. Thus we lift the point $(C_\fm,{}^t\!D_\fm)$ to the same named point in $\scrX^\vee_{M,N}$.
The equality $^t\!D_\fm C_\fm=0=C_\fm{}^t\!D_\fm$ implies that $(C_\fm,{}^t\!D_\fm)\in\Lambda_{\scrX_{M,N}^\vee}$.
It follows from~Lemma~\ref{relev} that the (same named) image of $(C_\fm,{}^t\!D_\fm)$ in
$\scrQ_{M,N}$ lies in a relevant orbit.

(b) follows e.g.\ from~Lemma~\ref{pi MNN} and the row reduction. 

(c) follows from (b) since we know from~Lemma~\ref{relev} and~Lemma~\ref{pi MNN} that the
number of relevant orbits is equal to the number of mazes.

(d) Similar to the above.
\end{proof}

\subsection{Equivariant slice, I}
\label{slice GL}
Now we are in a position to prove~Proposition~\ref{gl(M|N) lagr}.

The irreducible components of $\Lambda_{\scrX_{M,N}}$ are the twisted conormal bundles to the
{\em relevant} $B$-orbits in $\scrY_{M,N}$. Equivalently, we consider the flag variety $\CB=G/B$
with the action of $H_{M,N}$. Then an $H_{M,N}$-orbit in $\CB$ is called relevant if the restriction of
$\psi$ to the stabilizer of a point in this orbit is trivial. Let us give yet one more equivalent
but more economical reformulation. Let $B_M\subset\GL(M),\ B_N\subset\GL(N)$
be the upper-triangular Borel subgroups. Let $H^M_N:=B_M\ltimes\on{U}\subset\GL(N)$.
Then $H^M_N$ acts on the flag variety $\CB_N=\GL(N)/B_N$ with finitely many orbits, and we have to count
the relevant $H^M_N$-orbits.

Let $B^M_N$ denote the stabilizer of the basis vector $v_{M+1}$ in the upper-triangular Borel subgroup
$B_N\subset\GL(N)$. Clearly, $H^M_N\subset B^M_N$, so we have the natural map of orbit sets
$\alpha\colon H^M_N\backslash\CB_N\to B^M_N\backslash\CB_N$. We have an exact sequence of groups
\[1\to H^M_N\to B^M_N\to\BG_m^{N-M-1}\to1.\] Since $H^M_N\backslash\CB_N$ is finite, and $\BG_m^{N-M-1}$
is connected, $\alpha$ is a bijection.

On the other hand, the set $B^M_N\backslash\CB_N$ is in a natural bijection with
set of $B_N$-orbits in the subset $\scrM^M_N\subset\CB_N\times\BC^N$ formed by the pairs $(F_\bullet,v)$
such that $v\in F_{M+1}\setminus F_M$. According to~\cite[Lemma 2]{t}, the orbits $B_N\backslash\scrM^M_N$
are indexed by colored permutations $(w,\beta)\in RB^M_N$ (notation introduced
after~Corollary~\ref{symp mirab}). 
Recall from~\cite[Lemma 2]{t} that the set $RB_N$ of colored permutations is in a natural bijection
with the set of pairs $(w,\sigma)$ where $w$ is a permutation of $\{1,\ldots,N\}$, and $\sigma$
is a non-empty decreasing sequence. 

Now the subset of relevant orbits among $H^M_N\backslash\CB_N$ corresponds to the subset
$RB_{M,N}\subset RB^M_N$ (notation introduced after~Corollary~\ref{symp mirab}). Indeed, we choose the
following representatives of the double cosets
$B^M_N\backslash\GL(N)/B_N$. Given $(w,\beta)\in RB^M_N$ we consider the corresponding decreasing
sequence $(w,\sigma)$. The desired representative $A_{(w,\beta)}\in\GL(N)$ is the sum of two matrices
$A_{(w,\beta)}=A_1+A_2$, where $A_1$ is the matrix of $w$ (that is $a_{ij}=1$ iff $j=w(i)$),
and the entries of $A_2$ all vanish except for $a_{M+1,K}=1$ iff $K\leq M$ and $K$ belongs to $\sigma$.
Since the corresponding points in $\CB_N=\GL(N)/B_N$ are all fixed by the diagonal subgroup
$\BG_m^{N-M-1}\subset B^M_N$, the restriction of the character
$\psi=u_{M+1,M+2}+\ldots+u_{N-1,N}\colon\on{U}_{M,N}\to\BG_a$ to the stabilizer of a point vanishes iff
all the summands vanish. And the summand $u_{M+1+k,M+2+k}$ vanishes iff $w(M+1+k)>w(M+2+k)$.

It remains to apply~Lemma~\ref{rho}.
\hfill $\Box$

\bigskip

\noindent
The rest of~Section~\ref{glmn} lays ground for the study of orthosymplectic case in~\S\ref{osp}.

\subsection{Equivariant slice, II}
\label{slice GL'}
More generally, for $M<N$, the coisotropic variety $\scrX_{M,N}$ is also isomorphic to a twisted
cotangent bundle of the following spherical $G$-variety $\scrY_{M,N}^{(r,s,k)}$. For
$r+s=N-M-1,\ r,s\geq0$, let $\on{U}_{M,N}^{(r,s)}\subset\GL(N)$ be the unipotent subgroup
$\begin{pmatrix}U_r& {*} &{*}\\0&1_{M+1}& {*}\\0&0& U_s\end{pmatrix}$ of~\cite[Remark 2.4.1]{bfgt}.
We also fix an integer $k$ such that $r+1\leq k\leq N-s$. It gives rise to an embedding of
$\GL(M)$ into $\begin{pmatrix}1_r& 0 &0\\0&\GL(M+1)& 0\\0&0& 1_s\end{pmatrix}$ as the set of
matrices with the $k$-th row and the $k$-th column having all 0's except for~1 at $k$-th position.
We set $H_{M,N}^{(r,s,k)}:=\GL(M)\ltimes\on{U}_{M,N}^{(r,s)}$. We denote the character of
$\on{U}_{M,N}^{(r,s)}$ corresponding to the character
$\chi_{M,N}^{(r,s)}\colon\on{Lie}\on{U}_{M,N}^{(r,s)}\to\BG_a,\
(u_{ij})\mapsto\sum_{i=1}^{r-1}u_{i,i+1}+u_{rk}+u_{k,N-s+1}+\sum_{i=N-s+1}^{N-1}u_{i,i+1}$,
of {\em loc.cit.}\ by $\psi$ (note that it actually depends on the choice of $k$). This character
extends to the same named character $\psi\colon H_{M,N}^{(r,s,k)}\to\BG_a$. Then
$\scrY_{M,N}^{(r,s,k)}=\GL(N)/\on{U}_{M,N}^{(r,s,k)}=G/H_{M,N}^{(r,s,k)}$, and
$\scrX_{M,N}\simeq T^*_\psi\scrY_{M,N}^{(r,s,k)}$, cf.~\cite[\S5.3]{ty1}.

For the orthosymplectic applications we will need a bijection between $\fM_{M,N}$ and the set of
relevant $B$-orbits in $\scrY_{M,N}^{(r,s,k)}$. As in the beginning of~\S\ref{slice GL}, the latter
set is in a natural bijection with the set of relevant $H_{M,N}^{(r,s,k)}$-orbits in $\CB$,
or else with the set of relevant $H^M_{N,r,s,k}$-orbits in $\CB_N$. Here
$H^M_{N,r,s,k}:=B_M\ltimes\on{U}_{M,N}^{(r,s,k)}\subset\GL(N)$, and $B_M$ is the upper triangular
Borel subgroup in $\GL(M)$ embedded into $\GL(N)$ as in the previous paragraph (this embedding
depends on $r,s,k$).

We list the representatives in $\GL(N)$ of the relevant double cosets
$H^M_{N,r,s,k}\backslash\GL(N)/B_N$. We embed the $M\times N$-chessboard into the $N\times N$-chessboard
as follows. We add $r$ rows at the top, then~1 row at the position $k$, and then $s$ rows at the
bottom. A maze $\fm\in\fM_{M,N}$ has $r+s+1$ walls of type~III. Ignoring for a moment the $k$-th
row in the middle, we extend the rightmost $r$ (resp.\ the leftmost $s$) type~III walls of $\fm$ to
the walls of type~II (resp.\ of type~I) in the $(N-1)\times N$-chessboard (such an extension is
uniquely defined). We also extend the remaining type~III wall (resp.\ all the type~I,\ resp.\ all the
type~II walls) of $\fm$ to the type~III
wall $W_{III}$ (resp.\ to the type~I, resp.\ to the type~II walls) of the resulting
$(N-1)\times N$-maze (such an extension is uniquely defined).
Now we restitute the $k$-th row in the middle to obtain the $N\times N$-chessboard. We place a
rook in the box of the $k$-th row adjacent to the wall $W_{III}$,
and we denote by $i$ the number of this box in the $k$-th row.
We also place the rooks into all the boxes on the wall $W_{III}$ marked with x above the $k$-th row
and marked with o below the $k$-th row. We also place the rooks into all
the boxes marked with x to the left of $W_{III}$ and into all the boxes marked with o to the right
of $W_{III}$. The resulting rooks placement corresponds to a permutation $w'$ of $\{1,\ldots,N\}$.
We set $w=w_0^{(r)}w_0^{(s)}w'$, where $w_0^{(r)}$ (resp.\ $w_0^{(s)}$) is the order-reversing
permutation of $\{1,\ldots,r\}$ (resp.\ of $\{N-s+1,\ldots,N\}$).
Finally, the desired representative $A_\fm^{r,s,k}\in\GL(N)$ has 1's at the graph of $w$, and also in the
$k$-th row in all the columns containing the corners of $W_{III}$
below the $k$-th row, and also in
the $i$-th column in all the rows containing the corners of $W_{III}$ above the $k$-th row,
and 0's everywhere else.

\subsection{Independence of $r,s,k$}
\label{indep rsk}
Under the identification $\scrX_{M,N}\cong T^*_\psi\scrY_{M,N}^{(r,s,k)}$, the irreducible components of
$\Lambda_{\scrX_{M,N}}$ are the closures of the twisted conormal bundles of the relevant $B$-orbits in
$\scrY_{M,N}^{(r,s,k)}$. Thus the set $\on{Irr}\Lambda_{\scrX_{M,N}}$ of irreducible components of
$\Lambda_{\scrX_{M,N}}$ is in a natural bijection with the set of relevant $B$-orbits in $\scrY_{M,N}^{(r,s,k)}$,
or equivalently, with the set of relevant $H^M_{N,r,s,k}$-orbits in $\CB_N$. Hence the bijection
between the latter set and $\fM_{M,N}$ constructed in~\S\ref{slice GL'} gives rise to the bijection
$\theta^M_{N,r,s,k}\colon\fM_{M,N}\iso\on{Irr}\Lambda_{\scrX_{M,N}}$.

\begin{prop}
  \label{independence}
  We have $\theta^M_{N,r,s,k}=\theta^M_{N,r',s',k'}\colon\fM_{M,N}\iso\on{Irr}\Lambda_{\scrX_{M,N}}$.
  The common value of all these bijections will be denoted
  $\theta_{M,N}\colon\fM_{M,N}\iso\on{Irr}\Lambda_{\scrX_{M,N}}$.
\end{prop}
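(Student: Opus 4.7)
The plan is to reduce the claim to elementary moves and then to carry out a combinatorial matching. The set of admissible triples $(r,s,k)$ with $r,s\ge 0$, $r+s=N-M-1$, $r+1\le k\le N-s$ is connected under the two types of elementary moves:
(i) $(r,s,k)\leftrightarrow (r,s,k+1)$, keeping the parabolic fixed and shifting the embedding of $\GL(M)$ inside the middle $\GL(M+1)$-block;
(ii) $(r,s,k)\leftrightarrow(r\pm1,s\mp1,k)$, altering the parabolic $\on{P}_{M,N}^{(r,s)}$ itself.
So it is enough to verify $\theta^M_{N,r,s,k}=\theta^M_{N,r',s',k'}$ along each such move.

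For move (i), the unipotent radical $\on{U}_{M,N}^{(r,s)}$ and its character $\chi_{M,N}^{(r,s)}$ are the same; only the position $k$ of the embedded $\GL(M)$ changes. I would use the simple reflection $s_k$ in the Weyl group of the middle $\GL(M+1)$-block to conjugate $H_{M,N}^{(r,s,k)}$ to $H_{M,N}^{(r,s,k+1)}$, compatibly with $\psi$; this promotes to an $H^M_{N,r,s,k}$-equivariant biregular isomorphism $\scrY_{M,N}^{(r,s,k)}\iso\scrY_{M,N}^{(r,s,k+1)}$ and then to an isomorphism of twisted cotangent bundles identified with $\on{id}_{\scrX_{M,N}}$. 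The content of the statement is then to verify, by tracking the construction of the representatives $A_\fm^{r,s,k}$ in \S\ref{slice GL'}, that under this identification the relevant orbit indexed by $\fm$ via $\theta^M_{N,r,s,k}$ is carried to the one indexed by $\fm$ via $\theta^M_{N,r,s,k+1}$: this is straightforward once one notes that the prescription for reading off $A_\fm^{r,s,k}$ — namely, extending $r$ walls of type~III to the right-leaning type~II, $s$ to the left-leaning type~I, and attaching the remaining type~III wall $W_{III}$ through the $k$-th row — is intertwined with the above conjugation.

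For move (ii), say $(r,s,k)\to(r+1,s-1,k)$, the parabolic itself changes, but by \cite[Remark~2.4.1]{bfgt} both Whittaker inductions yield canonically the same hyperspherical $\scrX_{M,N}$. Here the clean strategy is to deform: interpolate $\on{U}_{M,N}^{(r,s)}$ and $\on{U}_{M,N}^{(r+1,s-1)}$ through a one-parameter family of conjugate unipotent subgroups carrying a compatible family of Whittaker characters, identify $\scrX_{M,N}$ with the total space of the family over the parameter line, and argue that the irreducible components of $\Lambda_{\scrX_{M,N}}$ remain constant along the family. The maze-to-component assignments at the two endpoints are then forced to coincide provided we can check the match on a single generic point; I would pick the open Schubert-like stratum corresponding to the rightmost (or leftmost) type~III wall of $\fm$ and verify directly that both recipes produce the same $B$-orbit closure.

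The main obstacle will be the combinatorial bookkeeping for move (ii): one has to match the reassignment of a single type~III wall from the ``right-extending'' group to the ``left-extending'' group in the description of $A_\fm^{r,s,k}$ with the geometric deformation, and verify that the $k$-th row insertion produces consistent matrices. I expect this to be a tedious but completely mechanical verification, once move (i) has removed the freedom in $k$. An alternative, and perhaps more conceptual, route would be to describe an intrinsic invariant of an irreducible component $\Lambda\subset\Lambda_{\scrX_{M,N}}$ — such as the configuration of ranks of the generic point of $\Lambda$ restricted to the standard flag of $\BC^N$ — that directly recovers the maze $\fm$ from $\Lambda$ independently of $(r,s,k)$; this would make independence automatic and also clarify the duality with the linear-side matching in Lemma~\ref{releva}.
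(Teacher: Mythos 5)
Your reduction to two elementary moves — changing $k$ within the middle block, and shifting a column from one side parabolic piece to the other — is exactly the paper's decomposition (Lemmas~\ref{indep k} and \ref{indep rs}).

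For move (i) your idea is close to the paper's: the paper verifies that the cyclic permutation $w_{k,k'}$ (which for a single step is just $s_k$) carries $A_\fm^{r,s,k}$ into the double coset $H^M_{N,r,s,k'}\,A_\fm^{r,s,k'}\,B_N$. But the verification is not as automatic as you suggest. The two representatives do \emph{not} simply match term by term under the conjugation; they agree only away from the rows and columns carrying the wall $W_{III}$, and the paper has to isolate the complementary square blocks $A,A'$ and argue that both lie in the open double coset of $B_{l-1}\backslash\GL(l)/B_l$. Claiming that the intertwining ``is straightforward'' glosses over this block analysis, which is the actual content of the lemma.

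For move (ii) there is a real gap. You propose interpolating $\on{U}^{(r,s)}_{M,N}$ and $\on{U}^{(r+1,s-1)}_{M,N}$ through a one-parameter family of conjugate unipotent subgroups carrying compatible Whittaker characters, then arguing that the components of $\Lambda_{\scrX_{M,N}}$ ``remain constant.'' But $\Lambda_{\scrX_{M,N}}$ is a fixed variety and its components do not deform; what changes is the \emph{discrete labeling} $\theta^M_{N,r,s,k}$, which is defined via the explicit representatives $A_\fm^{r,s,k}$ attached to the particular polarization. There is no continuous object through which the labeling varies, and ``check a single generic point'' does not constrain the labeling on the other components. Moreover, the Whittaker character $\chi^{(r,s)}_{M,N}$ is non-generic (it kills the middle block) and it is not clear that the hypothetical family of characters can be chosen compatibly with Losev's identification of each twisted cotangent bundle with the same $\scrX_{M,N}$. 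The paper avoids all of this with a specific and short argument: one projects both $\scrY^{(r,s,r+1)}_{M,N}$ and $\scrY^{(r+1,s-1,N-s+1)}_{M,N}$ to the intermediate quotient $\scrZ=\on{U}^{(r+1,s)}_{M-1,N}\backslash\GL(N)$, observes that the two projections $p,q$ are \emph{dual} rank-$M$ vector bundles over $\scrZ$, and checks that the two representatives of the orbit indexed by $\fm$ hit the same $B_M\times B_N$-orbit $\BO_w\subset\scrZ$, with fibres $p^{-1}(w)$ and $q^{-1}(w)$ open in a subspace $V\subset\BC^M$ and its annihilator $V^\perp\subset\BC^{M*}$. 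From there the equality of the two twisted conormal-bundle closures inside $\scrX_{M,N}$ is immediate. You would do well to look for this dual-vector-bundle structure rather than a deformation: it is what makes the step tractable. Your ``alternative route'' via an intrinsic rank invariant is an appealing idea, and in fact Lemma~\ref{releva} on the dual side is in that spirit, but as stated it is only a suggestion and not carried out.
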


\begin{proof}
  First we keep $r,s$ intact.

  \begin{lem}
    \label{indep k}
    We have $\theta^M_{N,r,s,k}=\theta^M_{N,r,s,k'}\colon\fM_{M,N}\iso\on{Irr}\Lambda_{\scrX_{M,N}}$.
  \end{lem}

  \begin{proof}
We assume $k'>k$. We consider the cyclic
permutation $w_{k,k'}\colon k\mapsto k+1\mapsto\ldots\mapsto k'\mapsto k$ and view it as a
permutation of $\{1,\ldots,N\}$ fixing all the elements less than $k$ or bigger than $k'$.
We have to check that $w_{k,k'}A_\fm^{r,s,k}$ and $A_\fm^{r,s,k'}$ (representatives constructed
in~\S\ref{slice GL'}) lie in the same double coset in $H^M_{N,r,s,k'}\backslash\GL(N)/B_N$.
These two representatives coincide away from all the columns containing the vertical segments of
the wall $W_{III}$ (notation of {\em loc.cit.}) and all the rows containing the horizontal segments
of the wall $W_{III}$ plus the $k$-th row. These columns and rows form the square
$l\times l$-submatrices $A,A'$ (where $l$ stands for the number of vertical segments of $W_{III}$),
and the remaining columns and rows form the square
$(N-l)\times(N-l)$-submatrix $C$ common for $w_{k,k'}A_\fm^{r,s,k}$ and $A_\fm^{r,s,k'}$.
Moreover, $w_{k,k'}A_\fm^{r,s,k}=A\oplus C$ and $A_\fm^{r,s,k'}=A'\oplus C$ (so that the matrices of
our representatives are block-diagonal up to permutation of rows and columns).

We consider the Levi subgroup $\GL(l)\times\GL(N-l)\subset\GL(N)$. The intersection of
$B_N\subset\GL(N)$ (acting on the right) with $\GL(l)$ is $B_l$. The intersection of
$H^M_{N,r,s,k'}\subset\GL(N)$ (acting on the left) with $\GL(l)$ is $B_{l-1}$: the upper triangular
subgroup in $\GL(l-1)$ embedded into $\GL(l)$ as the stabilizer of the $k'$-th vector of the
standard basis, and of the $k'$-th covector of the standard basis. So it is enough to verify that
both $A$ and $A'$ lie in a common double coset in $B_{l-1}\backslash\GL(l)/B_l$. But it is
straightforward to see that both $A,A'$ lie in the open double coset.
  \end{proof}
  
Provided independence of $k$ (for fixed $r,s$) we have just established, in order to complete
the proof of~Proposition~\ref{indep rsk}, it remains to prove the following

\begin{lem}
  \label{indep rs}
We have $\theta^M_{N,r,s,r+1}=\theta^M_{N,r+1,s-1,N-s+1}\colon\fM_{M,N}\iso\on{Irr}\Lambda_{\scrX_{M,N}}$.
\end{lem}

\begin{proof}
To compare the relevant $B_M\times B_N$-orbits in $\scrY_{M,N}^{(r,s,r+1)}=\on{U}^{(r,s)}_{M,N}\backslash\GL(N)$
and in $\scrY_{M,N}^{(r+1,s-1,N-s+1)}=\on{U}^{(r+1,s-1)}_{M,N}\backslash\GL(N)$ we consider the projections
\[\scrY_{M,N}^{(r,s,r+1)}\xrightarrow{p}\scrZ=
\on{U}^{(r+1,s)}_{M-1,N}\backslash\GL(N)\xleftarrow{q}\scrY_{M,N}^{(r+1,s-1,N-s+1)}.\]
Note that $p$ and $q$ are dual vector bundles of rank $M$.
The double cosets in $B_M\ltimes\on{U}^{(r+1,s)}_{M-1,N}\backslash\GL(N)/B_N$ are naturally indexed by the
set of permutations of $\{1,\ldots,N\}$, and both representatives $A_\fm^{r,s,r+1},A_\fm^{r+1,s-1,N-s+1}$
project to the same $B_M\times B_N$-orbit $\BO_w\subset\scrZ$ (the permutation $w$ was defined
in~\S\ref{slice GL'}). Let us denote the orbit $B_M\cdot A_\fm^{r,s,r+1}\cdot B_N$ (resp.\
$B_M\cdot A_\fm^{r+1,s-1,N-s+1}\cdot B_N$) by $\BO_\fm^{r,s,r+1}\subset\scrY_{M,N}^{(r,s,r+1)}$ (resp.\ by
$\BO_\fm^{r+1,s-1,N-s+1}\subset\scrY_{M,N}^{(r+1,s-1,N-s+1)}$).

We choose the basis vectors in $\BC^M$ with indices equal to the column numbers of all boxes
marked with x on $W_{III}$ (notation of~\S\ref{slice GL'}) and to the left of $W_{III}$.
They span a vector subspace $V\subset\BC^M$, and the remaining dual basis vectors span a vector subspace
$V^\perp\subset\BC^{M*}$. The fiber $p^{-1}(w)$ (resp.\ $q^{-1}(w)$) over $w\in\BO_w$ is an open subspace
in $V$ (resp.\ in $V^\perp$). Hence the twisted conormal bundles
$T^*_{\BO_\fm^{r,s,r+1}}\scrY_{M,N}^{(r,s,r+1)}\subset T^*_\psi\scrY_{M,N}^{(r,s,r+1)}\cong\scrX_{M,N}$ and
$T^*_{\BO_\fm^{r+1,s-1,N-s+1}}\scrY_{M,N}^{(r+1,s-1,N-s+1)}\subset T^*_\psi\scrY_{M,N}^{(r+1,s-1,N-s+1)}\cong\scrX_{M,N}$
have the same closure and lie in the same irreducible component in $\on{Irr}\Lambda_{\scrX_{M,N}}$.
\end{proof}  

This completes the proof of~Proposition~\ref{indep rsk}.
\end{proof}

\subsection{Involutions}
\label{invol}
For $M,N$ of opposite parity, we consider the involution
$\iota\colon\GL(N)\to\GL(N),\ g\mapsto w_0{}^t\!g^{-1}w_0$, where $w_0\in\GL(N)$ is the matrix with
1's at the antidiagonal, and 0's everywhere else. Equivalently, the matrix $\iota(g)$ is the inverse of
the matrix $g$ transposed with respect to the antidiagonal. The fixed point set $\GL(N)^\iota$ is the
group $\on{O}(N)$ of orthogonal transformations preserving the symmetric bilinear form given by the
matrix with 1's at the antidiagonal and 0's everywhere else. We have $\iota(B_N)=B_N$, and
$\iota(H^M_{N,r,s,k})=H^M_{N,s,r,N+1-k}$.

If $M,N$ are both even, let $w'_0$ stand for the $N\times N$-matrix with 1's in the right upper half of
the antidiagonal, $-1$'s in the left lower half of the antidiagonal, and 0's everywhere else.
We consider the involution $\iota\colon\GL(N)\to\GL(N),\ g\mapsto -w'_0{}^t\!g^{-1}w'_0$.
The fixed point set $\GL(N)^\iota$ is the
group $\Sp(N)$ of symplectic transformations preserving the skew symmetric bilinear form given by the
matrix $w'_0$. We have $\iota(B_N)=B_N$, and $\iota(H^M_{N,r,s,k})=H^M_{N,s,r,N+1-k}$.

Hence $\iota$ induces the same named involution
\[\iota\colon\scrX_{M,N}\cong T^*_\psi\scrY_{M,N}^{(r,s,k)}\to T^*_\psi\scrY_{M,N}^{(s,r,N+1-k)}\cong\scrX_{M,N},\]
and also the same named involution $\iota\colon\on{Irr}\Lambda_{\scrX_{M,N}}\to\on{Irr}\Lambda_{\scrX_{M,N}}$.

\begin{cor}
  \label{iota slice}
  We have $\iota\theta_{M,N}=\theta_{M,N}\iota$.
\end{cor}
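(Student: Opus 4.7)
By Proposition~\ref{independence}, the bijection $\theta_{M,N}\colon\fM_{M,N}\iso\on{Irr}\Lambda_{\scrX_{M,N}}$ equals $\theta^M_{N,r,s,k}$ for every admissible triple $(r,s,k)$. The plan is to exploit this freedom. The involution $\iota$ on $\GL(N)$ satisfies $\iota(B_N)=B_N$ and $\iota(H^M_{N,r,s,k})=H^M_{N,s,r,N+1-k}$, and it intertwines the characters $\psi$ on the two unipotent radicals (up to an inessential sign in the symplectic case, which is absorbed by the torus). Hence $\iota$ descends to a bijection between the sets of relevant double cosets $H^M_{N,r,s,k}\backslash\GL(N)/B_N$ and $H^M_{N,s,r,N+1-k}\backslash\GL(N)/B_N$, and this bijection is precisely the involution on $\on{Irr}\Lambda_{\scrX_{M,N}}$ transported through the identifications $\theta^M_{N,r,s,k}=\theta_{M,N}=\theta^M_{N,s,r,N+1-k}$. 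So the corollary reduces to a statement about double coset representatives.

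Concretely, I would verify that for every $\fm\in\fM_{M,N}$ the matrix $\iota(A_\fm^{r,s,k})$ lies in the same $(H^M_{N,s,r,N+1-k},B_N)$-double coset as $A_{\iota(\fm)}^{s,r,N+1-k}$. The map $\iota(g)=w_0\,{}^tg^{-1}w_0$ (resp.\ the $w'_0$ variant) acts on a permutation matrix $P_w$ by $P_w\mapsto P_{w_0ww_0}$, which is central symmetry of the $N\times N$-grid. The additional $1$'s in $A_\fm^{r,s,k}$ sitting in the $k$-th row and in the $i$-th column (produced by the corners of the middle type-III wall $W_{III}$) are likewise sent, by antidiagonal reflection, to the $(N{+}1{-}k)$-th row and $(N{+}1{-}i)$-th column.

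The remaining task is to check that this geometric reflection matches, at the level of mazes, the combinatorial involution of Definition~\ref{maze}b). Under central symmetry the SE corners of walls become NW corners and conversely, which forces the simultaneous swap $\mathrm{x}\leftrightarrow\mathrm{o}$---this is precisely the maze involution $\iota$. Moreover the extension recipe in \S\ref{slice GL'} is self-dual under the simultaneous swaps $r\leftrightarrow s$ and $k\leftrightarrow N{+}1{-}k$: the rightmost (resp.\ leftmost) type-III walls of $\fm$ that get extended into type II (resp.\ type I) walls of the $(N-1)\times N$-chessboard become, after reflection, the leftmost (resp.\ rightmost) type-III walls of $\iota(\fm)$ extended into type I (resp.\ type II). Thus antidiagonal reflection of $A_\fm^{r,s,k}$ agrees with $A_{\iota(\fm)}^{s,r,N+1-k}$ up to left multiplication by an element of $H^M_{N,s,r,N+1-k}$ and right multiplication by an element of $B_N$.

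The main obstacle is the bookkeeping in the last step: precisely tracking how the rook placed in the $k$-th row at position $i$ (adjacent to $W_{III}$) corresponds to the analogous datum for $\iota(\fm)$ with parameters $(s,r,N{+}1{-}k)$, and verifying that the sign twists arising in the symplectic case from $w'_0$ lie in the diagonal torus of $B_M\times B_N$ and therefore do not affect the double coset. No conceptual difficulty arises; the content is a direct, if fiddly, inspection of the construction of \S\ref{slice GL'}.
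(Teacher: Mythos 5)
Your proposal is correct and follows essentially the same route as the paper: reduce the claim to verifying that the double-coset representatives $\iota(A_\fm^{r,s,k})$ and $A_{\iota\fm}^{s,r,N+1-k}$ lie in a common $H^M_{N,s,r,N+1-k}$-orbit in $\CB_N$, which the paper records in one line as a ``straightforward check.'' Your elaboration of that check---central symmetry of the grid swapping SE and NW corners, matching the maze involution, with the $w'_0$ sign absorbed in the torus---is an accurate account of what the verification amounts to.
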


\begin{proof}
It is straightforward to check that the representatives $\iota(A_\fm^{r,s,k})$ and
$A_{\iota\fm}^{s,r,N+1-k}$ lie in the same $H^M_{N,s,r,N+1-k}$-orbit.
\end{proof}

\subsection{Equivariant slice, III}
In case $N=2n$ is even, we will need two more `exotic' realizations of $\scrX_{M,N}$.

\subsubsection{}
\label{slice GL''}
In case $M=2m+1<2n=N$, we will use one more realization of $\scrX_{M,N}\simeq T^*_\psi\scrY_{M,N}^{\on{mid}}$.
Namely, we set $r=s=n-m-1$, and we embed $\GL(M)$ into
$\begin{pmatrix}1_{n-m-1}& 0 &0\\0&\GL(M+1)& 0\\0&0& 1_{n-m-1}\end{pmatrix}$ as the set of matrices
preserving (in the standard basis $v_1,\ldots,v_N$ of $\BC^N$) the vectors
$v_1,\ldots,v_{n-m-1},v_n-v_{n+1},v_{n+m+2},\ldots,v_N$ and (in the dual basis $v_1^*,\ldots,v_N^*$ of
$\BC^{N*}$) the covectors $v_1^*,\ldots,v_{n-m-1}^*,v_n^*-v_{n+1}^*,v_{n+m+2}^*,\ldots,v_N^*$.
In other words, the central $2\times2$-block consists of matrices
$\begin{pmatrix}a+\frac12&a-\frac12\\ a-\frac12&a+\frac12\end{pmatrix},\ a\ne0$,
and apart from this block,
the two middle columns coincide, and the two middle rows coincide. We set
$H^{\on{mid}}_{M,N}:=\GL(M)\ltimes\on{U}_{M,N}^{(r,s)}$ (recall that $r=s=n-m-1$). We denote by $\psi$ the
character of $\on{U}_{M,N}^{(r,s)}$ corresponding to the character
$\chi_{M,N}^{\on{mid}}\colon\on{Lie}\on{U}_{M,N}^{(r,s)}\to\BG_a,\ (u_{ij})\mapsto\sum_{i=1}^{n-m-2}u_{i,i+1}+
u_{n-m-1,n}+u_{n-m-1,n+1}+u_{n,n+m+2}+u_{n+1,n+m+2}+\sum_{i=n+m+2}^{N-1}u_{i,i+1}$. This character
extends to the same named character $\psi\colon H_{M,N}^{\on{mid}}\to\BG_a$. Then
$\scrY_{M,N}^{\on{mid}}=\GL(N)/\on{U}_{M,N}^{(r,s)}=G/H_{M,N}^{\on{mid}}$, and
$\scrX_{M,N}\simeq T^*_\psi\scrY_{M,N}^{\on{mid}}$.

Since the irreducible components of $\Lambda_{\scrX_{M,N}}$ are the closures of twisted conormal bundles
of the relevant $B$-orbits in $\scrY_{M,N}^{\on{mid}}$, the set $\on{Irr}\Lambda_{\scrX_{M,N}}$ is in a natural
bijection with the set of relevant $B$-orbits in $\scrY_{M,N}^{\on{mid}}$, or equivalently, with the set of
relevant $H^M_{N,\on{mid}}$-orbits in $\CB_N$. Here
$H^M_{N,\on{mid}}:=B_M\ltimes\on{U}_{M,N}^{(r,s)}\subset\GL(N)$, and $B_M$ is the upper triangular
Borel subgroup in $\GL(M)$ embedded into $\GL(N)$ as in the previous paragraph.
Composing with the bijection $\theta_{M,N}$ of~Proposition~\ref{independence}, we obtain the same named bijection
$\theta_{M,N}$ from $\fM_{M,N}$ to the set of relevant $H^M_{N,\on{mid}}$-orbits in $\CB_N$.

\subsubsection{}
\label{slice GL'''}
In case $M=2m<2n=N$ are both even, we will use one more realization of
$\scrX_{M,N}\simeq T^*_\psi\scrY_{M,N}^{\on{mid}}$. Namely, let $\on{U}_{M,N}^{\on{mid}}\subset\GL(N)$ be the
unipotent subgroup $\begin{pmatrix}U_{n-m-1}& {*} &{*}\\0&\CH_M& {*}\\0&0& U_{n-m-1}\end{pmatrix}$,
where $\CH_M\subset\GL(M+2)$ is the $2m+1$-dimensional commutative subgroup of strictly upper
triangular matrices (with all 1's at the diagonal) with arbitrary $m+1$ last entries in the first row,
arbitrary $m+1$ first entries in the last column, and 0's everywhere else.
Let also $\GL(M)\subset\GL(M+2)$ be the central block embedding (so the first and last row and column
vanish except for 1's at the diagonal), and let $B_M\subset\GL(M)\subset\GL(M+2)$ be the composed
embedding. We set $H^M_{N,\on{mid}}:=B_M\ltimes\on{U}_{M,N}^{\on{mid}}\subset\GL(N)$.
We have the additive character $\psi\colon H^M_{N,\on{mid}}\to\BG_a$ corresponding to the character
$\chi_{M,N}^{\on{mid}},\ (u_{ij})\mapsto\sum_{i=1}^{n-m-1}u_{i,i+1}+u_{n-m,n+m+1}+\sum_{i=n+m+1}^{N-1}u_{i,i+1}$,
of its Lie algebra. Then $\scrY_{M,N}^{\on{mid}}=\GL(N)/\on{U}_{M,N}^{\on{mid}}$, and
$\scrX_{M,N}\simeq T^*_\psi\scrY_{M,N}^{\on{mid}}$, cf.~\cite[\S3.2]{bft2}. Note that $\scrY_{M,N}^{\on{mid}}$ does
not carry an action of $\GL(M)$, but does carry an action of $B_M$ (commuting with the natural action
of $\GL(N)$). However, the isomorphism $\scrX_{M,N}\simeq T^*_\psi\scrY_{M,N}^{\on{mid}}$ is {\em not}
$B_M$-equivariant.

Since the irreducible components of $\Lambda_{\scrX_{M,N}}$ are the closures of twisted conormal bundles
of the relevant $B$-orbits in $\scrY_{M,N}^{\on{mid}}$, the set $\on{Irr}\Lambda_{\scrX_{M,N}}$ is in a natural
bijection with the set of relevant $B$-orbits in $\scrY_{M,N}^{\on{mid}}$, or equivalently, with the set of
relevant $H^M_{N,\on{mid}}$-orbits in $\CB_N$. Composing with the bijection $\theta_{M,N}$
of~Proposition~\ref{independence}, we obtain the same named bijection
$\theta_{M,N}$ from $\fM_{M,N}$ to the set of relevant $H^M_{N,\on{mid}}$-orbits in $\CB_N$.

\begin{rem}
  We list the representatives in $\GL(N)$ of the relevant double cosets
$H^M_{N,\on{mid}}\backslash\GL(N)/B_N$. We embed the $M\times N$-chessboard into the $N\times N$-chessboard
adding $n-m$ rows at the top and $n-m$ rows at the bottom. A maze $\fm\in\fM_{M,N}$ has $N-M=2n-2m$
walls of type~III. We extend the rightmost (resp.\ leftmost) $n-m$ type~III walls to the walls of
type~II (resp.\ type~I) in the $N\times N$-chessboard. In the resulting $N\times N$-maze we
place the rooks into all the boxes marked with x on the extended $(n-m)$-th type~III wall and to
the left of it. We also place the rooks into all the boxes marked with o on the extended $(n-m+1)$-th
type~III wall and to the right of it. The resulting rooks placement corresponds to a permutation
$w'$ of $\{1,\ldots,N\}$. We set $w=w_0^{(u)}w_0^{(d)}w'$, where $w_0^{(u)}$ (resp.\ $w_0^{(d)}$)
is the order-reversing permutation of $\{1,\ldots,n-m\}$ (resp.\ of $\{n+m+1,\ldots,N\}$).
Now we construct an auxiliary {\em hybrid} wall $W_h$ as follows. It coincides with the extended
$(n-m)$-th type~III wall below the middle line (dividing our $N\times N$-chessboard into the upper and
lower $n\times N$-chessboards), then follows the middle line till its intersection with the extended
$(n-m+1)$-th type~III wall, and then goes up along the latter wall. The desired representative
$E_\fm\in\GL(N)$ has 1's at the graph of $w$, and also in the $(n-m)$-th (resp.\ $(n+m+1)$-th) row in
all the columns containing the corners of $W_h$ above (resp.\ below) its intersection with the middle
line, including the rightmost (resp.\ leftmost) intersection with the middle line.
\end{rem}

\subsection{Fourier transform}
\label{fourier}
Recall that the set of $B^\vee$-orbits in $\Hom(\BC^N,\BC^M)$ is naturally indexed by $\fR_{M,N}$.
The set of $B^\vee$-orbits in $\Hom(\BC^M,\BC^N)$ is also naturally indexed by $\fR_{M,N}$.
Given $\frr\in \fR_{M,N}$ we denote by $\Lambda_\frr\subset\scrX_{M,N}^\vee$
(resp.\ $\Lambda'_\frr\subset\scrX_{M,N}^\vee$) the closure of the
conormal bundle to the corresponding $B^\vee$-orbit in $\Hom(\BC^N,\BC^M)$
(resp.\ in $\Hom(\BC^M,\BC^N)$).

\begin{lem}
  $\Lambda'_\frr=\Lambda_{\frr'}$, where $\frr'=\pi_{M,N}\iota\pi_{M,N}^{-1}(\frr)$ (notation
  of~Lemma~\ref{pi MN}).
\end{lem}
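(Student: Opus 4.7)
The plan is to compute the Fourier-transform bijection between the two natural parameterizations of $\on{Irr}\Lambda_{\scrX_{M,N}^\vee}$ explicitly, and then to verify combinatorially that it matches $\pi_{M,N}\iota\pi_{M,N}^{-1}$. First I would fix the rook-matrix representative $B_\frr\in\Hom(\BC^M,\BC^N)$ of the orbit $\BO'_\frr$ (with a $1$ at each position $(j_s,i_s)$ corresponding to a rook $(i_s,j_s)\in\frr$), and compute its conormal fiber inside $\Hom(\BC^N,\BC^M)$ directly. Writing out the conormality conditions that both $B_\frr A$ and $AB_\frr$ be strictly lower triangular, a short calculation shows that the fiber consists of matrices $A$ whose support lies in the complement $R(\frr)\subset\{1,\ldots,M\}\times\{1,\ldots,N\}$ of the union of L-shaped regions $\{(i_s,j):j\geq j_s\}\cup\{(i,j_s):i\leq i_s\}$ indexed by the rooks $(i_s,j_s)\in\frr$.

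Next I would determine the canonical $B^\vee$-rook form $\frr'$ of a generic $A$ supported on $R(\frr)$. Since $B^\vee$ acts by adding multiples of later rows to earlier rows and of earlier columns to later columns (together with row and column rescaling), the following greedy reduction produces the canonical rook matrix: scan rows from $i=M$ down to $i=1$; in each still-active row select the leftmost surviving nonzero entry as a pivot of $\frr'$; clear its column above using row operations and its row to the right using column operations; then retire that row and column. For generic $A$ supported on $R(\frr)$ the scan is well-defined and yields a non-attacking placement $\frr'\subset R(\frr)$ that depends only on the combinatorial shape of $R(\frr)$.

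The main obstacle will then be the combinatorial identification $\frr'=\pi_{M,N}\iota\pi_{M,N}^{-1}(\frr)$. Setting $\fm:=\pi_{M,N}^{-1}(\frr)$, so that the rooks of $\frr$ coincide with the x-boxes of $\fm$, I would argue by induction on the number of walls of $\fm$, peeling off the nested NW-envelopes $NW(X_1),NW(X_2),\ldots$ used to define $\pi_{M,N}^{-1}$ in the proof of~\lemref{pi MN}. Each wall of $\fm$ carries matching x- and o-corners at its SE and NW right-angle turns; the x-corners produce the L-shaped forbidden regions of the previous paragraph, while the o-corners sit on the ``other boundary'' of those L-shapes. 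The key point is that the bottom-up, left-to-right greedy scan reverses the top-down, right-to-left NW-envelope procedure, so that the leftmost pivots picked in bottom-up order coincide with the central-symmetric images of the o-corners of $\fm$. The combined effect of the x$\leftrightarrow$o swap and of central symmetry is exactly the involution $\iota$, yielding $\frr'=\pi_{M,N}(\iota\fm)$; a case-by-case check of the four wall types I, II, III, IV handles the walls that start or end on the chessboard boundaries.
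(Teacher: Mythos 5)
The paper's proof takes quite a different route. Rather than computing the generic $B^\vee$-orbit of a matrix in the conormal fibre (which requires a delicate Gaussian-elimination argument), the paper builds a \emph{single} distinguished point $(w_0C_\frr,{}^t\!D_\frr w_0)$ directly from the maze $\fm=\pi_{M,N}^{-1}(\frr)$: the first component uses the $\mathrm{x}$'s, the second the $\mathrm{o}$'s. The maze axioms (each row carries exactly one horizontal wall segment, each column exactly one vertical segment, $\mathrm{x}$'s are the SE corners, $\mathrm{o}$'s the NW corners) imply at once that ${}^t\!D_\frr C_\frr$ and $w_0C_\frr{}^t\!D_\frr w_0$ are strictly upper triangular, so this point lies in $\Lambda_{\scrX^\vee}$. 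The identity $C_{\frr'}=D_\frr$, $D_{\frr'}=C_\frr$ then makes the same point witness both polarizations. Finally the paper avoids ever having to \emph{identify} the generic orbit: it observes that both $\frr\mapsto\frr'$ and the Fourier-transform assignment are bijections of the finite poset $\fR_{M,N}$ compatible (in one direction) with orbit-closure order, and a monotone bijection of a finite poset is the identity. Your proposal replaces all of this with an explicit row reduction of a generic element of the conormal fibre.

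Two substantive issues with your plan as written. First, a bookkeeping error: with the conventions of the paper (upper-triangular Borels, $\mu^\svee(C,D)=(DC,CD)\in(\fb^\svee)^\perp$), the conormality conditions are that $B_\frr A$ and $AB_\frr$ be strictly \emph{upper} triangular, and this forces the support of $A$ to avoid the L-shape $\{(i_s,j):j\le j_s\}\cup\{(i,j_s):i\ge i_s\}$ at each rook $(i_s,j_s)$ -- the inequalities in your L-shapes are reversed. Second, and more seriously, step~3 -- the assertion that the bottom-up greedy scan of a generic matrix supported on $R(\frr)$ yields exactly the placement $\pi_{M,N}\iota\pi_{M,N}^{-1}(\frr)$ -- is the entire content of the lemma and is left as a one-sentence claim. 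The ``peel off nested NW-envelopes'' sketch is plausible as a program, but the induction is not set up (what is the inductive statement after one wall is removed? how do the row/column operations used to clear one pivot interact with the walls that remain?), and the outcome is quite sensitive to the conventions fixed in step~1. Verifying this bijection combinatorially is precisely what the paper's ``construct a canonical point $+$ monotone-bijection'' argument is designed to circumvent. Until step~3 is actually carried out, the proposal remains an outline of a genuinely different and more computational proof, not a proof.
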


\begin{proof}
A point of $\Lambda_\frr$ is obtained
as follows. We consider the maze $\pi_{M,N}^{-1}(\frr)$ and take the matrix
$C_{\frr}\in\Hom(\BC^N,\BC^M)$ with 1's at the positions marked by x's and 0's everywhere else.
We also take the matrix $D_{\frr}\in\Hom(\BC^N,\BC^M)$ with 1's at the positions marked by o's and
0's everywhere else. Then $(w_0C_{\frr},{}^t\!D_{\frr}w_0)\in\Lambda_\frr$
(here $w_0$ is the $M\times M$-matrix with 1's at the antidiagonal and 0's everywhere else,
and ${}^t\!D_{\frr}\in\Hom(\BC^M,\BC^N)$ is the transposed of $D_{\frr}$). Indeed, by the definition
of a maze, both compositions $w_0C_{\frr}{}^t\!D_{\frr}w_0$ and
${}^t\!D_{\frr}w_0w_0C_{\frr}={}^t\!D_{\frr}C_{\frr}$ are strictly upper-triangular. And
$\bmu^\svee(w_0C_{\frr},{}^t\!D_{\frr}w_0)=({}^t\!D_{\frr}C_{\frr},w_0C_{\frr}{}^t\!D_{\frr}w_0)$.

By definition of $\iota$, we have $C_{\frr'}=D_{\frr},\ D_{\frr'}=C_{\frr}$.
Thus our point $(w_0C_{\frr},{}^t\!D_{\frr}w_0)$ lies in $\Lambda_{\frr'}$ as well.
It follows that $\Lambda'_\frr=\Lambda_{\frr'}$. Indeed, let us define
$\frr''$ by the property  $\Lambda'_\frr=\Lambda_{\frr''}$. Then we have to
prove $\frr''=\frr'$. But both maps $\frr\mapsto\frr'$ and
$\frr\mapsto\frr''$ are bijections $\fR_{M,N}\iso \fR_{M,N}$. Hence
$\frr'\mapsto\frr''$ is an automorphism of the set of $B^\vee$-orbits in $\Hom(\BC^M,\BC^N)$.
Also, by definition, the $B^\vee$-orbit
in $\Hom(\BC^M,\BC^N)$ corresponding to $\frr'$ lies in the closure of the $B^\vee$-orbit
in $\Hom(\BC^M,\BC^N)$ corresponding to $\frr'$. It follows $\frr''=\frr'$.
The lemma is proved.
\end{proof}

\section{Orthosymplectic}
\label{osp}

\subsection{Setup}
\label{osp setup}
The action of $G_{2n,2n+1}:=\SO(2n)\times\SO(2n+1)$ (resp.\ of $G_{2n-1,2n}:=\SO(2n-1)\times\SO(2n)$)
on $\scrX^\iota_{2n,2n+1}:=T^*\SO(2n+1)$ (resp.\ on $\scrX^\iota_{2n-1,2n}:=T^*\SO(2n)$) is coisotropic.
More generally, let $e\in\fso(2n+1)$ be a nilpotent element of Jordan type $(2n+1-2m,1^{2m})$, and
let $\CS_e$ be a Slodowy slice to the nilpotent orbit $\BO_e\subset\fso(2n+1)$.
The centralizer of $e$ in $\SO(2n+1)$ contains $\SO(2m)$, and by the construction of~\cite[\S2]{l},
cf.~\cite[\S3.4]{bzsv}, the product $\scrX^\iota_{2m,2n+1}:=\SO(2n+1)\times\CS_e$ carries a symplectic
structure and a hamiltonian action of $G_{2m,2n+1}:=\SO(2m)\times\SO(2n+1)$. The action
$G_{2m,2n+1}\circlearrowright\scrX^\iota_{2m,2n+1}$ is coisotropic, see e.g.~\cite[\S2.2]{fu}.
It satisfies the conditions~\cite[\S3.5.1]{bzsv} of hypersphericity. Moreover,
$\scrX^\iota_{2m,2n+1}$ is polarizable, i.e.\ $\scrX^\iota_{2m,2n+1}$ is a twisted cotangent bundle of the
following spherical $G_{2m,2n+1}$-variety $\scrY^\iota_{2m,2n+1}$. Recall the unipotent subgroup
$\on{U}_{2m,2n+1}^{(r,s)}\subset\GL(N)$ of~\cite[Remark 2.4.1]{bfgt}, where $r=s=n-m$.
Let $\iota$ be the involution of $\GL(2n+1)$ preserving the upper triangular Borel subgroup,
such that the fixed point set is the orthogonal subgroup $\on{O}(2n+1)$ preserving the symmetric
bilinear form whose matrix consists of 1's at the antidiagonal, see~\S\ref{invol}. Then
$\scrY^\iota_{2m,2n+1}=\SO(2n+1)/(\on{U}_{2m,2n+1}^{(r,s)})^\iota$ (quotient modulo the fixed point set in the
unipotent subgroup).

Let $e\in\fso(2n)$ be a nilpotent element of Jordan type $(2n-1-2m,1^{2m+1})$, and
let $\CS_e$ be a Slodowy slice to the nilpotent orbit $\BO_e\subset\fso(2n)$.
The centralizer of $e$ in $\SO(2n)$ contains $\SO(2m+1)$, and by the construction of~\cite[\S2]{l},
cf.~\cite[\S3.4]{bzsv}, the product $\scrX^\iota_{2m+1,2n}:=\SO(2n)\times\CS_e$ carries a symplectic
structure and a hamiltonian action of $G_{2m+1,2n}:=\SO(2m+1)\times\SO(2n)$. The action
$G_{2m+1,2n}\circlearrowright\scrX^\iota_{2m+1,2n}$ is coisotropic, see e.g.~\cite[\S2.2]{fu}.
It satisfies the conditions~\cite[\S3.5.1]{bzsv} of hypersphericity. Moreover,
$\scrX^\iota_{2m+1,2n}$ is polarizable, i.e.\ $\scrX^\iota_{2m+1,2n}$ is a twisted cotangent bundle of the
following spherical $G_{2m+1,2n}$-variety $\scrY^\iota_{2m+1,2n}$. Recall the unipotent subgroup
$\on{U}_{2m+1,2n}^{(r,s)}\subset\GL(N)$ of~\cite[Remark 2.4.1]{bfgt}, where $r=s=n-m-1$.
Let $\iota$ be the involution of $\GL(2n)$ preserving the upper triangular Borel subgroup,
such that the fixed point set is the orthogonal subgroup $\on{O}(2n)$ preserving the symmetric
bilinear form whose matrix consists of 1's at the antidiagonal, see~\S\ref{invol}. Then
$\scrY^\iota_{2m+1,2n}=\SO(2n)/(\on{U}_{2m+1,2n}^{(r,s)})^\iota$ (quotient modulo the fixed point set in the
unipotent subgroup).

The recipe of~\cite[\S4]{bzsv} produces from $G_{2m,2n+1}\circlearrowright\scrX^\iota_{2m,2n+1}$
(resp.\ $G_{2m+1,2n}\circlearrowright\scrX^\iota_{2m+1,2n}$) the dual symplectic variety
$\scrX^{\iota\vee}_{2m,2n+1}$ (resp.\ $\scrX^{\iota\vee}_{2m+1,2n}$) with a hamiltonian coisotropic action
of $G_{2m,2n+1}^\vee=\SO(2m)\times\Sp(2n)$ (resp.\ $G_{2m+1,2n}^\vee=\Sp(2m)\times\SO(2n)$).
Namely, $\scrX^{\iota\vee}_{2m,2n+1}$ (resp.\ $\scrX^{\iota\vee}_{2m+1,2n}$) is a symplectic vector representation
of $G_{2m,2n+1}^\vee$ (resp.\ of $G_{2m+1,2n}^\vee$) equal to $\BC^{2m}_+\otimes\BC^{2n}_-$
(resp.\ $\BC^{2m}_-\otimes\BC^{2n}_+$).

Furthermore, the action of $G_{2n,2n}:=\Sp(2n)\times\Sp(2n)$ on
$\scrX^\iota_{2n,2n}:=(T^*\Sp(2n))\times\BC^{2n}$ is coisotropic. More generally, let $e\in\fsp(2n)$
be a nilpotent element of Jordan type $(2n-2m,1^{2m})$, and let $\CS_e$ be a Slodowy slice to the
nilpotent orbit $\BO_e\subset\fsp(2n)$. The centralizer of $e$ in $\Sp(2n)$ contains $\Sp(2m)$
and by the construction of~\cite[\S2]{l}, cf.~\cite[\S3.4]{bzsv}, the product
$\scrX^\iota_{2m,2n}:=\Sp(2n)\times\CS_e$ carries a symplectic structure and a hamiltonian action of
$G_{2m,2n}:=\Sp(2m)\times\Sp(2n)$. The action $G_{2m,2n}\circlearrowright\scrX^\iota_{2m,2n}$ is
coisotropic, see e.g.~\cite[\S2.2]{fu}. It satisfies the conditions~\cite[\S3.5.1]{bzsv}
of hypersphericity, but it is not polarizable, and the recipe of~\cite[\S4]{bzsv} is inapplicable due
to the nontrivial anomaly. Still it is expected that $\scrX^\iota_{2m,2n}$
has the dual symplectic variety $\scrX^{\iota\vee}_{2m,2n}$ (resp.\ $'\scrX^{\iota\vee}_{2m,2n}$) with a
hamiltonian coisotropic action of $G^\vee_{2m,2n}=\SO(2m+1)\times\Sp(2n)$ (resp.\
$'G^\vee_{2m,2n}=\Sp(2m)\times\SO(2n+1)$). Namely, $\scrX^{\iota\vee}_{2m,2n}$ (resp.\ $'\scrX^{\iota\vee}_{2m,2n}$)
is a symplectic vector representation of $G^\vee_{2m,2n}$ (resp.\ of $'G^\vee_{2m,2n}$) equal to
$\BC^{2m+1}_+\otimes\BC^{2n}_-$ (resp.\ $\BC^{2m}_-\otimes\BC^{2n+1}_+$).

By definition, our symplectic groups preserve the skew symmetric bilinear forms with matrices
vanishing off the antidiagonal, and 1's (resp.\ $-1$'s) in the upper right half (resp.\ lower left
half) of the antidiagonal. Thus we realize the symplectic groups as the fixed point sets of
involutions (also called $\iota$) of the ambient general linear groups preserving the upper
triangular Borel subgroups, see~\S\ref{invol}. We choose the upper triangular Borel subgroups in our
symplectic groups (as well as in our special orthogonal groups). Hence we obtain the subvarieties
$\Lambda_{\scrX_{2m+1,2n}^\iota},\Lambda_{'\!\scrX^{\iota\vee}_{2m,2n}}$, etc.

\begin{prop}
  \label{osp lagr}
  \textup{(a)} Both $\Lambda_{\scrX_{2m,2n+1}^\iota}$ and $\Lambda_{\scrX_{2m,2n+1}^{\iota\vee}}$ are Lagrangian
  subvarieties. The number of their irreducible components is equal to the number of centrally
  symmetric $2m\times2n$-mazes.

  \textup{(b)} Both $\Lambda_{\scrX_{2m+1,2n}^\iota}$ and $\Lambda_{\scrX_{2m+1,2n}^{\iota\vee}}$ are Lagrangian
  subvarieties. The number of their irreducible components is equal to the number of centrally
  symmetric $2m\times2n$-mazes.

  \textup{(c)} $\Lambda_{\scrX_{2m,2n}^\iota},\ \Lambda_{\scrX_{2m,2n}^{\iota\vee}}$ and
    $\Lambda_{'\!\scrX_{2m,2n}^{\iota\vee}}$ are Lagrangian subvarieties. The number of their irreducible
      components is equal to the number of centrally symmetric $2m\times2n$-mazes.
\end{prop}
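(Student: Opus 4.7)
The plan is to reduce all three parts to the $\GL$-case established in Section~\ref{glmn} via the involutions of~\S\ref{invol}, whose fixed-point subgroups are the orthogonal and symplectic groups appearing in the statement. The Lagrangian property of every $\Lambda$ in~Proposition~\ref{osp lagr} is automatic: on the equivariant-slice side from pseudosphericity (twisted cotangent bundle of $\scrY^\iota_{M,N}$, as in~\S\ref{osp setup}), and on the linear side from~Remark~\ref{super} applied to the basic classical Lie superalgebras $\osp(M|N)$. What remains is to count irreducible components, and in each case the count will be matched against $\sharp\fM^\iota_{2m,2n}$, with the parity equalities of~Lemma~\ref{symmetric mazes} absorbing the differences between~(a),(b),(c).

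First I would treat the equivariant-slice side. The irreducible components of $\Lambda_{\scrX^\iota_{M,N}}$ are indexed by relevant $B^\iota$-orbits on $\scrY^\iota_{M,N}$, equivalently by relevant orbits of a fixed-point unipotent-times-torus subgroup on the flag variety $\CB^\iota_N$ of $\GL(N)^\iota$. The key step is to establish a bijection between this set of orbits and the $\iota$-fixed relevant $H^M_{N,r,s,k}$-orbits on $\CB_N$, after arranging $(r,s,k)$ to be $\iota$-symmetric (which is harmless by~Proposition~\ref{independence}, and in the even cases uses the mid-realizations of~\S\ref{slice GL''} and~\S\ref{slice GL'''}). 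Once this is in place,~Corollary~\ref{iota slice} produces the composed bijection $\fM^\iota_{M,N}\iso (\on{Irr}\Lambda_{\scrX_{M,N}})^\iota \iso \on{Irr}\Lambda_{\scrX^\iota_{M,N}}$.

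For the linear side I realize $\scrX^{\iota\vee}_{M,N}$ as the $\iota$-fixed locus of $\scrX^\vee_{M,N}$ under the symplectic involution $(X,Y)\mapsto (\pm\,{}^t\!Y,\mp\,{}^t\!X)$, with signs read off from the relevant orthogonal or symplectic form. The analysis of~\S\ref{linear gl} then restricts cleanly: relevant $B^{\vee,\iota}$-orbits on the quotient $\scrQ^\iota_{M,N}$ are in bijection with the $\iota$-fixed relevant $B^\vee$-orbits on $\scrQ_{M,N}$, and the explicit representatives $(C_\fm,{}^t\!D_\fm)$ constructed in~Lemma~\ref{releva} are $\iota$-invariant exactly when $\fm\in\fM^\iota_{M,N}$. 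This delivers the bijection $\fM^\iota_{M,N}\iso\on{Irr}\Lambda_{\scrX^{\iota\vee}_{M,N}}$.

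The hardest point will be part~(c), the case $(M,N)=(2m,2n)$ of equal even parities. Here the anomaly blocks a direct application of the Ben-Zvi--Sakellaridis--Venkatesh duality machine and two candidate duals $\scrX^{\iota\vee}_{2m,2n}$ and $'\!\scrX^{\iota\vee}_{2m,2n}$ both arise; I would handle them in parallel, exploiting the symmetry that exchanges the roles of the orthogonal and symplectic factors in the two involutions. The main technical obstacle I anticipate throughout is the \emph{no-splitting} claim underlying the whole fixed-point reduction: each $\iota$-fixed irreducible component of $\Lambda_{\scrX_{M,N}}$ (or of $\Lambda_{\scrX^\vee_{M,N}}$) must contribute exactly one component to $\Lambda_{\scrX^\iota_{M,N}}$ (respectively $\Lambda_{\scrX^{\iota\vee}_{M,N}}$), with no extra Galois-cohomological contributions of the kind that can in principle appear when passing to $B^\iota$-orbits inside a single $\iota$-stable $B$-orbit. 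To dispose of this I would directly inspect the representatives $A^{r,s,k}_\fm$ of~\S\ref{slice GL'} (and the $E_\fm$ of~\S\ref{slice GL'''}) for $\iota$-fixed mazes $\fm\in\fM^\iota_{M,N}$, checking that, possibly after a harmless right $B_N$-twist, they already lie in $\GL(N)^\iota$, so that the orbits descend without splitting.
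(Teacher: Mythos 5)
Your high-level strategy is the same as the paper's: reduce both sides to the $\GL$-case of~\S\ref{glmn} via the involutions of~\S\ref{invol}, index everything by $\iota$-fixed mazes, and invoke~Lemma~\ref{symmetric mazes} to reconcile the parities. You have also correctly isolated the crux: the ``no-splitting'' statement, namely that each $\iota$-stable component of $\Lambda_{\scrX_{M,N}}$ (respectively of $\Lambda_{\scrX^\vee_{M,N}}$) contributes exactly one component over the fixed-point groups. Where you diverge from the paper is in how this is disposed of, and the plan you give is incomplete.

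Your proposal --- to check that the explicit representatives $A^{r,s,k}_\fm$, $E_\fm$ (respectively $(C_\fm,{}^t\!D_\fm)$) lie in the $\iota$-fixed subgroup after a harmless right $B_N$-twist --- establishes only that the $\iota$-fixed locus $\BO_\fm^\iota$ of each $\iota$-stable orbit is \emph{nonempty}. It does not by itself show that $\BO_\fm^\iota$ is a single orbit of the fixed subgroup, which is the actual content of the no-splitting claim: writing $\BO=H/\Stab$, the $H^\iota$-orbits on $\BO^\iota$ are governed by $\on{Ker}\bigl(H^1(\BZ/2,\Stab)\to H^1(\BZ/2,H)\bigr)$, and a fixed representative gives you a base point but says nothing about the size of this kernel. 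The paper's argument in~\S\ref{slice osp} addresses precisely this by computing $H^1(\BZ/2\BZ,H^M_{N,r,s,k})=\{\cdot\}$ (inducting through the unipotent radical down to $H^1(\BZ/2\BZ,\BG_m^M)=\{1\}$), and on the linear side in~\S4.2 by showing the $T^\vee$-orbit through $(C_\fm,{}^t\!D_\fm)$ is open in a linear subspace, so its $\iota$-fixed locus is open in $V_\fm^\iota$, hence connected and a single $T^{\iota\vee}$-orbit. Your sketch needs one of these connectedness/cohomology arguments; the representative calculation alone would not close the gap.

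A second point you did not address: in part~(b) the relevant group is $\SO(M)$ with $M$ odd, so $(B_M)^\iota$ has two connected components, and likewise $(\CB_N)^\iota$ is disconnected. The paper handles this by noting that $\BO_\fm^\iota$ itself splits into two connected pieces lying in the two components of $(\CB_N)^\iota$, which together form a single $(H^M_{N,\on{mid}})^\iota$-orbit; only one of them lives in the genuine $\SO(N)$ flag variety. Without this observation, a naive fixed-point count in part~(b) would be off. Your treatment of parts~(a) and~(c), and of the linear side, is otherwise aligned with the paper once the connectedness step is supplied.
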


The proof will be given in~\S\ref{slice osp}.

\subsection{Linear side}
In order to unburden the notation, within this particular subsection we denote by $\scrX^{\iota\vee}$ the
symplectic vector space equal to $\Hom(\BC^{2l}_-,\BC^k_+)$, where $\BC^{2l}_-$ (resp.\ $\BC^k_+$)
is equipped with the nondegenerate skew symmetric (resp.\ symmetric) bilinear form as
in~\S\ref{osp setup}. It is equipped with a hamiltonian action of $\Sp(2l)\times\SO(k)$.
The upper triangular Borel subgroup of $\GL(2l)\times\GL(k)$ is denoted $B^\vee$, and the
upper triangular Borel subgroup of $\Sp(2l)\times\SO(k)$ is denoted $B^{\iota\vee}$.

Note that $\scrX^{\iota\vee}$ is the fixed point set of the involution
$\iota$ of $\scrX^\vee:=T^*\Hom(\BC^{2l}_-,\BC^k_+)$ taking a pair
$(C\in\Hom(\BC^{2l}_-,\BC^k_+),\ D\in\Hom(\BC^k_+,\BC^{2l}_-))$ to $(D^*,-C^*)$ (adjoint operators).
Recall that in~\S\ref{linear gl} we identified $\scrX^\vee$ with $T^*\scrQ$ for a quotient vector
space $\scrX^\vee\twoheadrightarrow\scrQ$. The involution $\iota$ induces the same named involution
on $\scrQ$, and the fixed point set $\scrQ^\iota$ is identified with $\Hom(\BC^l_-,\BC^k_+)$,
where $\BC^l_-\subset\BC^{2l}_-$ is spanned by $v_1^-,\ldots,v_l^-$. The action of $B^\vee$ on $\scrQ$
restricts to the action of $B^{\iota\vee}$ on $\scrQ^\iota$. 

As in~Lemma~\ref{relev}, the irreducible components of $\Lambda_{\scrX^{\iota\vee}}$ are in natural
bijection with the relevant $B^{\iota\vee}$-orbits in $\scrQ^\iota$. The involution $\iota$ induces
the same named involution on the set of relevant $B^\vee$-orbits in $\scrQ$.

\begin{lem}
  Each $\iota$-invariant relevant $B^\vee$-orbit in $\scrQ$ contains exactly one relevant
  $B^{\iota\vee}$-orbit in $\scrQ^\iota$.
\end{lem}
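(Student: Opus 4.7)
The plan is to combine Lemma~\ref{releva} with a Lang--Steinberg type argument on the stabilizer. An $\iota$-invariant relevant $B^\vee$-orbit $\BO$ in $\scrQ$ corresponds under Lemma~\ref{releva} to a maze $\fm\in\fM_{M,N}$ (with $M=k$, $N=2l$) such that $\iota\fm=\fm$; that is, to a centrally symmetric maze. I would first verify by direct matrix computation that a suitable $B^\vee$-translate of the explicit representative $(C_\fm,{}^tD_\fm)$ constructed before Lemma~\ref{releva} is fixed by the induced involution $\iota\colon(A,B)\mapsto(B^*,-A^*)$ on $\scrQ$: the central symmetry of $\fm$ exchanges the x-marks in the left half of the chessboard with the o-marks in the right half, and the adjunction identifies the corresponding matrix entries in the two summands of $\scrQ$. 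This yields $\BO\cap\scrQ^\iota\ne\emptyset$, and the $B^{\iota\vee}$-orbit of such a fixed representative is the candidate for the unique orbit in the statement.

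For uniqueness, suppose $q_1,q_2\in\BO\cap\scrQ^\iota$ with $q_2=b q_1$ for some $b\in B^\vee$. Applying $\iota$ gives $q_2=\iota(b) q_1$, hence $c:=\iota(b)^{-1}b\in \on{Stab}_{B^\vee}(q_1)$, and this element satisfies the cocycle identity $c\cdot\iota(c)=1$. I would show that $c=\iota(s)s^{-1}$ for some $s\in\on{Stab}_{B^\vee}(q_1)$, in which case $bs\in B^\vee$ is $\iota$-fixed (thus lies in $B^{\iota\vee}$) and still sends $q_1$ to $q_2$. The required triviality of $H^1(\langle\iota\rangle,\on{Stab}_{B^\vee}(q_1))$ would be obtained via the $\iota$-stable Levi decomposition of this stabilizer into a subtorus and a unipotent radical (both apparent from the block-triangular form of the stabilizer of a maze representative), reducing to the standard vanishing for connected solvable groups with involution: the torus part by explicit computation on characters and cocharacters, the unipotent part by induction along the central series.

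Finally, relevance of the resulting $B^{\iota\vee}$-orbit is automatic: the pairing $\fq(b)=\langle\fx,b\fx-\fx\rangle$ from \S\ref{linear gl} defining relevance is $\iota$-compatible (both the symplectic form on $\scrX^\vee$ and the projection $\scrX^\vee\twoheadrightarrow\scrQ$ commute with $\iota$), so vanishing on $\on{Stab}_{B^\vee}(q)$ entails vanishing on $\on{Stab}_{B^{\iota\vee}}(q)=\on{Stab}_{B^\vee}(q)^\iota$. The main obstacle is the cohomological step in the uniqueness argument, since stabilizers need not be connected a priori; however, the explicit block-triangular form of the stabilizers of maze representatives makes the $\iota$-structure transparent, and if the general Lang--Steinberg reduction proves delicate, one can bypass it by a direct computation in coordinates adapted to the Lagrangian decomposition $\BC^{2l}=\BC^l\oplus\BC^l_*$ underlying the definition of $\scrQ$.
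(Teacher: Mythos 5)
You take a genuinely different route for the uniqueness step, and it is not airtight as written. The paper, after noting that $(C_\fm,{}^t\!D_\fm)$ is itself $\iota$-fixed, reduces to the diagonal torus $T^\vee$ and observes that $T^\vee\cdot p$ is open in a linear subspace $V_\fm\subset\scrQ$; therefore $(T^\vee\cdot p)^\iota$ is open in the vector space $V_\fm^\iota$, hence connected, hence a single $T^{\iota\vee}$-orbit. This geometric argument deliberately avoids any Galois cohomology. Your Lang--Steinberg argument via $H^1(\langle\iota\rangle,\on{Stab}_{B^\vee}(q_1))$ lies in the same circle of ideas, but you leave the crucial vanishing as ``explicit computation on characters and cocharacters,'' and this is exactly the content of the lemma: $H^1$ of a diagonalizable group with an involution is not automatically trivial (already $H^1(\BZ/2,\BG_m)$ with the trivial action has order two, and the stabilizer in $T^\vee$ need not be connected). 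There is a second gap: you conclude that $bs$ being $\iota$-fixed puts it in $B^{\iota\vee}$, but $(B^\vee)^\iota$ is strictly larger than $B^{\iota\vee}$ when $k$ is odd, since then $(B_k)^\iota$ has a second component containing the central element $-1_k\in\on{O}(k)\setminus\SO(k)$. This is repairable --- $-1_k$ acts on $\scrQ^\iota\cong\Hom(\BC^l_-,\BC^k_+)$ exactly as $-1_{2l}\in\Sp(2l)\cap B^{\iota\vee}$ does, so the extra component preserves $B^{\iota\vee}$-orbits --- but it must be said, since otherwise your conclusion only gives a single $(B^\vee)^\iota$-orbit.

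Two further remarks. Your caution in taking ``a suitable $B^\vee$-translate'' of $(C_\fm,{}^t\!D_\fm)$ is unnecessary: the paper checks that this representative is already $\iota$-invariant on the nose. And your final paragraph on relevance (the pairing vanishes on $\on{Stab}_{B^{\iota\vee}}(q)\subset\on{Stab}_{B^\vee}(q)$ because it already vanishes on the larger group) is correct and is in fact a point the paper's proof passes over in silence, so it is a worthwhile addition. To close the gaps, either carry out the $H^1$-computation honestly, including the component issue, or --- more efficiently --- replace the cohomological step with the paper's connectedness argument for the $T^\vee$-orbit.
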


\begin{proof}
Recall the construction of representatives of relevant $B^\vee$-orbits in~Lemma~\ref{releva}.
Given a centrally symmetric maze $\fm$ we construct a pair of matrices $(C_\fm,{}^t\!D_\fm)\in\scrQ$;
all their matrix elements are either~0 or~1. The point
$(C_\fm,{}^t\!D_\fm)\in\BO_\fm=B^\vee\cdot(C_\fm,{}^t\!D_\fm)$ is $\iota$-invariant, hence
$(\BO_\fm)^\iota$ is non-empty. It remains to prove that $(\BO_\fm)^\iota$ is a single $B^{\iota\vee}$-orbit.
Since $B^\vee$ (resp.\ $B^{\iota\vee}$) is a semidirect product of its diagonal torus $T^\vee$ (resp.\
$T^{\iota\vee}$) and the unipotent radical, it suffices to prove that $(T^\vee\cdot(C_\fm,{}^t\!D_\fm))^\iota$
is a single $T^{\iota\vee}$-orbit. Equivalently, it suffices to prove that
$(T^\vee\cdot(C_\fm,{}^t\!D_\fm))^\iota$ is connected. But $T^\vee\cdot(C_\fm,{}^t\!D_\fm)$
is open in a vector subspace $V_\fm\subset\scrQ$ (given by linear equations that certain matrix elements
vanish and open conditions that the remaining matrix elements are nonzero). Thus
$(T^\vee\cdot(C_\fm,{}^t\!D_\fm))^\iota$ is open in $V_\fm^\iota$, and hence connected.
\end{proof}

\begin{cor}
  \label{linear osp count}
  The cardinality of $\on{Irr}\Lambda_{\scrX^{\iota\vee}}$ equals the number of the centrally symmetric
  mazes $\sharp \fM^\iota_{2l,k}=\sharp \fM^\iota_{2l+1,k}$. In case $k$ is even, it also equals
  $\sharp \fM^\iota_{2l,k+1}$.
\end{cor}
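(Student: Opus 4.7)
The plan is to chain together three bijections. First, I would invoke Remark~\ref{super}, applied to the basic classical Lie superalgebra $\osp(k|2l)$ whose odd part is precisely $\scrX^{\iota\vee}$, to identify $\on{Irr}\Lambda_{\scrX^{\iota\vee}}$ with the set of relevant $B^{\iota\vee}$-orbits in $\scrQ^\iota$. Next, the preceding lemma identifies this set with the set of $\iota$-invariant relevant $B^\vee$-orbits in the ambient quotient $\scrQ$. Finally, Lemma~\ref{releva} identifies relevant $B^\vee$-orbits in $\scrQ=\scrQ_{k,2l}$ with mazes in $\fM_{k,2l}$.

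The technical core is to verify that under the maze/orbit correspondence of Lemma~\ref{releva}, the involution on orbits induced by $\iota\colon(C,D)\mapsto(D^*,-C^*)$ on $\scrX^\vee$ matches the combinatorial involution on mazes from Definition~\ref{maze}(b), i.e. central symmetry combined with the exchange of x's and o's. This should be essentially transparent from the explicit representatives $(C_\fm,{}^tD_\fm)$ constructed in the proof of Lemma~\ref{releva}: $C_\fm$ records the x-markings in one half of the chessboard, while $D_\fm$ records the o-markings in the other half, and the adjoint operation -- combined with the non-$B^\vee$-equivariant splitting $\scrX^\vee\simeq T^*\scrQ$ -- interchanges the two halves and the two marker types simultaneously. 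Granting this matching, the $\iota$-invariant orbits correspond exactly to elements of $\fM^\iota_{k,2l}$, and the tautological transposition of mazes yields $\sharp\fM^\iota_{k,2l}=\sharp\fM^\iota_{2l,k}$, giving the central equality $\sharp\on{Irr}\Lambda_{\scrX^{\iota\vee}}=\sharp\fM^\iota_{2l,k}$.

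The remaining equalities $\sharp\fM^\iota_{2l,k}=\sharp\fM^\iota_{2l+1,k}$ and $\sharp\fM^\iota_{2l,k}=\sharp\fM^\iota_{2l,k+1}$ (the latter under the hypothesis that $k$ is even) then follow at once from parts~(b) and~(a) of Lemma~\ref{symmetric mazes}, by setting $2m=2l$ and $2n=k$. The main obstacle is the involution-matching check in the second paragraph; once that combinatorial compatibility is in place, the counting identities -- and hence the corollary -- reduce to routine bookkeeping.
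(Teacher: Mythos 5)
Your proposal follows the paper's argument essentially verbatim: the chain (i) $\on{Irr}\Lambda_{\scrX^{\iota\vee}}\leftrightarrow$ relevant $B^{\iota\vee}$-orbits in $\scrQ^\iota$, (ii) the unnamed lemma immediately preceding the corollary to pass to $\iota$-invariant relevant $B^\vee$-orbits in $\scrQ$, (iii) Lemma~\ref{releva}(c) to pass to centrally symmetric mazes, and (iv) Lemma~\ref{symmetric mazes} is exactly the route the paper takes, except that the paper cites ``as in Lemma~\ref{relev}'' where you cite Remark~\ref{super} (both serve the same purpose). You are right that the one nontrivial step glossed over in the paper is the compatibility between the involution $\iota\colon(C,D)\mapsto(D^*,-C^*)$ descended to $\scrQ$ and the combinatorial involution $\iota$ of Definition~\ref{maze}(b); the preceding lemma's explicit representatives $(C_\fm,{}^t\!D_\fm)$ are what make this work, since for a centrally symmetric maze these representatives are manifestly $\iota$-fixed, and injectivity of $\fm\mapsto\BO_\fm$ then forces the two involutions to agree. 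Your observation that one needs the tautological transposition $\sharp\fM^\iota_{k,2l}=\sharp\fM^\iota_{2l,k}$ to match the indices to the corollary's statement is correct and is left implicit in the paper. One small caveat worth flagging: the step ``by setting $2m=2l$ and $2n=k$'' in Lemma~\ref{symmetric mazes}(b) requires $k$ to be even (otherwise $\fM^\iota_{2l+1,k}$ is empty, both indices being odd); you flagged the parity condition for part~(a) but not for part~(b). This imprecision is already present in the corollary's statement, so your proof inherits it rather than creating it, but it is worth noticing that the middle equality $\sharp\fM^\iota_{2l,k}=\sharp\fM^\iota_{2l+1,k}$ is only meaningful for even $k$, while the primary conclusion $\sharp\on{Irr}\Lambda_{\scrX^{\iota\vee}}=\sharp\fM^\iota_{2l,k}$ holds regardless.
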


\begin{proof}
Under the bijection of~Lemma~\ref{releva}(c), the $\iota$-invariant relevant $B^\vee$-orbits
in $\scrQ$ correspond to the centrally symmetric mazes. It remains to apply~Lemma~\ref{symmetric mazes}.
\end{proof}

\subsection{Equivariant slice}
\label{slice osp}
We are in a position to prove~Proposition~\ref{osp lagr}.

\subsubsection{(a)}
\label{(a)} Recall the setup of~\S\ref{slice GL'}. We set $N=2n+1,\ M=2m,\ r=s=n-m,\ k=n+1$.
We have to count the relevant $(H^M_{N,r,s,k})^\iota$-orbits in the $\SO(N)$ flag variety $\CB_N^\iota$.
Indeed, $H^M_{N,r,s,k}=B_M\ltimes\on{U}^{(r,s,k)}_{M,N}$, and
$(H^M_{N,r,s,k})^\iota=B_M^\iota\ltimes(\on{U}^{(r,s,k)}_{M,N})^\iota$, but $M$ is even, and the fixed
point set $B_M^\iota$ coincides with the upper triangular Borel subgroup in $\SO(M)$.
Any relevant $(H^M_{N,r,s,k})^\iota$-orbit is
contained in an $\iota$-invariant relevant $H^M_{N,r,s,k}$-orbit in $\CB_N$. The latter orbits are
indexed by the centrally symmetric $M\times N$-mazes by~Corollary~\ref{iota slice}.

We claim that any such $\iota$-invariant $H^M_{N,r,s,k}$-orbit $\BO_\fm$ in $\CB_N$ contains an
$\iota$-fixed point $p_\fm$. Indeed, $\iota$ gives rise to an action
$\BZ/2\BZ\circlearrowright H^M_{N,r,s,k}$, and it suffices to check $H^1(\BZ/2\BZ,H^M_{N,r,s,k})=\{\cdot\}$.
We have an exact sequence $1\to U\to H^M_{N,r,s,k}\to\BG_m^M\to1$ for a unipotent group $U$.
Since $H^1(\BZ/2\BZ,\BG_a)=0$, we conclude by induction that $H^1(\BZ/2\BZ,U)=\{\cdot\}$, and
it suffices to check that $H^1(\BZ/2\BZ,\BG_m^M)=\{1\}$. The action of the involution is
$\iota(x_1,\ldots,x_{2m})=(x_{2m}^{-1},\ldots,x_1^{-1})$, and the desired triviality of cohomology
follows immediately.

So given a centrally symmetric maze $\fm\in\fM_{M,N}^\iota$, the orbit
$\BO_\fm=H^M_{N,r,s,k}\cdot p_\fm\subset\CB_N$ is
equal to $H^M_{N,r,s,k}/\on{Stab}_{H^M_{N,r,s,k}}p_\fm$. Hence the fixed point set $\BO_\fm^\iota$ is a
single orbit of $(H^M_{N,r,s,k})^\iota$.

We conclude that the relevant $(H^M_{N,r,s,k})^\iota$-orbits in $\CB_N^\iota$ are indexed by the
set $\fM_{M,N}^\iota$ of centrally symmetric $M\times N$-mazes. It remains to
apply~Corollary~\ref{linear osp count}.

\subsubsection{(b)} We recall the setup of~\S\ref{slice GL''} and set $N=2n,\ M=2m+1,\ r=s=n-m-1$.
We have to count the relevant $H^{M\iota}_{N,\on{mid}}$-orbits in the $\SO(N)$ flag variety $\CB_N^\iota$.
Here $H^{M\iota}_{N,\on{mid}}\subset\SO(N)$ is the neutral connected component of the fixed point set
$(H^M_{N,\on{mid}})^\iota$.
Arguing as in~\S\ref{(a)}, for a centrally symmetric maze $\fm\in\fM_{M,N}^\iota$, we find an
$\iota$-fixed point $p_\fm$ in the corresponding $H^M_{N,\on{mid}}$-orbit $\BO_\fm$ in $\CB_N$.
To this end it suffices to check $H^1(\BZ/2\BZ,H^M_{N,\on{mid}})=\{\cdot\}$ that again boils down
to $H^1(\BZ/2\BZ,\BG_m^M)=\{1\}$. This time the action of involution is
$\iota(x_1,\ldots,x_{m+1},\ldots,x_{2m+1})=(x_{2m+1}^{-1},\ldots,x_{m+1}^{-1},\ldots,x_1^{-1})$, but again
it is straightforward to check the desired triviality of cohomology.

Now the argument of~\S\ref{(a)} runs into~2 problems in the present situation.
First, the fixed point set $(B_M)^\iota$ has~2 connected components: the neutral one is the upper
triangular Borel subgroup $B_M^\iota\subset\SO(M)$, and the other one is $-1_M\cdot B_M^\iota$.
Second, the fixed point set $(\CB_N)^\iota$ has~2 connected components: the neutral one
(the image of $\SO(N)\subset\on{O}(N)=\GL(N)^\iota$) is the flag variety $\CB_N^\iota$ of $\SO(N)$.
It follows that the fixed point set $\BO_\fm^\iota$ has two connected components: one is a single
$H^{M\iota}_{N,\on{mid}}$-orbit in $\CB_N^\iota$, the other one is a single $H^{M\iota}_{N,\on{mid}}$-orbit
in the other connected component of $(\CB_N)^\iota$, and together they form a single
$(H^M_{N,\on{mid}})^\iota$-orbit.

\subsubsection{(c)} In case $m<n$, we set $M=2m,\ N=2n$, and recall the setup of~\S\ref{slice GL'''}.
As in~\S\ref{(a)} we have to count the relevant $(H^M_{N,\on{mid}})^\iota$-orbits in the $\Sp(N)$ flag
variety $\CB_N^\iota$. The argument similar to the one in~\S\ref{(a)} implies that the relevant
$(H^M_{N,\on{mid}})^\iota$-orbits in the flag variety $\CB_N^\iota$ of $\Sp(N)$ are indexed by the
set $\fM_{M,N}^\iota$ of centrally symmetric $M\times N$-mazes. It remains to
apply~Corollary~\ref{linear osp count}.

In case $m=n$, we set $N=2n$. We set
$\BC^n=\BC v_1\oplus\ldots\oplus\BC v_n\subset\BC^N$. Then $\scrX^\iota_{2n,2n}$ is isomorphic to
the cotangent bundle $T^*\scrY^\iota_{2n,2n}$, where $\scrY^\iota_{2n,2n}=\Sp(2n)\times\BC^n$. Note that
the upper triangular Borel subgroup $B^\iota=B_N^\iota\times B_N^\iota\subset\Sp(2n)\times\Sp(2n)$ acts on
$\scrY^\iota_{2n,2n}$, but the isomorphism $\scrX^\iota_{2n,2n}\simeq T^*\scrY^\iota_{2n,2n}$ is {\em not}
$B^\iota$-equivariant. We have to count the relevant $B^\iota$-orbits in $\scrY^\iota_{2n,2n}$.
The argument similar to the one in~\S\ref{(a)}, together with~Corollary~\ref{symp mirab} implies
that these orbits are indexed by the centrally symmetric $N\times N$-mazes. It remains to
apply~Corollary~\ref{linear osp count}.


\begin{thebibliography}{XXXXXX}

\bibitem[BV]{bv} R.~Bezrukavnikov, K.~Vilonen, {\em Koszul duality for quasi-split real groups},
  Inventiones Mathematicae {\bf 226} (2021), 139--193.

\bibitem[BY]{by} R.~Bezrukavnikov, Z.~Yun, {\em On Koszul duality for Kac-Moody groups},
  Representation Theory {\bf 17} (2013), 1--98.

\bibitem[BFGT]{bfgt} A.~Braverman, M.~Finkelberg, V.~Ginzburg, R.~Travkin,
  {\em Mirabolic Satake equivalence and supergroups},
  Compos.\ Math.\ {\bf 157} (2021), no.~8, 1724--1765.


\bibitem[BFT]{bft2} A.~Braverman, M.~Finkelberg, R.~Travkin,
  {\em Orthosymplectic Satake equivalence, II}, \arxiv{2207.03115}.

\bibitem[BZSV]{bzsv}
  D.~Ben-Zvi, Y.~Sakellaridis and A.~Venkatesh, {\em Relative Langlands duality},
  \arxiv{2409.04677}.

\bibitem[CG]{cg} N.~Chriss, V.~Ginzburg, {\em Representation Theory and Complex Geometry},
  Birkh\"auser (1997).

\bibitem[CE]{ce} M.~Colarusso, S.~Evens, {\em $B_{n-1}$-bundles on the flag variety, II},
  Journal of Algebra {\bf 619} (2023), 249--297.
  
\bibitem[FU]{fu} M.~Finkelberg, I.~Ukraintsev, {\em Hyperspherical equivariant slices
  and basic classical Lie superalgebras}, \arxiv{2211.16007}.

\bibitem[GL]{gl} D.~Gaitsgory, S.~Lysenko,  {\em Parameters and duality for the metaplectic
  geometric Langlands theory}, Selecta Math.\ (N.S.) {\bf 24} (2018), no.~1, 227--301.

\bibitem[K]{k} N.~Kononenko, {\em Lagrangian subvarieties in Hyperspherical varieties related to
  $G_2$}, \arxiv{2404.19033}.  

\bibitem[L]{l} I.~Losev, {\em Symplectic slices for reductive groups},
  Sbornik Math.\ {\bf 197} (2006), no.~2, 213--224.

\bibitem[S]{s} N.~Spaltenstein, {\em Classes unipotentes et sous-groupes de Borel},
  Lect.\ Notes Math.\ {\bf 946} (1982).
  
\bibitem[T]{t} R.~Travkin, {\em Mirabolic Robinson-Shensted-Knuth correspondence},
  Selecta Math.\ (N.S.) {\bf 14} (2009), no.~3-4, 727--758.

\bibitem[TY]{ty1} R.~Travkin, R.~Yang, {\em Untwisted Gaiotto equivalence},
  Advances in Math.\ {\bf 435} (2023) 109359, \arxiv{2201.10462}.


\bibitem[V]{v} E.~B.~Vinberg, {\em Commutative homogeneous spaces and coisotropic symplectic actions},
  Russian Math.\ Surveys, {\bf 56} (2001), no.~1, 1--60.
  
 \end{thebibliography}
\end{document}